\numberwithin{equation}{section}
\numberwithin{figure}{section}
\theoremstyle{plain}
\newtheorem{thm}{\protect\theoremname}
  \theoremstyle{definition}
  \newtheorem{defn}[thm]{\protect\definitionname}
  \theoremstyle{remark}
  \newtheorem*{rem*}{\protect\remarkname}
  \theoremstyle{plain}
  \newtheorem{lem}[thm]{\protect\lemmaname}
  \theoremstyle{remark}
  \newtheorem{claim}[thm]{\protect\claimname}
  \theoremstyle{remark}
  \newtheorem*{acknowledgement*}{\protect\acknowledgementname}
  \providecommand{\acknowledgementname}{Acknowledgement}
  \providecommand{\claimname}{Claim}
  \providecommand{\definitionname}{Definition}
  \providecommand{\lemmaname}{Lemma}
  \providecommand{\remarkname}{Remark}
\providecommand{\theoremname}{Theorem}
\begin{document}
\global\long\def\defeq{\stackrel{\mathrm{{\scriptscriptstyle def}}}{=}}
\global\long\def\norm#1{\left\Vert #1\right\Vert }
\global\long\def\R{\mathbb{R}}
\global\long\def\Ent{\mathrm{Ent}}
 \global\long\def\Rn{\mathbb{R}^{n}}
\global\long\def\tr{\mathrm{Tr}}
\global\long\def\diag{\mathrm{diag}}
\global\long\def\cov{\mathrm{Cov}}
\global\long\def\E{\mathbb{E}}
\global\long\def\P{\mathbb{P}}
\global\long\def\Var{\mbox{Var}}
\global\long\def\vol{\mbox{vol}}
\global\long\def\rank{\text{rank}}
\global\long\def\lref#1{\text{Lem }\ltexref{#1}}
\global\long\def\lreff#1#2{\text{Lem }\ltexref{#1}.\ltexref{#1#2}}
\global\long\def\ltexref#1{\ref{lem:#1}}\global\long\def\ttag#1{\tag{#1}}
\global\long\def\cirt#1{\raisebox{.5pt}{\textcircled{\raisebox{-.9pt}{#1}}}}
\global\long\def\spe{\mathrm{op}}

\title{Stochastic Localization + Stieltjes Barrier = Tight Bound for Log-Sobolev}

\author{Yin Tat Lee\thanks{Microsoft Research and University of Washington, yile@microsoft.com and yintat@uw.edu},
Santosh S. Vempala\thanks{Georgia Tech, vempala@gatech.edu}}
\maketitle
\begin{abstract}
Logarithmic Sobolev inequalities are a powerful way to estimate the
rate of convergence of Markov chains and to derive concentration inequalities
on distributions. We prove that the log-Sobolev constant of any isotropic
logconcave density in $\R^{n}$ with support of diameter $D$ is $\Omega(1/D)$,
resolving a question posed by Frieze and Kannan in 1997. This is asymptotically
the best possible estimate and improves on the previous bound of $\Omega(1/D^{2})$
by Kannan-Lov\'{a}sz-Montenegro. It follows that for any isotropic logconcave
density, the ball walk with step size $\delta=\Theta(1/\sqrt{n})$
mixes in $O\left(n^{2}D\right)$ proper steps from \emph{any }starting
point. This improves on the previous best bound of $O(n^{2}D^{2})$
and is also asymptotically tight. The new bound leads to the following
refined large deviation inequality for an $L$-Lipschitz function $g$
over an isotropic logconcave density $p$: for any $t>0$, 
\[
\P_{x\sim p}\left(\left|g(x)-\bar{g}\right|\geq c\cdot L\cdot t\right)\leq\exp(-\frac{t^{2}}{t+\sqrt{n}})
\]
where $\bar{g}$ is the median or mean of $g$ for $x\sim p$; this
generalizes/improves on previous bounds by Paouris and by Guedon-Milman.
The technique also bounds the ``small ball'' probability in terms
of the Cheeger constant, and recovers the current best bound. Our
main proof is based on stochastic localization together with a Stieltjes-type
barrier function.
\end{abstract}

\section{Introduction}

This purpose of this paper is to understand the asymptotic behavior
of the log-Sobolev and log-Cheeger constants of convex bodies and
logconcave distributions in $\R^{n}$. These fundamental parameters,
which we will define presently, have many important connections and
applications (cf. \cite{Ledoux1999}). To introduce them, we first
remind the reader of the Cheeger constant (a.k.a. isoperimetric constant
or expansion).
\begin{defn}
For a density $p$ in $\R^{n}$, the Cheeger constant of $p$ is defined
as
\[
\psi_{p}\defeq\inf_{S\subseteq\R^{n}}\frac{\int_{\partial S}p(x)dx}{\min\left\{ \int_{S}p(s)dx,\int_{\R^{n}\setminus S}p(x)dx\right\} }.
\]
\end{defn}

Kannan, Lovász and Simonovits \cite{KLS95} conjectured that for any
logconcave density, the Cheeger constant satisfies\footnote{We write $\gtrsim$to denote ``at least a constant times''.}
$\psi_{p}\gtrsim\norm A_{\spe}^{-1/2}$ where $A$ is the covariance
matrix of $p$. A density/distribution is called \emph{isotropic }if
its covariance matrix is the identity, a normalization that can be
achieved via an affine transformation. For isotropic logconcave densities,
the conjecture says the Cheeger constant is $\Omega(1)$. The current
best estimate of $\psi_{p}$ is the following recent result.
\begin{thm}[\cite{LeeV17KLS}]
\label{thm:LeeV} For any logconcave density $p$ in $\R^{n}$ with
covariance matrix $A$,
\[
\psi_{p}\gtrsim\left(\tr\left(A^{2}\right)\right)^{-1/4}.
\]
\end{thm}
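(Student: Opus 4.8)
Since for logconcave densities the Cheeger constant and the Poincar\'e constant $C_{P}(p):=\sup_{f}\Var_{p}(f)/\int\norm{\nabla f}^{2}p\,dx$ satisfy $\psi_{p}\asymp C_{P}(p)^{-1/2}$, it is equivalent to prove $C_{P}(p)\lesssim(\tr(A^{2}))^{1/2}$, and after rescaling we may assume $\norm{A}_{\spe}=1$, so that $\tr(A^{2})\in[1,n]$. The plan is to run Eldan's stochastic localization: let $W_{t}$ be a standard Brownian motion in $\Rn$ and let $p_{t}$ evolve by $dp_{t}(x)=p_{t}(x)\,\langle x-a_{t},dW_{t}\rangle$ with $a_{t}:=\E_{p_{t}}[x]$, equivalently $p_{t}(x)\propto e^{\langle c_{t},x\rangle-t\norm{x}^{2}/2}\,p(x)$ for some process $c_{t}$. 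We use three standard facts: (i) $\E[p_{t}]=p$ for every $t$; (ii) $p_{t}$ is $t$-strongly logconcave, hence $C_{P}(p_{t})\le1/t$; and (iii) for $A_{t}:=\cov(p_{t})$, It\^o's formula gives $dA_{t}=dN_{t}-A_{t}^{2}\,dt$, where $N_{t}$ is a matrix-valued martingale whose quadratic variation is driven by the third moment tensor $T_{t}:=\E_{p_{t}}[(x-a_{t})^{\otimes 3}]$.

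\textbf{The reduction.} Fix a test function $f$ with $\int fp\,dx=0$ and set $m_{t}:=\int fp_{t}\,dx$; then $m_{t}$ is a martingale with $m_{0}=0$ and $dm_{t}=\langle\int(f-m_{t})(x-a_{t})p_{t}\,dx,\,dW_{t}\rangle$. Cauchy--Schwarz in each direction gives $\norm{\int(f-m_{t})(x-a_{t})p_{t}\,dx}^{2}\le\Var_{p_{t}}(f)\,\norm{A_{t}}_{\spe}\le C_{P}(p_{t})\,\norm{A_{t}}_{\spe}\int\norm{\nabla f}^{2}p_{t}\,dx$. Writing $\Var_{p}(f)=\int f^{2}p\,dx=\E[\Var_{p_{T}}(f)]+\E[m_{T}^{2}]$ via (i), bounding the first summand by (ii) and the second by integrating the previous inequality over $[0,T]$ and using $\E\int\norm{\nabla f}^{2}p_{t}\,dx=\int\norm{\nabla f}^{2}p\,dx$, we obtain, assuming for now that $\norm{A_{t}}_{\spe}$ stays bounded on $[0,T]$,
\[
C_{P}(p)\ \le\ \frac{1}{T}\ +\ T\cdot\sup_{t\le T}\bigl(C_{P}(p_{t})\,\norm{A_{t}}_{\spe}\bigr).
\]
The blow-up event is handled by stopping at $\tau:=\inf\{t:\norm{A_{t}}_{\spe}>2\}$ and checking that $\P[\tau<T]$ is small enough that the truncation error is negligible.

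\textbf{The crux: spectral control and closing the loop.} Everything now rests on keeping the spectrum of $A_{t}$ under control up to a time $T\asymp(\tr(A^{2}))^{-1/2}$. From $dA_{t}=dN_{t}-A_{t}^{2}\,dt$ one gets, e.g., $d\tr(A_{t}^{2})=(-2\tr(A_{t}^{3})+\norm{T_{t}}_{F}^{2})\,dt+(\text{martingale})$ (with $\norm{\cdot}_{F}$ the Frobenius norm), so one needs a bound on the third moment tensor of the logconcave measure $p_{t}$ -- and the point is to bound it in terms of the \emph{Poincar\'e constant of $p_{t}$ itself}, of the shape $\sup_{\norm{\theta}=1}\norm{\E_{p_{t}}[\langle\theta,x-a_{t}\rangle(x-a_{t})(x-a_{t})^{\top}]}_{\spe}\lesssim\norm{A_{t}}_{\spe}\sqrt{C_{P}(p_{t})\,\norm{A_{t}}_{\spe}}$, which is proved from logconcavity (moment comparison / a one-dimensional localization along $\theta$) together with $C_{P}(p_{t})\le1/t$. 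Feeding this into a matrix It\^o / supermartingale argument for $\tr(A_{t}^{q})$ with $q\asymp\log n$ (so that $\norm{A_{t}}_{\spe}\le(\tr(A_{t}^{q}))^{1/q}\le2\norm{A_{t}}_{\spe}$) keeps $\norm{A_{t}}_{\spe}\le2$ and $\tr(A_{t}^{2})\le2\tr(A^{2})$ throughout $[0,T]$ off an event of probability $o(1)$. To finish, close the loop: let $K^{*}:=\sup\{C_{P}(q)/(\tr(B^{2}))^{1/2}:q\text{ logconcave with covariance }B\}$, which is finite a priori by the elementary bound $\psi_{q}\gtrsim(\tr(B))^{-1/2}$ (equivalently $C_{P}(q)\lesssim\tr(B)\le\sqrt{n}\,(\tr(B^{2}))^{1/2}$), giving $K^{*}\lesssim\sqrt{n}$. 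In the recursion above one may then bound $C_{P}(p_{t})\,\norm{A_{t}}_{\spe}\le K^{*}\tr(A_{t}^{2})\le2K^{*}\tr(A^{2})$; choosing $T=c\,(\tr(A^{2}))^{-1/2}$ gives $C_{P}(p)\le(\tr(A^{2}))^{1/2}\bigl(\tfrac1c+2cK^{*}\bigr)$ for \emph{every} logconcave $p$, hence $K^{*}\le\tfrac1c+2cK^{*}$, so for any $c<\tfrac12$ we get $K^{*}\le\tfrac{1}{c(1-2c)}=O(1)$. Therefore $C_{P}(p)\lesssim(\tr(A^{2}))^{1/2}$, i.e.\ $\psi_{p}\gtrsim(\tr(A^{2}))^{-1/4}$.

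\textbf{Main obstacle.} The genuinely hard step is the spectral control together with the third moment estimate: the matrix It\^o bookkeeping for $\tr(A_{t}^{q})$ must be carried out with constants sharp enough to survive the fixed-point step and without leaking polynomial-in-$n$ factors; the third moment bound must be extracted from logconcavity in precisely the $C_{P}(p_{t})$-dependent form above (into which $K^{*}$ also feeds); and the optional-stopping argument must be set up so that the circular appearances of $C_{P}(p_{t})$ and of $K^{*}$ are legitimate. The reduction in the second paragraph, by contrast, is a routine consequence of the localization scheme.
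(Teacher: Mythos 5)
Theorem~\ref{thm:LeeV} is cited in this paper from \cite{LeeV17KLS} without proof; it is an external input, not something the paper establishes. (The nearest thing here is Lemma~\ref{lem:norm_At}, which \emph{assumes} a bound of the form $\psi_p\gtrsim(\tr A^k)^{-1/2k}$ as a hypothesis and is invoked with $k=2$, i.e.\ with Theorem~\ref{thm:LeeV} itself.) So there is no in-paper proof to compare against; I can only assess your sketch on its own.

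Your general scheme -- run Eldan's localization, bound $\Var_p(f)=\E[\Var_{p_T}(f)]+\E[m_T^2]$, control $\E[\Var_{p_T}(f)]$ by strong logconcavity of $p_T$ and control $\E[m_T^2]$ via the martingale's quadratic variation -- is a valid variant. It replaces the isoperimetric/\hspace{0pt}martingale-of-set-measure reduction that \cite{LeeV17KLS} uses with a direct Poincar\'e-constant reduction; that substitution is fine, and the identity $\Var_p(f)=\E[\Var_{p_T}(f)]+\E[m_T^2]$, the Cauchy--Schwarz step, and $\E\int\norm{\nabla f}^2\,dp_t=\int\norm{\nabla f}^2\,dp$ are all correct.

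The genuine gap is exactly where you flag it, but I want to make the concern sharper: the fixed-point step $K^*\le\frac1c+2cK^*$ is only legitimate if the spectral control $\norm{A_t}_{\spe}\le 2$ and $\tr(A_t^2)\le 2\tr(A^2)$ can be sustained up to a time $T=c(\tr A^2)^{-1/2}$ with $c$ \emph{independent of $K^*$}. But your only handle on the third moment is a bound whose quality degrades with $C_P(p_t)$, and your only bound on $C_P(p_t)$ is $K^*(\tr A_t^2)^{1/2}$ (the alternative $1/t$ blows up near $t=0$ and produces a non-integrable drift). If the supermartingale argument for $\tr(A_t^q)$ yields a viable time horizon only of order $(\tr A^2)^{-1/2}/K^{\alpha}$ for some $\alpha>0$, then the displayed inequality becomes $K^*\lesssim K^{*\alpha}+K^{*\,1-\alpha}$, which closes only for $\alpha\le\tfrac12$ and is vacuous for $\alpha\ge1$; whether $\alpha$ is small enough is exactly the content of the hard drift computation you defer. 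So as written the loop is not known to close. The proof in \cite{LeeV17KLS} is structured to avoid this: it controls the drift of the matrix potential using a third-moment estimate for logconcave measures (the analogue of Lemma~\ref{lem:norm_expectation} here) that does \emph{not} feed $C_P(p_t)$ back into itself, together with the unconditional a priori bound $\psi_p\gtrsim(\tr A)^{-1/2}$; this makes the spectral-control horizon $(\tr A^2)^{-1/2}$ unconditional rather than $K^*$-dependent, and removes the circularity. To repair your argument you would either need to prove the horizon is $K^*$-independent, or replace the $C_P$-dependent third moment bound with a universal one.
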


For isotropic $p$, this gives a bound of $\psi_{p}\gtrsim n^{-\frac{1}{4}}$.
The KLS hyperplane conjecture plays a central role in asymptotic convex
geometry, and implies several other well-known (older) conjectures,
including slicing (or hyperplane) and thin-shell (or variance) conjectures,
the Poincare conjecture, central limit, exponential concentration
etc. (see e.g., \cite{brazitikos2014geometry}). 

The KLS conjecture was motivated by the study of the convergence of
a Markov chain, the \emph{ball walk} in a convex body. To sample uniformly
from a convex body, the ball walk starts at some point in the body,
picks a random point in the ball of radius $\delta$ around the current
point and if the chosen point is in the body, it steps to the new
point. It can be generalized to sampling any logconcave density by
using a Metropolis filter. As shown in \cite{KLS97}, the ball walk
applied to a logconcave density mixes in $O^{*}(n^{2}/\psi_{p}^{2})$
steps from a warm start, which using the current-best bound \cite{LeeV17KLS}
is $O^{*}(n^{2.5})$. Looking closer, from a starting distribution
$Q_{o}$, the distance of the distribution obtained after $t$ steps
from $Q_{o}$ to the stationary distribution $Q$ drops as 
\[
d(Q_{t},Q)\le d(Q_{0},Q)\left(1-\frac{\phi^{2}}{2}\right)^{t}
\]
where $\phi$ is the conductance of the Markov chain and $d(.,.)$
is the $\chi$-squared distance. The conductance can be viewed as
the Cheeger constant of the Markov chain. Thus the number of steps
needed is $O(\phi^{-2}\log(1/d(Q_{0},Q)))$. Roughly speaking, for
the ball walk applied to a logconcave density $p$, the conductance
is $\Omega(\psi_{p}/n)$, leading to the bound of $O^{*}(n^{2}/\psi_{p}^{2})$
steps from a warm start. The dependence on the starting distribution
leads to an additional factor of $n$ in the running time when the
starting distribution is not warm (i.e., $d(Q_{1},Q)$ after one step
can be $e^{\tilde{\Omega}(n)}$ ). This is a general issue for Markov
chains. One way to address this is via the log-Sobolev constant \cite{diaconis1996,Ledoux1999}.
We first define it for a density, then for a Markov chain.
\begin{defn}
For a density $p$ the log-Sobolev constant $\rho_{p}$ is the largest
$\rho$ such that for every smooth function $f:\R^{n}\rightarrow\R$
with $\int f^{2}dp=1$, we have
\[
\rho\le\frac{2\int\norm{\nabla f}^{2}dp}{\int f^{2}\log f^{2}dp}.
\]
\end{defn}

A closely related parameter is the following.
\begin{defn}
The \emph{log-Cheeger }or log-isoperimetric constant $\kappa_{p}$
of a density $p$ in $\R^{n}$ is
\end{defn}

\[
\kappa_{p}=\inf_{S\subseteq\R^{n}}\frac{p(\partial S)}{\min\left\{ p(S),p(\R^{n}\setminus S)\right\} \sqrt{\log\left(\frac{1}{p(S)}\right)}}.
\]
It is known that $\rho_{p}=\Theta(\kappa_{p}^{2})$ (see e.g., \cite{ledoux1994simple}).
The log-Cheeger constant shows more explicitly that the log-Sobolev
constant is a uniform bound on the expansion ``at every scale''. 

For a reversible Markov chain with transition operator $P$ and stationary
density $Q$, the analogous definition is the infimum over all smooth
functions satisfying $f:\R^{n}\rightarrow\R$ with $\int f^{2}dp=1$
of 
\[
\rho(P)=\frac{\int_{x}\int_{y}\norm{f(x)-f(y)}^{2}P(x,y)dQ(x)}{\int f^{2}\log f^{2}dQ}.
\]
Diaconis and Saloff-Coste \cite{diaconis1996} show that the distribution
after $t$ steps satisfies $\Ent(Q_{t})\le e^{-4\rho(P)t}\Ent(Q_{0})$
where $\Ent(Q_{t})=\int Q_{t}\log\frac{Q_{t}}{Q_{0}}dQ_{0}$ is the
entropy with respect to the stationary distribution. Thus, the dependence
of the mixing time on the starting distribution goes down from $\log(1/d(Q_{0},Q))$
to $\log\log(1/d(Q_{0},Q)))$. Moreover, just as in the case of the
Cheeger constant, for the ball walk, the Markov chain parameter is
determined by the log-Sobelov constant $\rho_{p}$ for sampling from
the density $p$. It is thus natural to ask for the best possible
bound on $\rho_{p}$ or $\kappa_{p}$. Unlike the Cheeger constant,
which is conjectured to be at least a constant for isotropic logconcave
densities, it is known that $\rho_{p}$ cannot be bounded from below
by a universal constant, in particular for distributions that are
not ``$\psi_{2}$'' (distributions with sub-Gaussian tail).

Kannan, Lovász and Montenegro \cite{KannanLM06} gave the following
bound on $\kappa_{p}$. Our main result (Theorem \ref{thm:logsob})
is an improvement of this bound to the best possible. 
\begin{thm}[\cite{KannanLM06}]
For an isotropic logconcave density $K\subset\R^{n}$ with support
of diameter $D$, $we$ have $\kappa_{p}\gtrsim\frac{1}{D}$ and $\rho_{p}\gtrsim\frac{1}{D^{2}}$.
\end{thm}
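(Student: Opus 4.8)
\medskip
\noindent\textbf{Proof proposal.}
The plan is to deduce $\rho_{p}\gtrsim 1/D^{2}$ from $\kappa_{p}\gtrsim 1/D$ using the stated equivalence $\rho_{p}=\Theta(\kappa_{p}^{2})$, so it suffices to prove $\kappa_{p}\gtrsim 1/D$; note that isotropy will not be used, the argument applying to any logconcave density whose support has diameter $D$. Since, for a fixed geometric set, the quotient defining $\kappa_{p}$ is smaller when the smaller of $p(S),\,p(\R^{n}\setminus S)$ sits inside the logarithm (the term $p(\partial S)$ being symmetric under $S\leftrightarrow\R^{n}\setminus S$), it is equivalent to find a universal $c>0$ with
\[
p(\partial S)\;\ge\;\frac{c}{D}\,p(S)\sqrt{\log\tfrac{1}{p(S)}}\qquad\text{whenever }p(S)\le\tfrac12 .
\]

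First I would reduce this to one dimension by the Lov\'{a}sz--Simonovits localization lemma, exactly as in the classical proofs of Cheeger-type inequalities for logconcave measures. Pass to the equivalent three-set form: for every partition $\R^{n}=S_{1}\sqcup S_{2}\sqcup S_{3}$ with $d(S_{1},S_{2})\ge t$, show $p(S_{3})\ge\frac{ct}{D}\,m\sqrt{\log(1/m)}$ where $m=\min(p(S_{1}),p(S_{2}))$; the displayed inequality follows by taking $S_{3}$ to be the inner $t$-collar of $\partial S$, $S_{1}=\{x\in S:d(x,S^{c})\ge t\}$, $S_{2}=S^{c}$, and letting $t\to 0$. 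Because $a\mapsto a\sqrt{\log(1/a)}$ is not linear, one invokes the usual device of fixing the target value $a\in(0,\tfrac12]$: it suffices to prove $p(S_{3})\ge\frac{ct}{D}\,a\sqrt{\log(1/a)}$ whenever $\min(p(S_{1}),p(S_{2}))\ge a$, which is now a linear statement in $p$, so localization reduces it to the case where $p$ is a one-dimensional logconcave density supported on a segment inside $\overline{\mathrm{conv}\,\mathrm{supp}\,p}$ (hence of length $\le D$), with $S_{1},S_{2},S_{3}$ consecutive subintervals.

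The heart of the matter is then the one-dimensional estimate: if $\nu=e^{\phi}$ is a logconcave probability density on an interval $[\alpha,\beta]$ of length $L\le D$ ($\phi$ concave), then $\nu(s)\ge\frac{c}{L}F(s)\sqrt{\log(1/F(s))}$ whenever $F(s):=\nu([\alpha,s])\le\tfrac12$, and symmetrically with $1-F(s)$; this yields the needle log-Cheeger constant $\ge c/L\ge c/D$. I would split on whether $s$ lies before or after a mode $x^{*}$ of $\nu$. If $s>x^{*}$ then $\nu$ is nonincreasing on $[s,\beta]$, so $1-F(s)=\int_{s}^{\beta}\nu\le L\,\nu(s)$, hence $\nu(s)\ge(1-F(s))/L\ge 1/(2L)$, which already beats $\frac{c}{L}F(s)\sqrt{\log(1/F(s))}$ since $x\sqrt{\log(1/x)}\le\tfrac12\sqrt{\log 2}<\tfrac12$ on $(0,\tfrac12]$. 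If $s\le x^{*}$ then $\nu$ is nondecreasing on $[\alpha,s]$, and, writing $r=\nu(s)/F(s)$, I would extract from concavity of $\phi$: (i) $r\ge 1/(s-\alpha)\ge 1/L$, as $F(s)\le(s-\alpha)\nu(s)$; (ii) $\phi'(s^{-})\le r$, from the tangent bound $\nu(x)\le\nu(s)e^{-\phi'(s^{-})(s-x)}$ ($x\le s$, using $\phi'(s^{-})\ge 0$) integrated over $(-\infty,s]$; (iii) $\nu(s)\ge\tfrac{1}{2L}e^{-\phi'(s^{-})L}$, by bounding $1=\int_{\alpha}^{\beta}\nu$ above via $\nu\le\nu(s)$ on $[\alpha,s]$ and $\nu(x)\le\nu(s)e^{\phi'(s^{-})(x-s)}$ on $[s,\beta]$. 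Then (ii)--(iii) give $F(s)=\nu(s)/r\ge e^{-rL}/(2rL)$, so with $z:=rL\ge 1$ one has $\log(1/F(s))\le z+\log(2z)\le z^{2}/c^{2}$ for any $c\le(1+\log 2)^{-1/2}$ (an elementary inequality for $z\ge 1$), i.e.\ $\log(1/F(s))\le r^{2}L^{2}/c^{2}$, which rearranges exactly to $\nu(s)\ge\frac{c}{L}F(s)\sqrt{\log(1/F(s))}$; the right tail is handled by reflecting the interval.

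The main obstacle, in my view, is the localization bookkeeping — checking that the ``fix $a$'' device genuinely linearizes the inequality and that the $t\to 0$ collar limit recovers the boundary measure — since that is where the only delicate measure-theoretic points live; the one-dimensional estimate itself is short once one notices inequality (ii), which encodes the principle that a small left-mass $F(s)$ forces a correspondingly large logarithmic derivative of $\nu$ at $s$, overcompensating for the extra $\sqrt{\log(1/F(s))}$ factor. I would also expect the constant produced this way to be far from the truth: sharpening $1/D$ (resp.\ $1/D^{2}$) to the optimal $1/\sqrt{D}$ (resp.\ $1/D$) in the isotropic case is precisely what the stochastic-localization argument of the present paper is for, and appears out of reach of the one-shot localization used here.
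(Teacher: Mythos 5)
This theorem is quoted from \cite{KannanLM06} and is not proved in the paper; the paper's own contribution (Theorem \ref{thm:logsob}, proved via stochastic localization with a Stieltjes-type barrier) is precisely the improvement of this $1/D$, $1/D^{2}$ bound to the optimal $1/\sqrt{D}$, $1/D$. So there is no ``paper proof'' to compare against, and I am assessing your argument on its own terms.

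Your one-dimensional needle estimate is correct, and it is the genuine content of the argument. I checked all three sub-claims in the case $s\le x^{*}$: (i) $r=\nu(s)/F(s)\ge 1/L$ follows from monotonicity of $\nu$ on $[\alpha,s]$; (ii) $\phi'(s^{-})\le r$ follows from the tangent bound $\nu(x)\le\nu(s)e^{-\phi'(s^{-})(s-x)}$ integrated over $(-\infty,s]$ (using $\phi'(s^{-})\ge 0$ there); (iii) $\nu(s)\ge\frac{1}{2L}e^{-\phi'(s^{-})L}$ follows by bounding $\int_{\alpha}^{\beta}\nu$ from above by $2L\nu(s)e^{\phi'(s^{-})L}$. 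The elementary inequality $z+\log(2z)\le (1+\log 2)z^{2}$ for $z\ge 1$ is also correct (equality at $z=1$, and the right-hand derivative dominates). The case $s>x^{*}$ uses that $a\sqrt{\log(1/a)}$ is increasing on $(0,1/2]$ (critical point at $e^{-1/2}>1/2$), which is fine. One small thing worth spelling out: to pass from this pointwise (boundary) bound to the finite-$t$ three-set inequality on a needle, you should integrate the pointwise bound over $x\in[s,s']$, noting that $\min(F(x),1-F(x))\ge\min(\nu(T_1),\nu(T_2))$ and that $a\sqrt{\log(1/a)}$ is increasing on $(0,1/2]$; this step is needed because localization reduces the finite-$t$ inequality, not its $t\to 0$ limit.

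The one place where I would push back is the ``fix $a$'' linearization. After fixing $a$, the contrapositive has \emph{three} linear constraints on $p$ (namely $p(S_1)\ge a$, $p(S_2)\ge a$, $p(S_3)<f(a)$), whereas the Lov\'asz--Simonovits localization lemma in its standard form handles only two. This is not merely bookkeeping: if you localize keeping only $p(S_1)\ge a$ and $p(S_3)<f(a)$, the resulting needle may have $\nu(S_2)<a$, and then your 1D estimate gives no contradiction. One needs either the Fradelizi--Gu\'edon extension of localization to several constraints (yielding needles with a bounded number of affine pieces in the log-density), or a more careful reformulation that exploits $p(S_1)+p(S_2)+p(S_3)=1$ to collapse the system to two independent constraints. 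The result is of course true and the method is the one used in the literature, but as written this step is a genuine gap rather than routine, and I would flag it as such rather than as ``bookkeeping.''

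Finally, you are right that this route has no hope of giving the $1/\sqrt{D}$ log-Cheeger bound of Theorem \ref{thm:logsob}; the one-shot localization inherently loses a factor because it cannot exploit isotropy at small scales, and that is exactly the deficiency the stochastic localization argument of the paper is designed to remedy.
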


From the above bound, it follows that the ball walk mixes in $O\left(n^{2}D^{2}\right)$
\emph{proper} steps of step size $\delta$. A proper step is one where
the current point changes. For an isotropic logconcave density $\delta=\Theta(\frac{1}{\sqrt{n}})$
is small enough so that the number of wasted steps is of the same
order as the number of proper steps in expectation. Moreover, by restricting
to a ball of radius $D=O(\sqrt{n})$, the resulting distribution remains
near-isotropic and very close in total variation distance to the original.
Together, this considerations imply a bound of $O^{*}(n^{3})$ proper
steps from any starting point as shown in \cite{KannanLM06}. Is this
bound the best possible? From a warm start, the KLS conjecture implies
a bound of $O^{*}(n^{2})$ steps and current best bound is $O^{*}(n^{2.5})$.
Thus, the mixing of the ball walk, which was the primary motivation
for formulation of the KLS conjecture, also provides a compelling
reason to study the log-Sobolev constant. Estimating the log-Sobolev
constant was posed as an open problem by Frieze and Kannan \cite{frieze1999}
when they analyzed the log-Sobolev constant of the grid walk to sample
sufficiently smooth logconcave densities.

The Cheeger and log-Sobolev constants also play an important role
in the phenomenon known as concentration of measure. The following
result is due to Gromov and Milman. 
\begin{thm}[Lipschitz concentration \cite{GromovM83}]
 For any $L$-Lipschitz function $g$ in $\R^{n},$ and isotropic
logconcave density $p$, 
\[
\P_{x\sim p}\left(\left|g(x)-\E g\right|\ge c\cdot L\cdot t\right)\le e^{-t\psi_{p}}.
\]
\end{thm}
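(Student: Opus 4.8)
The plan is to follow the classical route from the Cheeger (isoperimetric) inequality to exponential concentration. By replacing $g$ with $g/L$ we may assume $L=1$. Let $m$ be a median of $g$ under $p$ and put $A=\{x:g(x)\le m\}$, so $p(A)\ge 1/2$; for $t\ge 0$ let $A_{t}=\{x:\mathrm{dist}(x,A)\le t\}$ be the closed $t$-neighbourhood. Since $g$ is $1$-Lipschitz, $A_{t}\subseteq\{x:g(x)\le m+t\}$, so it suffices to bound $u(t)\defeq p(\R^{n}\setminus A_{t})$ from above; note $u(0)\le 1/2$ and, because $A_{t}\supseteq A$, that $p(A_{t})\ge 1/2\ge u(t)$ for every $t$.

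The heart of the argument is a differential inequality for $u$. Since $(A_{t})_{h}\subseteq A_{t+h}$, the lower right derivative of $p(A_{t})$ is at least the Minkowski boundary measure $p^{+}(A_{t})\defeq\liminf_{h\to 0^{+}}\tfrac1h\,p\big((A_{t})_{h}\setminus A_{t}\big)$, while by the definition of the Cheeger constant --- using that $\psi_{p}$ controls the Minkowski-content form of the boundary measure, which follows from a standard approximation of $A_{t}$ by smooth sets --- we have $p^{+}(A_{t})\ge\psi_{p}\min\{p(A_{t}),u(t)\}=\psi_{p}\,u(t)$. Thus $-u$ has lower right derivative at least $\psi_{p}u$; since $u$ is non-increasing this yields the comparison $u(t)\le u(0)e^{-\psi_{p}t}\le\tfrac12 e^{-\psi_{p}t}$, i.e.\ $\P_{x\sim p}(g(x)\ge m+t)\le\tfrac12 e^{-\psi_{p}t}$. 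Applying the same argument to $-g$ (with $B=\{g\ge m\}$, also of mass $\ge 1/2$) gives $\P_{x\sim p}(g(x)\le m-t)\le\tfrac12 e^{-\psi_{p}t}$, and adding the two bounds,
\[
\P_{x\sim p}\big(|g(x)-m|\ge t\big)\le e^{-\psi_{p}t}\qquad(t\ge 0).
\]

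It remains to replace the median by the mean. Integrating the tail bound gives $|\E g-m|\le\E|g-m|=\int_{0}^{\infty}\P(|g-m|\ge s)\,ds\le 1/\psi_{p}$, so $\P(|g-\E g|\ge t)\le\P(|g-m|\ge t-1/\psi_{p})\le e\cdot e^{-\psi_{p}t}$ once $t\ge 1/\psi_{p}$; choosing the absolute constant $c$ in the statement sufficiently large then yields $\P(|g-\E g|\ge c\,t)\le e^{-\psi_{p}t}$ in that range, and in the complementary range $t\lesssim 1/\psi_{p}$ --- where the asserted bound is bounded below by an absolute constant --- it follows from the one-dimensional small-ball estimate $\P(|g-\E g|<s)\gtrsim\psi_{p}s$ for $s\lesssim\psi_{p}^{-1}$ (a consequence of the logconcavity of the law of $g$ together with the variance bound $\Var(g)\lesssim\psi_{p}^{-2}$, itself from Cheeger's inequality and the Poincar\'e inequality). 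Reinstating $L$ finishes the proof. (An alternative to the neighbourhood-growth argument is Gromov--Milman's original derivation via the Poincar\'e inequality and the Laplace transform, but since $\psi_{p}$ is literally the Cheeger constant, the isoperimetric argument is the most direct.)

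I expect the only real subtlety to be making the differential-inequality step rigorous: the set $A$ and the neighbourhoods $A_{t}$ need not have smooth boundaries, so one cannot simply write $\int_{\partial A_{t}}p$, and one must either phrase everything through Minkowski content and the coarea formula or first replace $g$ by a smooth $1$-Lipschitz approximation and $A_{t}$ by smooth superlevel sets of a mollified distance function before invoking the isoperimetric inequality. The Gr\"onwall-type comparison for Dini derivatives, the symmetrization, and the median-to-mean passage are then routine.
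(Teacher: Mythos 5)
The paper does not prove this theorem; it simply cites it (attributed to Gromov--Milman), so there is no in-paper argument to compare against. Your proposal is the standard and correct derivation of exponential concentration from a Cheeger-type isoperimetric inequality: pass to the median, run a neighbourhood-growth / Gr\"onwall argument against the set $A_t=\{x:\mathrm{dist}(x,A)\le t\}$ using that $\min\{p(A_t),1-p(A_t)\}=u(t)$ once $p(A)\ge 1/2$, symmetrize, and then shift from median to mean. You have also correctly flagged the two places that need care: first, the Cheeger constant in the paper is stated with $\int_{\partial S}p$, so one must interpret this via Minkowski content (or smooth the indicator/distance functions) before the differential inequality $D^{+}u\le -\psi_{p}u$ is legitimate; second, after the median-to-mean shift the inequality is only immediate for $t\gtrsim 1/\psi_{p}$, and the complementary range genuinely requires an anti-concentration input. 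Your invocation of the one-dimensional logconcavity of the law of $g$ together with $\Var(g)\lesssim\psi_{p}^{-2}$ (Poincar\'e from Cheeger) is the right tool there --- note only that the small-ball lower bound naturally comes out as $\P(|g-\E g|<s)\gtrsim s/\sigma$ with $\sigma=\sqrt{\Var g}$, which is at least as strong as $\gtrsim \psi_p s$ on the relevant range precisely because $\sigma\lesssim 1/\psi_p$; one should also cover $\sigma\lesssim s\lesssim 1/\psi_p$ by the trivial estimate $\P(|g-\E g|<s)\ge\P(|g-\E g|<2\sigma)\ge 3/4$, after which $1-e^{-\psi_p t}\le\min\{\psi_p t,\,1\}$ lets the free constant $c$ in the statement absorb everything. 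With those details filled in the argument is complete.
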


Using the best-known estimate of the Cheeger constant gives a bound
of $e^{-t/n^{1/4}}$ \cite{LeeV17KLS}. A different bound, independent
of the Cheeger constant, for the deviation in length of a random vector
was given in a celebrated paper by Paouris \cite{Paouris2006} and
improved by Guedon and Milman \cite{GuedonM11} (Paouris' result has
only the second term in the minimum below, and is sharp when $t\gtrsim\sqrt{n}$). 
\begin{thm}[\cite{GuedonM11,Paouris2006}]
 For any isotropic logconcave density $p$,
\[
\P_{x\sim p}\left(\left|\norm x-\sqrt{n}\right|\ge c\cdot t\right)\le e^{-\min\left\{ \frac{t^{3}}{n},t\right\} }.
\]
\end{thm}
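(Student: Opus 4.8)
The plan is to deduce this --- in a strengthened form --- from a sharp Lipschitz concentration inequality, which is exactly what stochastic localization together with a Stieltjes-type barrier is built to deliver: for isotropic logconcave $p$ and any $1$-Lipschitz $g:\Rn\to\R$,
\[
\P_{x\sim p}\bigl(|g(x)-\E g|\ge c_{0}s\bigr)\ \le\ \exp\!\Bigl(-\tfrac{s^{2}}{s+\sqrt n}\Bigr),\qquad s>0 .
\]
Since $\tfrac{s^{2}}{s+\sqrt n}\ge\tfrac12\min\{s,\,s^{2}/\sqrt n\}\ge\tfrac12\min\{s,\,s^{3}/n\}$ --- the last step because $s^{2}/\sqrt n\ge s^{3}/n$ exactly when $s\le\sqrt n$, while for $s>\sqrt n$ both quantities are $\Theta(s)$ --- applying this display to $g=\norm{\,\cdot\,}$ already implies (and improves) the stated bound, modulo two routine points: the display is centered at $\E\norm x$ rather than $\sqrt n$, and it is nonvacuous only when $s$ is not too small.

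For the centering, isotropy gives $\E\norm x^{2}=n$, hence $\E\norm x\le\sqrt n$, while $\sqrt n-\E\norm x\le\sqrt{\Var_{p}(\norm x)}\le 2/\psi_{p}=O(n^{1/4})$ by the Poincar\'e inequality and Theorem \ref{thm:LeeV}. Thus for $s\gtrsim n^{1/3}$ one has $|\E\norm x-\sqrt n|\le\tfrac{c_{0}}{2}s$, so $\{\,|\norm x-\sqrt n|\ge c\,s\,\}\subseteq\{\,|\norm x-\E\norm x|\ge c_{0}s\,\}$ for a suitable absolute $c$, and the display bounds this by $e^{-\min\{s^{3}/n,\,s\}}$. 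For $s\lesssim n^{1/3}$ the target $e^{-\min\{s^{3}/n,s\}}$ is within a constant factor of $1$, and matching it on the nose reduces to a standard small-ball lower bound on the one-dimensional density of $\norm x$ under a logconcave law --- the ``small ball'' estimate mentioned in the introduction. The lower tail $\P(\norm x\le\sqrt n-c\,s)$ is the mirror computation and is nonvacuous only for $s\le\sqrt n$, where again $s^{2}/\sqrt n\ge s^{3}/n$.

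It remains to prove the displayed inequality. Run Eldan's localization $p_{t}(x)\propto p(x)\exp(\langle c_{t},x\rangle-\tfrac t2\norm x^{2})$, with $c_{t}$ a drifted Brownian motion, so that $p_{t}$ is $t$-strongly logconcave and $u\mapsto\E_{p_{u}}g$ is a martingale started at $\E_{p}g$. Splitting at a stopping time $\tau$, $\P(|g-\E g|\ge s)$ is at most the probability that this martingale has moved by $s/2$ plus $\E\,\P_{p_{\tau}}(|g-\E_{p_{\tau}}g|\ge s/2)$; on $\{\tau\ge t_{0}\}$ the latter is $\le 2e^{-t_{0}s^{2}/8}$ by Gaussian concentration for strongly logconcave measures, and the martingale has quadratic variation $\int_{0}^{\tau}\norm{\cov_{p_{u}}(x,g)}^{2}\,du\le\int_{0}^{\tau}\norm{\cov(p_{u})}_{\spe}\,\Var_{p_{u}}(g)\,du$. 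Here the barrier enters: tracking $\tr((\cov(p_{u})-zI)^{-1})$ as in the proof of Theorem \ref{thm:LeeV} keeps $\cov(p_{u})\preceq O(1)\cdot I$ up to a $\tau$ of order $n^{-1/2}$, while Theorem \ref{thm:LeeV} applied to $p_{u}$ gives $\Var_{p_{u}}(g)\le 4/\psi_{p_{u}}^{2}=O(\sqrt n)$; so the quadratic variation is $O(1)$, the martingale term is $e^{-\Omega(s^{2})}$, and the whole bound is $e^{-\Omega(s^{2}/\sqrt n)}$, which is the display for $s\lesssim\sqrt n$. The linear tail for $s\gtrsim\sqrt n$ comes from a self-improving restriction: cutting $p$ at radius $R=\sqrt n+2s$ gives a near-isotropic logconcave measure of diameter $O(R)$ with log-Sobolev constant $\Omega(1/R)$ by Theorem \ref{thm:logsob}, and Herbst's argument together with the identity $\P_{p}(\norm x\ge R)=\P_{p}(\norm x\ge\sqrt n+2s)$ yields the recursion $f(s)\le e^{-\Omega(s^{2}/(\sqrt n+s))}+f(2s)$ for $f(s)=\P_{p}(\norm x\ge\sqrt n+s)$, whose iterate is dominated by its first term. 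The routine part of all this is the reductions of the previous paragraph; the real content --- and the main obstacle --- is this stochastic-localization step, and within it the quantitative claim that $\cov(p_{u})\preceq O(1)\cdot I$ on an interval of length $\Omega(n^{-1/2})$, which is precisely what the Stieltjes barrier buys, together with the bookkeeping (bad-event probabilities, the self-improving restriction) needed to splice the sub-Gaussian regime $s\lesssim\sqrt n$ and the exponential regime $s\gtrsim\sqrt n$ into one clean bound.
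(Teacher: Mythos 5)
The paper states this result as a citation to Paouris and Gu\'edon--Milman and does not prove it; the closest thing to a ``paper proof'' is its argument for the stronger Theorem \ref{thm:conc}, which implies the statement (the reduction from $e^{-t^2/(t+\sqrt n)}$ and from mean/median to $\sqrt n$ is the same one you carry out in your first two paragraphs). That argument is a level-set ODE: writing $\alpha_t=\log\frac{1}{p(E_t)}$ for $E_t=\{\norm x\ge\mathrm{med}+t\}$, the paper truncates $p$ to a ball of radius about $\mathrm{med}+t+O(\sqrt n)$, invokes the log-isoperimetric inequality (Theorem \ref{lem:boundAgivesKLS}) on the truncated measure to get $\frac{d\alpha_t}{dt}\gtrsim\sqrt{\alpha_t/(m+t)}$, and integrates this single ODE to obtain $\alpha_t\gtrsim t^2/(t+\sqrt n)$ in both regimes at once. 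Your route is genuinely different in how the same ingredients are assembled: for $s\lesssim\sqrt n$ you do not go through the isoperimetric theorem at all, but run the martingale $\E_{p_u}g$ in localization time, controlling its quadratic variation $\int_0^\tau\norm{\cov(p_u)}_{\spe}\Var_{p_u}(g)\,du=O(1)$ via Lemma \ref{lem:norm_At} (the barrier keeps $\cov(p_u)\preceq 2I$ on a window of length $\Theta(n^{-1/2})$) together with Theorem \ref{thm:LeeV} (Poincar\'e gives $\Var_{p_u}(g)=O(\sqrt n)$), then harvest $e^{-\Omega(t_0 s^2)}$ from the $t_0$-strong logconcavity of $p_\tau$; for $s\gtrsim\sqrt n$ you switch to a Herbst/self-improving truncation using Theorem \ref{thm:logsob}. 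This works: the paper's ODE elegantly unifies the two regimes in one integral, while your version makes visible that the sub-Gaussian regime needs only the barrier plus the current KLS bound and not the full log-Cheeger result, at the price of stitching two separate arguments. Two pieces of bookkeeping you should spell out: the bad event $\{\tau<t_0\}$ contributes $e^{-\Omega(\sqrt n)}$ (harmless precisely in the regime $s\lesssim\sqrt n$ where you use this argument, but you should say so), and the iteration $f(s)\le e^{-\Omega(s^2/(\sqrt n+s))}+f(2s)$ requires fixing the truncation radius and near-isotropy constants uniformly across dyadic scales, which in turn uses the already-established sub-Gaussian tail to show the truncation barely perturbs the covariance.
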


Our tight log-Sobolev bound will be useful in proving an improved
concentration inequality. 

\subsection{Results}

Our main theorem is the following.
\begin{thm}
\label{thm:logsob}For any isotropic logconcave density $p$ with
support of diameter $D$, the log-Cheeger constant satisfies $\kappa_{p}\gtrsim\frac{1}{\sqrt{D}}$
and the log-Sobolev constant satisfies $\rho_{p}\gtrsim\frac{1}{D}$. 
\end{thm}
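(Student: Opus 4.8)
The plan is to prove the log-Cheeger bound $\kappa_p \gtrsim 1/\sqrt{D}$ via stochastic localization, which automatically yields the log-Sobolev bound $\rho_p \gtrsim 1/D$ through the standard equivalence $\rho_p = \Theta(\kappa_p^2)$ quoted in the introduction. Recall that in the stochastic localization scheme one runs the measure-valued SDE $dp_t(x) = p_t(x)\,\langle x - a_t,\, dW_t\rangle$ (with $a_t$ the barycenter of $p_t$), which keeps each $p_t$ logconcave, preserves the expectation $\E\,p_t = p$, and tilts $p$ by a Gaussian factor $\exp(\langle c_t,x\rangle - \tfrac{t}{2}\|x\|^2)$, so that $p_t$ acquires curvature $t$ and its covariance $A_t$ shrinks. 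The key soft fact is the localization principle: a uniform lower bound on $\kappa_{p_t}$ (equivalently on the "at-every-scale" expansion) for all $t$ up to some horizon $T$, together with control of how much the measure can split before time $T$, implies a lower bound on $\kappa_p$. So the whole problem reduces to controlling the covariance $A_t$ — specifically $\|A_t\|_{\mathrm{op}}$ or $\tr(A_t^2)$ — along the process, since by Theorem \ref{thm:LeeV} the Cheeger constant of $p_t$ is at least $(\tr(A_t^2))^{-1/4}$, and for the log-Cheeger constant one needs this to hold "at every scale" which the added Gaussian curvature $t$ will supply.

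The heart of the argument is a \emph{potential/barrier} estimate bounding the growth of $A_t$. Writing the Itô dynamics of $A_t$, one gets $dA_t = (\text{martingale term involving third moments}) - A_t^2\, dt$, so the $-A_t^2\,dt$ drift is a contraction but the martingale term driven by the third-moment tensor of $p_t$ can inflate the eigenvalues. Following the Stieltjes-barrier idea, I would track a potential of the form $\Phi_t = \tr\bigl((u I - A_t)^{-1}\bigr)$ (or a power thereof), a Stieltjes transform of the spectrum of $A_t$ that blows up precisely when the top eigenvalue of $A_t$ approaches the barrier parameter $u$; the parameter $u$ would be chosen of order $D$ (since for a logconcave density supported in a set of diameter $D$ one trivially has $\|A_0\|_{\mathrm{op}} \lesssim D^2$, but we want the finer scale). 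Computing $d\Phi_t$ by Itô's formula, the drift from $-A_t^2\,dt$ is favorable, and one must show the second-order (quadratic-variation) term coming from the third moments is dominated; here one uses that for a logconcave measure the third moment tensor is controlled by the covariance — the Cheeger/thin-shell-type estimates — so that the martingale part of $d\Phi_t$ has quadratic variation bounded by something like $\|A_t\|_{\mathrm{op}}\Phi_t^2\,dt$ or better. Running this to time $T \asymp 1/D$ (or $T$ chosen so that $tD \asymp 1$, making the Gaussian curvature comparable to the reciprocal scale), a supermartingale/Grönwall argument keeps $\|A_t\|_{\mathrm{op}} = O(D)$ (rather than the naïve $O(D^2)$) with high probability over the localization, which is exactly the self-improvement needed.

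With $\|A_t\|_{\mathrm{op}} = O(D)$ controlled up to time $T \asymp 1/D$, each $p_t$ is a logconcave density that is $t$-strongly-logconcave with covariance of operator norm $O(D)$; such a measure has log-Cheeger constant $\gtrsim 1/\sqrt{D}$ — the strong logconcavity handles the small-probability (fine-scale) sets via a Bakry–Émery / Bobkov-type bound of order $\sqrt{t} \asymp 1/\sqrt{D}$, while Theorem \ref{thm:LeeV} (or its localization-friendly version) handles the coarse sets. Averaging this uniform bound back through the localization martingale (using that $\E\, p_T = p$ and that the total "mass movement" up to time $T\asymp 1/D$ is $O(1)$, so no set of measure bounded away from $0$ and $1$ under $p$ can have been split too unevenly) transfers the bound to $p$ itself: $\kappa_p \gtrsim 1/\sqrt{D}$. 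The main obstacle I anticipate is the quadratic-variation bound for $d\Phi_t$: one needs the third-moment contribution to be strictly dominated by the contracting $-A_t^2$ drift \emph{uniformly in the spectrum of} $A_t$, including when several eigenvalues are near the barrier simultaneously, and getting the constant right so that the process survives all the way to $T \asymp 1/D$ (not just $T \asymp 1/D^2$, which would only recover the KLM bound) is the delicate point — this is precisely where the matrix Stieltjes barrier, rather than a scalar potential like $\tr A_t$ or $\|A_t\|_{\mathrm{op}}$, is needed, since it degrades gracefully and lets one absorb the fluctuations of individual eigenvalues into the collective potential.
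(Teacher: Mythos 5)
Your outline has the right skeleton -- stochastic localization to impose a Gaussian factor of curvature $t$ on $p$, a Stieltjes barrier to control the covariance $A_t$, the $\sqrt{t}$ fine-scale isoperimetry from strong logconcavity, and a martingale transfer back to $p$ -- but two of the load-bearing steps are set up in a way that would not deliver the improvement over the Kannan--Lov\'asz--Montenegro bound, and they are precisely the places where the paper's new ideas live.

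First, your barrier is placed at the wrong scale. Since $p$ is isotropic, $A_0=I$, and the goal is to keep $\norm{A_t}_{\spe}$ bounded by a \emph{universal constant}, not by $O(D)$. The paper fixes $\Phi=2^{-k}n$ in $\tr((uI-A_t)^{-q})=\Phi$, which gives $u_0=3/2$, and Lemma \ref{lem:norm_At} then shows $\norm{A_t}_{\spe}\leq 2$ with high probability for all $t\lesssim n^{-1/2}$. This constant-order bound is not cosmetic: it feeds into the quadratic-variation estimate $d[g_t]_t\lesssim\norm{A_t}_{\spe}\, g_t^2\log^2(1/g_t)\,dt$ for $g_t=p_t(E)$ (Lemma \ref{lem:volume}), and with only $\norm{A_t}_{\spe}=O(D)$ the fluctuations of $g_t$ would be a factor $D$ larger, shrinking the usable time horizon and landing you roughly back at the old $1/D^2$ log-Sobolev estimate. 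The diameter $D$ enters the argument not through the barrier parameter but through the crude envelope $d[g_t]_t\leq D^2 g_t^2\,dt$ (bounded support bounds $\norm{x-\mu_t}$) and through the choice of stopping time $T$.

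Second, and more fundamentally, the transfer-back step is dispatched with a remark -- "no set of measure bounded away from $0$ and $1$ can have been split too unevenly" -- that is the opposite of what is needed. The log-Cheeger infimum is governed by \emph{arbitrarily small} sets, where the weight $\sqrt{\log(1/p(S))}$ is large, and the paper explicitly identifies the passage from constant-measure sets to all sets as the main new obstacle relative to earlier stochastic-localization proofs. What must be controlled is the martingale-type quantity $\E\bigl[p_T(E)\sqrt{\log(1/p_T(E))}\,\mathbf{1}_{p_T(E)\leq 1/2}\bigr]$, and Lemma \ref{lem:g_sqrt_g} shows it remains at least a constant fraction of $p(E)\sqrt{\log(1/p(E))}$ provided $T$ is chosen \emph{depending on the set size}, $T\leq c\max\bigl(D^{-2}\log\tfrac{1}{p(E)},\ (\log\tfrac{1}{p(E)}+D)^{-1}\bigr)$. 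This, together with $p(\partial E)=\E\,p_T(\partial E)$ and the Gaussian-tilt isoperimetric inequality $p_T(\partial E)\gtrsim\sqrt{T}\,p_T(E)\sqrt{\log(1/p_T(E))}$ of Theorem \ref{thm:Gaussian-iso}, yields Theorem \ref{lem:boundAgivesKLS} and hence $\kappa_p\gtrsim D^{-1/2}$. Without a quantitative evolution estimate for $g_t\sqrt{\log(1/g_t)}$ at small $g_0$, the argument proves a Cheeger bound but not the log-Cheeger bound.
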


As we show in Section \ref{sec:tight}, these bounds are the best
possible (Lemma \ref{lem:lower_bound_logsob}). The improved bound
has interesting consequences. The first is an improved concentration
of mass inequality for logconcave densities. In particular, this gives
an alternative proof of Paouris' (optimal) inequality \cite{Paouris2006}
for the large deviation case ($t\ge\sqrt{n})$.
\begin{thm}
\label{thm:conc}For any $L$-Lipschitz function $g$ in $\Rn$ and
any isotropic logconcave density $p$, we have that
\[
\P_{x\sim p}\left(\left|g(x)-\bar{g}(x)\right|\geq c\cdot L\cdot t\right)\leq\exp(-\frac{t^{2}}{t+\sqrt{n}})
\]
where $\bar{g}$ is the median or mean of $g(x)$ for $x\sim p$. 
\end{thm}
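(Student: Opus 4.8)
The plan is to deduce Theorem \ref{thm:conc} from the log-Cheeger bound of Theorem \ref{thm:logsob} via three moves: (i) turn the log-isoperimetric inequality into a sub-Gaussian tail bound by the coarea formula, which produces a bound whose variance proxy is proportional to the diameter of the support; (ii) since an isotropic logconcave density may have support of diameter as large as $\Theta(n)$, first restrict $p$ to a Euclidean ball $B(\mu,R)$ centered at the mean $\mu$, with $R\asymp\sqrt n+t$, so that the relevant diameter becomes $\asymp\sqrt n+t$ and the restricted density is still $O(1)$-isotropic, hence (after an affine rescaling) covered by Theorem \ref{thm:logsob}; (iii) absorb the mass discarded in step (ii) by a self-bounding argument, applying the very inequality being proved to the $1$-Lipschitz function $x\mapsto\norm{x-\mu}$. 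The shape $\exp(-t^2/(t+\sqrt n))$ is exactly $\exp(-t^2/R)$ with $R\asymp\sqrt n+t$: the ``Gaussian'' regime $t\lesssim\sqrt n$ has $R\asymp\sqrt n$, the ``exponential'' regime $t\gtrsim\sqrt n$ has $R\asymp t$. Throughout we may take $L=1$ (replace $g$ by $g/L$) and work with the median $\bar g$, passing to the mean at the end.

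For (i)--(ii), fix $R=C_3(s+\sqrt n)\ge C_3\sqrt n$ and let $q$ be $p$ restricted to $B(\mu,R)$: it is logconcave with support of diameter $\le 2R$, and since $\E_p\norm{x-\mu}^2=n$, Markov's inequality gives $p(B(\mu,R))\ge\tfrac12$ for $C_3$ large; combined with the universal bound on the fourth moment of $1$-dimensional logconcave marginals, a short computation yields $\tfrac14 I\preceq\cov(q)\preceq 2I$. Rescaling $q$ to isotropic position changes its diameter by only a constant factor, so Theorem \ref{thm:logsob} gives $\kappa_q\gtrsim R^{-1/2}$ (the log-Cheeger constant being affine-covariant up to constants). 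Applying the log-isoperimetric inequality to the superlevel sets $S_s=\{x:g(x)>\bar g_q+s\}$ and using $\norm{\nabla g}\le 1$, the coarea formula gives $-\tfrac{d}{ds}\,q(S_s)\ge q(\partial S_s)\ge\kappa_q\, q(S_s)\sqrt{\log(1/q(S_s))}$; the substitution $u(s)=\sqrt{\log(1/q(S_s))}$ linearizes this to $u'\ge\kappa_q/2$, whence $q(|g-\bar g_q|\ge s)\le 2\exp(-cs^2/R)$. Since $p(B(\mu,R))$ is close to $1$, the $p$- and $q$-medians of $g$ differ by $O(\sqrt R)$, so $\P_p(|g-\bar g_p|\ge s,\ x\in B(\mu,R))\le\P_q(|g-\bar g_p|\ge s)\le 2\exp(-c's^2/R)$ for $s\gtrsim\sqrt R$.

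For (iii), put $\Psi_n(s)=\sup\,\P_p(|g-\bar g_p|\ge s)$, the supremum over isotropic logconcave $p$ on $\Rn$ and $1$-Lipschitz $g$. Splitting over $B(\mu,R)$ and its complement, using the bound above, and using that $x\mapsto\norm{x-\mu}$ is $1$-Lipschitz with $p$-median at most $\sqrt{2n}$, one gets the \emph{unconditional} recursion, valid for $s\gtrsim n^{1/4}$ with $R=C_3(s+\sqrt n)$,
\[
\Psi_n(s)\ \le\ \Psi_n\!\big(\tfrac{C_3}{2}(s+\sqrt n)\big)\ +\ 2\exp\!\Big(-\tfrac{c\,s^2}{C_3(s+\sqrt n)}\Big).
\]
Iterating with $s_0=s$, $s_{k+1}=\tfrac{C_3}{2}(s_k+\sqrt n)$, the $s_k$ grow geometrically to infinity, so the leftover term tends to $0$ (for which the crude Borell estimate $\Psi_n(u)\lesssim e^{-cu/\sqrt n}$ suffices), and the geometric growth makes the resulting series dominated by its first term; this gives $\Psi_n(s)\lesssim\exp(-cs^2/(s+\sqrt n))$ for $s\gtrsim n^{1/4}$. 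Setting $s=c''t$ for a suitable universal $c''$ and absorbing constants into a larger choice of $c$ in the statement gives the theorem for the median (the regime $t\lesssim n^{1/4}$, where $e^{-t^2/(t+\sqrt n)}=\Theta(1)$, being handled separately by soft arguments, e.g. the trivial bound $\P\le 1$ together with the $t\asymp n^{1/4}$ case and monotonicity); the passage from the median to the mean is free since $|\E_p g-\bar g_p|\le\E_p|g-\bar g_p|\lesssim n^{1/4}$ by the bound just established.

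The step needing genuine care is (iii): one must organize the self-reference so that it is manifestly non-circular --- the displayed recursion is a direct consequence of the ball/complement decomposition and only the qualitative fact $\Psi_n(u)\to0$ (not the conclusion) is fed back --- and then verify that the geometric series sums to the asserted form with matching constants. This self-bounding applied to $g(x)=\norm{x-\mu}$ is exactly what lets the argument recover Paouris' large-deviation inequality for $t\ge\sqrt n$. Everything else is routine: the coarea ODE of step (i) is classical; the $O(1)$-isotropy of the restricted density follows from elementary second- and fourth-moment estimates (Markov, plus $\E[Y^4]\le C(\E[Y^2])^2$ for $1$-dimensional logconcave $Y$); rescaling and the median/mean comparison are standard.
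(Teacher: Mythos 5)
Your proof is correct and reaches the conclusion by a genuinely different route from the paper. Both arguments combine the log-Cheeger bound (Theorem \ref{thm:logsob}) with a restriction to a ball of radius $\asymp \sqrt{n}+t$, but the mechanics diverge from there. The paper works throughout with $\alpha_t=\log(1/p(E_t))$ in the \emph{original} measure $p$: it truncates to a ball $B_{t,\zeta}$ of radius $c_4(\sqrt{n}+\alpha_t)+\zeta$ whose size adapts to the current level $\alpha_t$ (so that the discarded mass $p(B_{t,\zeta}^c)\le p(E_t)/(100n)$ is automatically negligible relative to $p(E_t)$, with no bootstrap needed), averages over the shift $\zeta$ to kill the extra boundary term $p(\partial B_{t,\zeta})$ coming from $\partial(E_t\cap B_{t,\zeta})$, and then integrates the resulting ODE $d\alpha_t/dt \gtrsim \sqrt{\alpha_t/(\sqrt{n}+\alpha_t)}$ directly. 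You instead (a) fix a ball of radius $R\asymp\sqrt{n}+s$, (b) derive the clean sub-Gaussian tail $e^{-cs^2/R}$ for the restricted measure $q$ by the classical coarea/ODE calculation from $\kappa_q\gtrsim R^{-1/2}$, and (c) control the cap $p(B^c)$ by a self-bounding recursion $\Psi_n(s)\le\Psi_n\bigl(\tfrac{C_3}{2}(s+\sqrt n)\bigr)+2e^{-cs^2/R}$ whose iterates grow geometrically, with the old Gromov--Milman/KLS tail $\Psi_n(u)\lesssim e^{-cu/\sqrt n}$ supplying the (non-circular) termination condition. The paper's adaptive-radius choice folds everything into a single ODE but forces the $\zeta$-averaging step to dispose of the spherical boundary term; your fixed-radius scheme keeps the isoperimetric input fully black-boxed and never sees that boundary term at all — you only use the \emph{volume} of the discarded cap, not the surface measure of the sphere — at the cost of having to organize and verify the geometric recursion. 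The supporting facts you invoke (near-isotropy of the restriction by second/fourth-moment estimates, the $O(\sqrt R)$ shift between $p$- and $q$-medians, the median-to-mean passage at the end) all hold as claimed, and the iteration does sum to the stated form.
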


For the Euclidean norm, this gives 

\[
\P_{x\sim p}\left(\norm x\geq c\cdot t\cdot\sqrt{n}\right)\leq\exp(-\min\left\{ t,t^{2}\right\} \sqrt{n}).
\]
As mentioned earlier, the previous best bound was $\exp(-\min\left\{ t,t^{3}\right\} \sqrt{n})$
\cite{GuedonM11} for the Euclidean norm and $\exp(-\frac{t}{n^{1/4}})$
for a general Lipschitz function $g$ \cite{LeeV17KLS}. The new bound
can be viewed as an improvement and generalization of both. Also this
concentration result does not need bounded support for the density
$p$. 

Next we bound the small ball probability in terms of the Cheeger constant.
\begin{thm}
\label{thm:small-ball}Assume that for $k\geq1$, $\psi_{p}\gtrsim\left(\tr A^{k}\right)^{-1/2k}$
for all logconcave distribution $p$ with covariance $A$. Then, for
any isotropic logconcave distribution $p$ and any $0\leq\varepsilon\leq c_{1}$,
we have that
\[
\P_{x\sim p}\left(\norm x_{2}^{2}\leq\varepsilon n\right)\leq\varepsilon^{c_{2}k^{-1}n^{1-1/k}}
\]
for some universal constant $c_{1}$ and $c_{2}$.
\end{thm}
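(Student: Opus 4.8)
The plan is to reduce the statement to an estimate on the radial distribution function $\phi(r)\defeq\P_{x\sim p}(\norm x\le r)$ and then feed the hypothesized Cheeger bound into the conditional densities $p_{R}\defeq p|_{B_{R}}$, where $B_{R}$ denotes the origin-centered ball of radius $R$. Fix dyadic scales $R_{0}=\sqrt{\varepsilon n}<R_{1}<\dots<R_{J}$ with $R_{j}=2^{j}R_{0}$ and $R_{J}\asymp\sqrt n$, so that $J\asymp\log(1/\varepsilon)$ and, by telescoping conditional probabilities, $\P_{x\sim p}(\norm x_{2}^{2}\le\varepsilon n)=p(B_{R_{0}})=p(B_{R_{J}})\prod_{j=1}^{J}p_{R_{j}}(B_{R_{j-1}})\le\prod_{j=1}^{J}p_{R_{j}}(B_{R_{j}/2})$. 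It then suffices to bound each factor $p_{R_{j}}(B_{R_{j}/2})$, and the whole argument is driven by the isoperimetry of the restrictions $p_{R_{j}}$.

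The key geometric input is a control of $\cov(p_{R})$ for $R\lesssim\sqrt n$. First, $\tr\cov(p_{R})\le\E_{p_{R}}\norm x^{2}\le R^{2}$ trivially. Second -- and this is the heart of the matter -- conditioning an \emph{isotropic} logconcave density on a ball of radius $\ll\sqrt n$ keeps it ``balanced'', in the sense that $\norm{\cov(p_{R})}_{\spe}\lesssim R^{2}/n$; granting this, $\tr\big(\cov(p_{R})^{k}\big)\le\norm{\cov(p_{R})}_{\spe}^{k-1}\tr\cov(p_{R})\lesssim R^{2k}/n^{k-1}$, so the hypothesis gives $\psi_{p_{R}}\gtrsim R^{-1}n^{(k-1)/(2k)}$, a Cheeger constant that improves as $R$ shrinks. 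Moreover, since $p$ is isotropic and $R\lesssim\sqrt n$, the radial profile of $p$ is increasing below its mode $\asymp\sqrt n$, so the median radius of $p_{R}$ is $\gtrsim R$; applying the Cheeger inequality for $p_{R}$ to the sub-ball $B_{R/2}$ and integrating the resulting differential inequality $(\log p_{R}(B_{s}))'\ge\psi_{p_{R}}$ over $s\in[R/2,\,\mathrm{median}]$ yields $p_{R_{j}}(B_{R_{j}/2})\le\exp(-c\,\psi_{p_{R_{j}}}R_{j})\le\exp(-c'n^{(k-1)/(2k)})$. Multiplying over the $J\asymp\log(1/\varepsilon)$ scales already produces a bound of the shape $\varepsilon^{c\,n^{(k-1)/(2k)}}$.

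The gap between this and the claimed exponent $k^{-1}n^{1-1/k}$ (which for $k=2$ is Paouris' $\varepsilon^{c\sqrt n}$) is exactly the difference between exponential and Gaussian decay of the lower tail of $\norm x$ on the restricted measures. Morally, after the operator-norm bound the coordinates of $p_{R}$ are uncorrelated with variance of order $R^{2}/n$, and the Cheeger hypothesis certifies that $\norm x^{2}$ then concentrates from below like a sum of about $n^{1-1/k}$ \emph{independent} summands, so that one should have $p_{R}(B_{R/2})\le\exp(-c\,k^{-1}n^{1-1/k})$ per scale rather than only $\exp(-cn^{(k-1)/(2k)})$. I expect extracting this Gaussian-strength per-scale estimate from a one-sided Cheeger bound to be the main obstacle. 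Two routes I would pursue: (a) a Laplace-transform bootstrap, using that for the tilted logconcave density $\mu_{\lambda}\propto e^{-\lambda\norm x^{2}}p(x)$ one has $-(\log\E_{x\sim p}e^{-\lambda\norm x^{2}})'=\E_{\mu_{\lambda}}\norm x^{2}$ together with $\cov(\mu_{\lambda})\preceq(2\lambda)^{-1}I$ (Brascamp--Lieb, since $\nabla^{2}(-\log\mu_{\lambda})\succeq2\lambda I$), so the hypothesis lower-bounds $\E_{\mu_{\lambda}}\norm x^{2}$, and then Markov's inequality $\P(\norm x^{2}\le\varepsilon n)\le e^{\lambda\varepsilon n}\,\E e^{-\lambda\norm x^{2}}$ with an optimized $\lambda\asymp\varepsilon^{-1}n^{-1/k}$; or (b) running the stochastic-localization argument behind Theorem~\ref{thm:logsob} directly on the restricted measures $p_{R_{j}}$ to upgrade their Cheeger bounds to log-Sobolev-type bounds at the relevant scales. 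Either way, multiplying the sharp per-scale estimates over $\asymp\log(1/\varepsilon)$ dyadic scales gives $\P_{x\sim p}(\norm x_{2}^{2}\le\varepsilon n)\le\exp(-c_{2}k^{-1}n^{1-1/k}\log(1/\varepsilon))=\varepsilon^{c_{2}k^{-1}n^{1-1/k}}$; the restriction $\varepsilon\le c_{1}$ is used only to keep all scales comfortably below the mode of the radial profile and to make $n^{1-1/k}$ large enough for the concentration step to kick in.
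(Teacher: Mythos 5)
Your main argument, as you yourself acknowledge, stops short of the stated bound. The telescoping over dyadic scales together with a per-scale Cheeger estimate gives at best a factor $\exp\bigl(-c\,\psi_{p_{R_j}}R_j\bigr)\le\exp\bigl(-c'n^{(k-1)/(2k)}\bigr)$ per scale, and hence $\varepsilon^{c'n^{(k-1)/(2k)}}$ overall; the target exponent is $n^{1-1/k}=n^{(k-1)/k}$, which is the \emph{square} of yours. For $k=2$ you get $\varepsilon^{cn^{1/4}}$ instead of Paouris' $\varepsilon^{c\sqrt n}$. You flag this gap honestly, but the two routes you sketch to close it are not carried out, and route (a) has a real hole: a Cheeger/Poincar\'e inequality for $\mu_\lambda$ controls the \emph{variance} of $\|x\|^2$, not its mean, so the hypothesis does not by itself lower-bound $\E_{\mu_\lambda}\|x\|^2$ as you claim. (Brascamp--Lieb gives the upper bound $\cov(\mu_\lambda)\preceq(2\lambda)^{-1}I$, which points in the wrong direction.) In addition, the ``key geometric input'' $\|\cov(p_R)\|_{\spe}\lesssim R^2/n$ for $R\lesssim\sqrt n$ is asserted without proof; it is plausible for well-centered examples but I do not see an argument, and for isotropic densities whose mode sits at distance $\Theta(\sqrt n)$ from the origin the restriction $p|_{B_R}$ is not obviously ``balanced'' in the sense you need.

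The paper proves the theorem by a different and sharper mechanism, which is essentially your route (b) made precise, but without any telescoping over scales. It runs stochastic localization on $p$ for time $t\asymp k^{-1}n^{-1/k}$: Lemma~\ref{lem:norm_At} (this is exactly where the Cheeger hypothesis enters) shows that with constant probability $\norm{A_t}_{\spe}\le2$ and $\tr A_t\gtrsim n$ on this time window. At time $t$ the density $p_t$ has an explicit Gaussian factor $e^{-\frac t2\norm x^2}$, hence subgaussian constant $1/\sqrt t$, and one can invoke Paouris' small-ball theorem for subgaussian logconcave measures (Theorem~\ref{thm:smallpaouris}) to get $p_t(E)\le\varepsilon^{\Omega(t\,\norm{A_t}_{\spe}^{-1}\tr A_t)}=\varepsilon^{\Omega(nt)}$. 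A martingale/SDE estimate (via Lemma~\ref{lem:volume}) shows $p_t(E)\ge e^{-O(nt)}p_0(E)$ with constant probability, and combining these pins down $p_0(E)\le\varepsilon^{\Omega(nt)}=\varepsilon^{\Omega(k^{-1}n^{1-1/k})}$. The Gaussian factor produced by localization is what supplies the ``Gaussian-strength'' decay you correctly identify as missing from the pure Cheeger argument; without it, or without a worked-out substitute such as the Laplace-transform idea, your proof does not reach the claimed exponent.
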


\begin{rem*}
The current best KLS estimate gives $\P_{x\sim p}\left(\norm x_{2}^{2}\leq\varepsilon n\right)\leq\varepsilon^{c_{2}\sqrt{n}}$,
which recovers the current best small ball estimate, also due to Paouris
\cite{paouris2012small}.
\end{rem*}
As another consequence, we circle back to the analysis of the ball
walk to resolve the open problem posed by Frieze and Kannan \cite{frieze1999}.
\begin{thm}
The ball walk with step size $\delta=\Theta(1/\sqrt{n})$ applied
to an isotropic logconcave density in $\R^{n}$ with support of diameter
$D$ mixes in $O^{*}(n^{2}D)$ proper steps from any starting point
of positive density (or from any interior point of a convex body).
\end{thm}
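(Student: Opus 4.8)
The plan is to combine the newly established log-Sobolev bound $\rho_p \gtrsim 1/D$ (Theorem~\ref{thm:logsob}) with the standard Diaconis--Saloff-Coste entropy-decay machinery for Markov chains, exactly as in the Kannan--Lov\'asz--Montenegro analysis \cite{KannanLM06}, but with the improved constant. First I would reduce to a compactly supported density: given an arbitrary isotropic logconcave $p$ on $\R^n$, restrict to the ball $B$ of radius $D=\Theta(\sqrt n)$ around the mean. By standard concentration for logconcave measures (or by Theorem~\ref{thm:conc}), $p(\R^n\setminus B)$ is exponentially small, so the restricted-and-renormalized density $p'$ is within total variation $e^{-\Omega(\sqrt n)}$ of $p$ and remains near-isotropic; mixing for $p'$ therefore yields mixing for $p$ up to a negligible additive error. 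This is the step that lets us assume $D = O(\sqrt n)$ in the final bound, reproducing the $O^*(n^2\cdot D) = O^*(n^{2.5})$ count that the abstract advertises, while also covering the bounded-diameter case directly.

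Next I would set up the ball walk with Metropolis filter on $p'$ and recall the relation between the chain's log-Sobolev constant $\rho(P)$ and the density's log-Sobolev constant $\rho_{p'}$: for step size $\delta = \Theta(1/\sqrt n)$ one has $\rho(P) \gtrsim \delta^2 \rho_{p'}/n \gtrsim \rho_{p'}/n^2$, after accounting for the one-step ``conductance-type'' loss (the local conductance of the ball walk at step size $\Theta(1/\sqrt n)$ is bounded below by a constant on the bulk of $p'$, and the Metropolis rejection probability is controlled by logconcavity). Plugging in $\rho_{p'} \gtrsim 1/D$ gives $\rho(P) \gtrsim 1/(n^2 D)$. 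Then the Diaconis--Saloff-Coste inequality $\Ent(Q_t) \le e^{-4\rho(P) t}\Ent(Q_0)$ shows that after $t = O(n^2 D \cdot \log\log(1/d(Q_0,Q)) + n^2 D)$ steps the entropy, and hence (by Pinsker / Csisz\'ar--Kullback) the total variation distance, is below any target $\varepsilon$. Since from an arbitrary starting point of positive density (or an interior point of a convex body) the initial entropy is at most $e^{\tilde O(n)}$, the $\log\log$ factor is $\tilde O(1)$, absorbed into $O^*(\cdot)$; converting total steps to \emph{proper} steps costs only a constant factor because at $\delta = \Theta(1/\sqrt n)$ the expected number of wasted (rejected) steps between proper steps is $O(1)$. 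This yields $O^*(n^2 D)$ proper steps.

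The main obstacle is handling the \emph{non-warm} start cleanly: the classical conductance argument forces an extra factor of $n$ precisely because $d(Q_1,Q)$ can be $e^{\tilde\Omega(n)}$, and the whole point of routing through the log-Sobolev constant is to replace that polynomial blowup by a doubly-logarithmic one. So the delicate part is verifying that the log-Sobolev-based decay genuinely applies to the Metropolis ball walk with a bad start --- one must ensure the chain's entropy functional is finite after a single step (it is, since one ball-walk step from a point spreads mass over a set of positive measure), and that the comparison $\rho(P) \gtrsim \rho_{p'}/n^2$ holds globally rather than only on a high-probability ``bulk'' (the latter requires a standard truncation/coupling argument to discard the low-local-conductance boundary layer of width $O(\delta)$, contributing only an $e^{-\Omega(\sqrt n)}$ error). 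A secondary technical point is that the ball walk is not exactly reversible with respect to $p'$ at the boundary of $B$; this is dealt with by the usual device of either extending $p'$ slightly smoothly or absorbing the $O(\delta)$-boundary effects into the same truncation error. Once these are in place, the rest is bookkeeping with the constants already supplied by Theorem~\ref{thm:logsob} and the cited convergence inequalities.
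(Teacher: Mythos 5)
Your proposal takes essentially the same route as the paper: the paper treats this theorem as a direct corollary of the improved log-Sobolev bound (Theorem~\ref{thm:logsob}), plugged into the existing Kannan--Lov\'asz--Montenegro / Diaconis--Saloff-Coste machinery (truncate to a ball of radius $O(\sqrt{n})$ if needed, relate $\rho(P)$ to $\rho_p/n^2$ at step size $\delta=\Theta(1/\sqrt{n})$, use entropy decay to get a doubly-logarithmic dependence on the quality of the start, and account for rejected steps at the chosen $\delta$), and does not spell out a separate proof. One small slip: you write that ``the initial entropy is at most $e^{\tilde O(n)}$''; what is at most $e^{\tilde O(n)}$ is the $L^\infty$ (or $\chi^2$) ``warmness'' $\sup Q_0/Q$ after one step, so the initial entropy itself is at most $\tilde O(n)$, which is what makes the $\log$ of the initial entropy polylogarithmic and hence absorbable into $O^*(\cdot)$.
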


The choice of $\delta=\Theta\left(1/\sqrt{n}\right)$ is the best
possible for isotropic logconcave distributions (Lemma \ref{lem:lower_bound_ball}).
The bound on the number of steps improves on the previous best bound
of $O^{*}(n^{2}D^{2})$ proper steps for the mixing of the ball walk
from an arbitrary starting point \cite{KannanLM06} and as we show
in Section \ref{sec:tight}, $O(n^{2}D)$ is the best possible bound.
For sampling, we can restrict the density to a ball of radius $O(\sqrt{n})$
losing only a negligibly small measure, so the bound is $O(n^{2.5})$
from an arbitrary starting point, which matches the current best bound
from a warm start. The mixing time from a warm start for an isotropic
logconcave density is $O(n^{2}\psi_{p}^{2})$, or $O(n^{2})$ if the
KLS conjecture is true; but from an arbitrary start, our analysis
is essentially the best possible, independent of any further progress
on the conjecture! 

\section{Approach: Stochastic localization}

In this section, we describe the stochastic localization method introduced
by Eldan \cite{Eldan2013}, and, in particular, the variant of the
method used in \cite{LeeV17KLS}. The idea of the approach is to gradually
modify the density $p$ by making infinitesimal changes so that the
measure of a set and of its boundary is not changed by much, but the
density function itself accumulates a significant Gaussian component,
i.e., it looks like a Gaussian density function times a logconcave
function. For such densities, we can use standard localization (or
other methods) to prove that the log-Sobolev constant is large. While
this is the same high-level approach as in previous papers, several
new challenges arise. First, unlike previous applications, we cannot
simply work with subsets of measure $1/2$ or a constant, it is crucial
to consider arbitrarily small subsets. Second, as we will discuss
presently, we need a more refined potential function to understand
the evolution of the measure. 
\begin{defn}
\label{def:A}Given a logconcave distribution $p$, we define the
following stochastic differential equation:
\begin{equation}
c_{0}=0,\quad dc_{t}=dW_{t}+\mu_{t}dt,\label{eq:dBt}
\end{equation}
where the probability distribution $p_{t}$, the mean $\mu_{t}$ and
the covariance $A_{t}$ are defined by
\[
p_{t}(x)=\frac{e^{c_{t}^{T}x-\frac{t}{2}\norm x_{2}^{2}}p(x)}{\int_{\Rn}e^{c_{t}^{T}y-\frac{t}{2}\norm y_{2}^{2}}p(y)dy},\quad\mu_{t}=\E_{x\sim p_{t}}x,\quad A_{t}=\E_{x\sim p_{t}}(x-\mu_{t})(x-\mu_{t})^{T}.
\]
\end{defn}

We collect the properties of this stochastic localization that we
will use in this paper in the following Lemma.
\begin{lem}[\cite{LeeV17KLS}]
\label{lem:dp_dA}For any logconcave distribution $p$ with bounded
support, the stochastic process $p_{t}$ defined in Definition \ref{def:A}
exists and is unique. Also, $p_{t}$ is a martingale, and in particular,
for any $x\in\R^{n}$ 
\[
dp_{t}(x)=(x-\mu_{t})^{T}dW_{t}\cdot p_{t}(x).
\]
 Its covariance matrix satisfies
\begin{equation}
dA_{t}=\int_{\Rn}(x-\mu_{t})(x-\mu_{t})^{T}((x-\mu_{t})^{T}dW_{t})\cdot p_{t}(x)dx-A_{t}^{2}dt.\label{eq:dA}
\end{equation}
\end{lem}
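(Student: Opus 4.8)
The final statement is Lemma~\ref{lem:dp_dA}, which records the It\^o evolution of $p_t$ and $A_t$ under the stochastic localization. The plan is to compute $dp_t(x)$ directly from the definition $p_t(x) = e^{c_t^T x - \frac t2\norm x^2} p(x) / Z_t$ with $Z_t = \int e^{c_t^T y - \frac t2 \norm y^2} p(y)\,dy$, and then obtain $dA_t$ by applying It\^o's formula to $A_t = \int x x^T p_t(x)\,dx - \mu_t\mu_t^T$ (or directly to $\int (x-\mu_t)(x-\mu_t)^T p_t(x)\,dx$). Existence and uniqueness of the process on a bounded-support logconcave $p$ I would handle separately: because $p$ has bounded support, the drift $\mu_t$ and diffusion are globally bounded and smooth (real-analytic) in $c_t$, so standard SDE theory (e.g.\ Picard iteration with locally Lipschitz, linearly bounded coefficients) gives a unique strong solution; this is the routine part and I expect to cite it or dispatch it quickly.

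For the martingale property, first write $f_t(x) = c_t^T x - \frac t2\norm x^2$, so $df_t(x) = x^T dc_t - \frac12\norm x^2\,dt = x^T dW_t + (x^T\mu_t - \frac12\norm x^2)\,dt$ and $d\langle f(x)\rangle_t = \norm x^2\,dt$. By It\^o, $d(e^{f_t(x)}) = e^{f_t(x)}\bigl(df_t(x) + \frac12 d\langle f(x)\rangle_t\bigr) = e^{f_t(x)}\bigl(x^T dW_t + x^T\mu_t\,dt\bigr)$. One similarly computes $dZ_t$ by integrating this in $x$ against $p$: $dZ_t = Z_t\bigl(\mu_t^T dW_t + \norm{\mu_t}^2\,dt\bigr)$, and $d\langle Z\rangle_t = Z_t^2\norm{\mu_t}^2\,dt$. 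Then apply It\^o to the quotient $p_t(x) = e^{f_t(x)} p(x)/Z_t$: the drift terms must cancel (this is the key algebraic check), leaving $dp_t(x) = p_t(x)\bigl(x - \mu_t\bigr)^T dW_t$. In particular $\int dp_t(x)\,dx = 0$ pathwise up to the stochastic integral, confirming $p_t$ is a (measure-valued) martingale and each $p_t(x)$ is a local martingale; boundedness from the bounded support upgrades this to a true martingale.

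For the covariance evolution, write $A_t = M_t - \mu_t\mu_t^T$ where $M_t = \int x x^T p_t(x)\,dx$ and $\mu_t = \int x\, p_t(x)\,dx$. From $dp_t(x) = (x-\mu_t)^T dW_t\, p_t(x)$ we get $d\mu_t = \int x\,(x-\mu_t)^T dW_t\, p_t(x)\,dx = A_t\,dW_t$ (the quadratic-variation/It\^o correction here vanishes because $dp_t$ has no $dt$ part), and $dM_t = \int x x^T (x-\mu_t)^T dW_t\, p_t(x)\,dx$. Then $d(\mu_t\mu_t^T) = (d\mu_t)\mu_t^T + \mu_t (d\mu_t)^T + d\langle\mu,\mu^T\rangle_t$, and the cross-variation term is $d\langle\mu,\mu^T\rangle_t = A_t A_t^T\,dt = A_t^2\,dt$ since $A_t$ is symmetric. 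Subtracting, the martingale parts combine into $\int (x-\mu_t)(x-\mu_t)^T\bigl((x-\mu_t)^T dW_t\bigr) p_t(x)\,dx$ (after checking that the lower-order terms in the expansion of $x x^T - \mu_t\mu_t^T$ reorganize correctly into centered moments), and the only surviving $dt$ term is $-A_t^2\,dt$, giving \eqref{eq:dA}.

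The main obstacle is bookkeeping rather than conceptual: making sure every It\^o correction term is accounted for, in particular that $d\mu_t$ genuinely has no drift (so that It\^o on $M_t - \mu_t\mu_t^T$ contributes exactly $-A_t^2\,dt$ and nothing else), and justifying the interchange of $d$ (It\^o differential) with the $x$-integral — this needs the bounded support (or a truncation/dominated-convergence argument) so that all moments are finite and uniformly controlled on compact time intervals, and so that stochastic Fubini applies. Since this lemma is quoted from \cite{LeeV17KLS}, I would present the computation compactly and point to that reference for the full measure-theoretic justification of existence, uniqueness, and the Fubini interchange.
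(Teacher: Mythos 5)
This lemma is quoted from \cite{LeeV17KLS} and the present paper does not reprove it, so there is no in-paper proof to compare against; your reconstruction is the standard derivation and it is correct. The It\^o quotient computation for $p_t(x)=e^{f_t(x)}p(x)/Z_t$ is right: one gets $du=u(x^T dW_t+x^T\mu_t\,dt)$, $dZ_t=Z_t(\mu_t^T dW_t+\|\mu_t\|^2\,dt)$, and the four drift contributions in $d(u/Z_t)$ indeed cancel, leaving $dp_t(x)=p_t(x)(x-\mu_t)^T dW_t$. The covariance step is also correct: $d\mu_t=A_t\,dW_t$ (no It\^o correction since $\mu_t$ is linear in $p_t$ and $dp_t$ is driftless), $d[\mu,\mu^T]_t=A_tA_t^T\,dt=A_t^2\,dt$, and the martingale parts of $dM_t-(d\mu_t)\mu_t^T-\mu_t(d\mu_t)^T$ reassemble into $\int(x-\mu_t)(x-\mu_t)^T\bigl((x-\mu_t)^T dW_t\bigr)p_t(x)\,dx$ after using $\int(x-\mu_t)p_t(x)\,dx=0$. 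Existence/uniqueness via Lipschitz drift on bounded support and the stochastic-Fubini justification for interchanging $d$ with $\int_x$ are exactly the measure-theoretic points one defers to the cited reference, as you note.
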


In previous papers \cite{Eldan2013,LeeV17KLS}, the spectral norm
of the covariance the $\norm{A_{t}}_{\spe}$ is bounded via a potential
function of the form $\tr\left(A_{t}^{q}\right)$. However, to obtain
a tight result without extraneous logarithmic factors, we study the
Stieltjes-type potential $\tr\left((uI-A_{t})^{-q}\right)$. 

To define the potential, fix integers $n,q\geq1$ and a positive number
$\Phi>0$. Let $u(X)$ be the real-valued function on $n\times n$
symmetric matrices defined by the solution of the following equation
\begin{equation}
\tr((uI-X)^{-q})=\Phi\text{ and }X\preceq uI\label{eq:def_u}
\end{equation}
Note that this is the same as the solution to $\sum_{i=1}^{n}\frac{1}{(u-\lambda_{i})^{q}}=\Phi$
and $\lambda_{i}\leq u$ for all $i$ where $\lambda_{i}$ are the
eigenvalues of $X$. Similar potentials have been used to analyze
empirical covariance estimation \cite{srivastava2013covariance},
to build graph sparsifiers \cite{BSS12,allen2015spectral,lee2015constructing,lee2017sdp}
and to solve bandit problems \cite{audibert2013regret}.

The proof has the following ingredients: 
\begin{enumerate}
\item We show that for time $t$ up to $O(n^{-\frac{1}{2}})$, the spectral
norm of the covariance stays bounded (by a constant, say $2$) with
large probability (Lemma \ref{lem:norm_At}). This requires the use
of the Stieltjes-type potential function. 
\item Then we consider any measurable subset $S$, with $g_{0}=p_{0}(S)$
and analyze its measure at time $t$, i.e., $g_{t}=p_{t}(S)$. In
particular we show that up to time $\left(\log g_{0}+D\right)^{-1}$,
the expectation of $g_{t}\sqrt{\log(1/g_{t})}$ remains large, i.e.,
a constant factor times its initial value (Lemma \ref{lem:g_sqrt_g}). 
\item The density at time $t$ has a Gaussian component of variance $1/t$.
For such a distribution, the log-Cheeger constant is $\Omega(\sqrt{t})$
(Theorem \ref{thm:Gaussian-iso}).
\end{enumerate}
Together these facts will imply the main theorem. 

\section{Preliminaries}

In this section, we review some basic definitions and theorems that
we use in the proofs. 

\subsection{Stochastic calculus}

Given real-valued stochastic processes $x_{t}$ and $y_{t}$, the
quadratic variations $[x]_{t}$ and $[x,y]_{t}$ are real-valued stochastic
processes defined by
\[
[x]_{t}=\lim_{|P|\rightarrow0}\sum_{n=1}^{\infty}\left(x_{\tau_{n}}-x_{\tau_{n-1}}\right)^{2}\quad\text{and}\quad[x,y]_{t}=\lim_{|P|\rightarrow0}\sum_{n=1}^{\infty}\left(x_{\tau_{n}}-x_{\tau_{n-1}}\right)\left(y_{\tau_{n}}-y_{\tau_{n-1}}\right),
\]
where $P=\{0=\tau_{0}\leq\tau_{1}\leq\tau_{2}\leq\cdots\uparrow t\}$
is a stochastic partition of the non-negative real numbers, $|P|=\max_{n}\left(\tau_{n}-\tau_{n-1}\right)$
is called the \emph{mesh} of $P$ and the limit is defined using convergence
in probability. Note that $[x]_{t}$ is non-decreasing with $t$ and
$[x,y]_{t}$ can be defined as
\[
[x,y]_{t}=\frac{1}{4}\left([x+y]_{t}-[x-y]_{t}\right).
\]
If the processes $x_{t}$ and $y_{t}$ satisfy the SDEs $dx_{t}=\mu(x_{t})dt+\sigma(x_{t})dW_{t}$
and $dy_{t}=\nu(y_{t})dt+\eta(y_{t})dW_{t}$ where $W_{t}$ is a Wiener
process, we have that $[x]_{t}=\int_{0}^{t}\sigma^{2}(x_{s})ds$ and
$[x,y]_{t}=\int_{0}^{t}\sigma(x_{s})\eta(y_{s})ds$ and $d[x,y]_{t}=\sigma(x_{t})\eta(y_{t})dt$;
for a vector-valued SDE $dx_{t}=\mu(x_{t})dt+\Sigma(x_{t})dW_{t}$
and $dy_{t}=\nu(y_{t})dt+M(y_{t})dW_{t}$, we have that $[x^{i},x^{j}]_{t}=\int_{0}^{t}(\Sigma(x_{s})\Sigma^{T}(x_{s}))_{ij}ds$
and $d[x^{i},y^{j}]_{t}=(\Sigma(x_{t})M^{T}(y_{t}))_{ij}dt$.
\begin{lem}[Itô's formula]
\label{lem:Ito} Let $x$ be a semimartingale and $f$ be a twice
continuously differentiable function, then
\[
df(x_{t})=\sum_{i}\frac{df(x_{t})}{dx^{i}}dx^{i}+\frac{1}{2}\sum_{i,j}\frac{d^{2}f(x_{t})}{dx^{i}dx^{j}}d[x^{i},x^{j}]_{t}.
\]
\end{lem}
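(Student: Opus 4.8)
The plan is the classical one; since It\^{o}'s formula is a standard result of stochastic calculus I will only indicate the route rather than carry out the estimates. The idea is to Taylor-expand $f$ along a refining sequence of partitions of $[0,t]$, match the first-order terms with an It\^{o} integral and the second-order terms with an integral against the quadratic variation, and show every remaining term vanishes in probability. Because the statement is local, I would first reduce to the case in which $x_{t}$ and the auxiliary processes (the drift, the diffusion matrix, and $f$, $\nabla f$, $\nabla^{2}f$ evaluated along the path) are uniformly bounded, by stopping at $\tau_{M}=\inf\{t: |x_{t}|\geq M \text{ or } [x^{i},x^{j}]_{t}\geq M \text{ for some } i,j\}$, proving the identity for $x_{t\wedge\tau_{M}}$, and letting $M\to\infty$ (which works since $\tau_{M}\uparrow\infty$ a.s.). A further approximation---monotone classes, or polynomial approximation of $f$ and its first two derivatives on the compact range of the stopped path---lets one also assume $f$ has bounded derivatives up to order three.

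Now fix $t$ and a partition $P=\{0=\tau_{0}\leq\cdots\uparrow t\}$ of mesh $|P|\to 0$, write $\Delta x_{n}=x_{\tau_{n}}-x_{\tau_{n-1}}$, and telescope $f(x_{t})-f(x_{0})=\sum_{n}(f(x_{\tau_{n}})-f(x_{\tau_{n-1}}))$. A second-order Taylor expansion gives
\[
f(x_{\tau_{n}})-f(x_{\tau_{n-1}})=\sum_{i}\partial_{i}f(x_{\tau_{n-1}})\Delta x_{n}^{i}+\tfrac{1}{2}\sum_{i,j}\partial_{ij}f(x_{\tau_{n-1}})\Delta x_{n}^{i}\Delta x_{n}^{j}+R_{n},
\]
with $|R_{n}|\lesssim\omega(|\Delta x_{n}|)\,|\Delta x_{n}|^{2}$, where $\omega$ is a modulus of continuity of $\nabla^{2}f$ on the range of the path. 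The first sum converges in probability to $\int_{0}^{t}\sum_{i}\partial_{i}f(x_{s})\,dx_{s}^{i}$---this is essentially the definition of the It\^{o} integral as a limit of left-endpoint Riemann--Stieltjes sums, and it is exactly here that one must evaluate $\nabla f$ at the left endpoint $\tau_{n-1}$. For the remainder, $\sum_{n}|R_{n}|\lesssim(\sup_{n}\omega(|\Delta x_{n}|))\cdot\sum_{n}|\Delta x_{n}|^{2}$; the second factor is tight, converging to $\sum_{i}[x^{i}]_{t}$, while the first tends to $0$ in probability by continuity of the path as $|P|\to 0$, so $\sum_{n}R_{n}\to 0$.

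The crux is the second-order sum, where one must show that $\sum_{n}\partial_{ij}f(x_{\tau_{n-1}})\Delta x_{n}^{i}\Delta x_{n}^{j}\to\int_{0}^{t}\partial_{ij}f(x_{s})\,d[x^{i},x^{j}]_{s}$ in probability---i.e.\ that these products accumulate to the quadratic variation rather than vanishing, the ``$(dW_{t})^{2}=dt$'' phenomenon that distinguishes It\^{o} calculus from ordinary calculus. I would isolate the clean statement: for every bounded adapted $H$, $\sum_{n}H_{\tau_{n-1}}\Delta x_{n}^{i}\Delta x_{n}^{j}\to\int_{0}^{t}H_{s}\,d[x^{i},x^{j}]_{s}$. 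Reduce by polarization $[x^{i},x^{j}]=\tfrac{1}{4}([x^{i}+x^{j}]-[x^{i}-x^{j}])$ to the diagonal case; split $x^{i}=M^{i}+V^{i}$ into its local-martingale and finite-variation parts, noting that every term containing a $V^{i}$-increment is $O(|P|)$ relative to a bounded-variation quantity and hence negligible; and for the surviving term $\sum_{n}H_{\tau_{n-1}}(\Delta M_{n}^{i})^{2}$ write $(\Delta M_{n}^{i})^{2}=\Delta[M^{i}]_{n}+(\text{martingale increment})$ and bound the martingale part in $L^{2}$ using orthogonality of martingale increments, then pass from simple $H$ to general bounded adapted $H$ by approximation. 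Once this limit is in hand, combining the three pieces and removing the localization finishes the proof. The only genuine subtlety is keeping the modes of convergence consistent---everything is convergence in probability along the chosen sequence of partitions---which is why the $L^{2}$/Doob estimates for the martingale pieces come first and the pathwise estimates for the remainder and finite-variation pieces after.
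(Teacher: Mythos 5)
The paper states It\^{o}'s formula in its preliminaries as a standard fact and gives no proof, so there is nothing to compare against. Your sketch is the classical textbook proof (localization by stopping, telescoping Taylor expansion along partitions, identification of the first-order sum with the It\^{o} integral and of the second-order sum with the quadratic-variation integral, and control of the remainder by a modulus of continuity of $\nabla^{2}f$ against the tightness of $\sum_{n}|\Delta x_{n}|^{2}$), and it is essentially correct. The key reduction you isolate---$\sum_{n}H_{\tau_{n-1}}\Delta x_{n}^{i}\Delta x_{n}^{j}\to\int_{0}^{t}H_{s}\,d[x^{i},x^{j}]_{s}$ for bounded adapted $H$, handled by polarization, the martingale/finite-variation split, and $L^{2}$ orthogonality of martingale increments---is the standard crux, and your accounting of the modes of convergence (in probability along a fixed sequence of refining partitions, with $L^{2}$ used only on the martingale pieces) is the right way to organize it. One small point worth flagging: the partition in the paper's definition of quadratic variation is stochastic, so when you invoke convergence of $\sum_{n}|\Delta x_{n}|^{2}$ to $\sum_{i}[x^{i}]_{t}$ and of the left-endpoint sums to the It\^{o} integral, you should either restrict to deterministic partitions (which suffices) or note that the same limits hold for stopping-time partitions with vanishing mesh; this is routine but should be said if you want the proof to match the paper's conventions exactly. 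With that caveat, the proposal is a correct route to the result.
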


The next two lemmas are well-known facts about Wiener processes; first
the reflection principle.
\begin{lem}[Reflection principle]
\label{lem:reflection}Given a Wiener process $W(t)$ and $a,t\geq0$,
then we have that
\[
\P(\sup_{0\leq s\leq t}W(s)\geq a)=2\P(W(t)\geq a).
\]
\end{lem}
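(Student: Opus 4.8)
The plan is to derive the identity from the \emph{strong Markov property} of Brownian motion combined with the symmetry $W\overset{d}{=}-W$, via the classical path-reflection argument. First I would introduce the first passage time $\tau_{a}\defeq\inf\{s\ge0:W(s)=a\}$ (with the convention $\inf\emptyset=+\infty$). Since Brownian paths are almost surely continuous and adapted, $\tau_{a}$ is a stopping time, the event $\{\sup_{0\le s\le t}W(s)\ge a\}$ coincides with $\{\tau_{a}\le t\}$, and on this event $W(\tau_{a})=a$. A second consequence of path continuity, together with $W(0)=0\le a$, is the inclusion $\{W(t)\ge a\}\subseteq\{\tau_{a}\le t\}$; hence $\P(W(t)\ge a)=\P(W(t)\ge a,\ \tau_{a}\le t)$.

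Next I would condition on $\mathcal{F}_{\tau_{a}}$. By the strong Markov property, on $\{\tau_{a}<\infty\}$ the shifted process $u\mapsto\widetilde{W}_{u}\defeq W(\tau_{a}+u)-a$ is a standard Brownian motion independent of $\mathcal{F}_{\tau_{a}}$. On $\{\tau_{a}\le t\}$ we may write $W(t)=a+\widetilde{W}_{t-\tau_{a}}$, and since $\tau_{a}$ is $\mathcal{F}_{\tau_{a}}$-measurable while $\widetilde{W}$ is an independent Brownian motion, the conditional law of $W(t)-a$ given $\mathcal{F}_{\tau_{a}}$ is $N(0,\,t-\tau_{a})$, symmetric about $0$ (and $\P(\tau_{a}=t)=0$, since $\tau_{a}$ has a density). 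Therefore $\P(W(t)\ge a\mid\mathcal{F}_{\tau_{a}})=\tfrac12$ almost surely on $\{\tau_{a}\le t\}$. Taking expectations,
\begin{align*}
\P(W(t)\ge a) &= \P(W(t)\ge a,\ \tau_{a}\le t) = \E\!\left[\mathbf{1}_{\{\tau_{a}\le t\}}\,\tfrac12\right] \\
&= \tfrac12\,\P(\tau_{a}\le t) = \tfrac12\,\P\!\Big(\sup_{0\le s\le t}W(s)\ge a\Big),
\end{align*}
which rearranges to the claim. The boundary case $a=0$ is immediate since both sides equal $1$.

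The only genuinely delicate ingredient is the rigorous use of the strong Markov property at the random time $\tau_{a}$: one must verify that $\tau_{a}$ is a stopping time for the (augmented) Brownian filtration and that the post-$\tau_{a}$ increments form a Brownian motion independent of $\mathcal{F}_{\tau_{a}}$ with the stated Gaussian law; everything else follows from sample-path continuity and the reflection symmetry $W\overset{d}{=}-W$. Should one wish to bypass the strong Markov property, an alternative is to first establish the analogous identity for a simple symmetric random walk by a finite combinatorial reflection (reflecting each path about the level $a$ after its first visit to $a$) and then pass to the Brownian limit via Donsker's invariance principle; this is more elementary in its probabilistic input but requires extra care with the discretization, so the strong Markov proof above is the cleaner route.
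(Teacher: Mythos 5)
Your proof is correct; it is the standard strong-Markov/reflection argument one finds in any textbook treatment (M\"orters--Peres, Karatzas--Shreve, etc.), and all the steps you flag as delicate (that $\tau_a$ is a stopping time, that $\P(\tau_a=t)=0$, that the post-$\tau_a$ increments form an independent Brownian motion) are handled appropriately. Note, however, that the paper itself does not prove this lemma: it is stated explicitly as a ``well-known fact about Wiener processes'' and cited without argument, so there is no proof in the paper to compare against. Your write-up is a valid and, in fact, the canonical justification for the fact the authors take for granted.
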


Second, a decomposition lemma for continuous martingales.
\begin{thm}[Dambis, Dubins-Schwarz theorem]
\label{thm:Dubins}Every continuous local martingale $M_{t}$ is
of the form
\[
M_{t}=M_{0}+W_{[M]_{t}}\text{ for all }t\geq0
\]
where $W_{s}$ is a Wiener process. 
\end{thm}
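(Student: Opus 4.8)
The plan is to realize $M$ as Brownian motion run on the clock given by its own quadratic variation. Assume without loss of generality that $M_{0}=0$ and write $A_{t}=[M]_{t}$, which is a continuous, non-decreasing, adapted process with $A_{0}=0$. I would first treat the generic case $A_{\infty}=\infty$ a.s. Define the right-continuous time change $\tau_{s}=\inf\{t\geq0:A_{t}>s\}$, a non-decreasing family of stopping times that are all a.s. finite, and set $W_{s}:=M_{\tau_{s}}$, adapted to the filtration $\mathcal{G}_{s}:=\mathcal{F}_{\tau_{s}}$. The goal is then to show that $W$ is a standard Brownian motion and that $M_{t}=W_{A_{t}}$.

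Two substantive points carry the proof. The first is that $W$ has continuous sample paths. This rests on the lemma that a continuous local martingale is almost surely constant on every interval on which its quadratic variation is constant; the proof of the lemma is a localization argument reducing to a bounded martingale, for which $\E[(M_{b}-M_{a})^{2}]=\E([M]_{b}-[M]_{a})=0$ on such an interval. Granting it, $s\mapsto M_{\tau_{s}}$ is continuous even where $\tau$ jumps, since over the corresponding flat stretch of $A$ the path of $M$ does not move. The second point is that $W$ is a $\mathcal{G}$-Brownian motion: using optional stopping (with a localizing sequence to control integrability), $W_{s}=M_{\tau_{s}}$ is a $\mathcal{G}$-martingale, and $W_{s}^{2}-s$ is a $\mathcal{G}$-martingale because $M_{t}^{2}-A_{t}$ is a local martingale and $A_{\tau_{s}}=s$ by continuity of $A$. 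L\'{e}vy's characterization theorem then identifies $W$ as standard Brownian motion.

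To conclude in this case, note that $\tau_{A_{t}}\geq t$ always while $A$ is flat on $[t,\tau_{A_{t}}]$, so the constancy lemma gives $W_{A_{t}}=M_{\tau_{A_{t}}}=M_{t}$ for all $t$. For the general case, where $A_{\infty}$ may be finite with positive probability, I would enlarge the probability space with an independent Brownian motion $B$ and define $W_{s}=M_{\tau_{s}}$ for $s<A_{\infty}$ and $W_{s}=M_{\infty}+(B_{s}-B_{A_{\infty}})$ for $s\geq A_{\infty}$, then verify as before that this is a Brownian motion satisfying $M_{t}=W_{A_{t}}$, using that $M_{t}$ converges to a finite limit $M_{\infty}$ on the event $\{A_{\infty}<\infty\}$ since the quadratic variation is finite there.

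The main obstacle I expect is the path-continuity of $W$, which forces one through the lemma on constancy intervals of a continuous local martingale; once that is in hand, the rest is a fairly mechanical combination of optional stopping and L\'{e}vy's characterization.
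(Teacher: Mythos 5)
The paper does not prove this theorem: it appears in the preliminaries (Section~3.1) as a cited, well-known fact about continuous local martingales, stated without argument and used as a black box in the proofs of Lemma~\ref{lem:norm_At} and Lemma~\ref{lem:set_large}. So there is no ``paper's own proof'' to compare against.

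That said, your proposal is the standard textbook proof (as in Revuz--Yor or Karatzas--Shreve) and it is essentially correct. You correctly isolate the two genuine difficulties: (i) path-continuity of $W_{s}=M_{\tau_{s}}$ across jumps of $\tau$, resolved by the lemma that a continuous local martingale is a.s.\ constant on any interval where its bracket is constant (proved by localization and $\E[(M_{b}-M_{a})^{2}]=\E([M]_{b}-[M]_{a})$); and (ii) identifying $W$ as Brownian via L\'evy's characterization, using that $M$ and $M^{2}-[M]$ are local martingales and optional stopping at the $\tau_{s}$. The reconstruction $M_{t}=W_{A_{t}}$ again uses the constancy lemma since $A$ is flat on $[t,\tau_{A_{t}}]$, and the extension to $A_{\infty}<\infty$ by gluing on an independent Brownian motion is the standard device. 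A couple of small points you should make explicit in a full write-up: you need the underlying filtration to satisfy the usual conditions so that each $\tau_{s}$ is a stopping time and so that $\mathcal{G}_{s}=\mathcal{F}_{\tau_{s}}$ is right-continuous; and when invoking optional stopping you need a localizing sequence $T_{n}\uparrow\infty$ with $M^{T_{n}}$ a bounded martingale to justify $\E[M_{\tau_{s}\wedge T_{n}}\mid\mathcal{F}_{\tau_{r}}]=M_{\tau_{r}\wedge T_{n}}$ before passing to the limit. Neither is a gap in the idea, just bookkeeping.
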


\subsection{Logconcave functions and isoperimetry}

We say a logconcave distribution is \emph{nearly} isotropic if its
covariance matrix $A$ satisfies $\Omega(1)\cdot I\preceq A\preceq O(1)\cdot I$.
The following lemma about logconcave densities is folklore, see e.g.,
\cite{Lovasz2007}.
\begin{lem}[Logconcave moments]
\label{lem:lcmom} Given a logconcave density $p$ in $\R^{n}$,
and any integer $k\geq1$, 
\[
\E_{x\sim p}\norm x^{k}\le(2k)^{k}\left(\E_{x\sim p}\norm x^{2}\right)^{k/2}.
\]
\end{lem}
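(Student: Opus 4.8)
The plan is to read this as a Khinchine-type reverse H\"older inequality for the Euclidean norm under a logconcave distribution, and to prove it by extracting a sub-exponential tail bound for $\norm X$ from the logconcavity of $p$ and then integrating it against $ks^{k-1}\,ds$. Write $Y=\norm X$ for $X\sim p$ and $\sigma=(\E Y^{2})^{1/2}$. Both sides of the claimed inequality scale like $\lambda^{k}$ under $X\mapsto\lambda X$ (which preserves logconcavity), so I may rescale to $\sigma=1$, after which it suffices to prove $\E Y^{k}\le(2k)^{k}$. The case $k=1$ is immediate from Cauchy--Schwarz, $\E Y\le\sigma=1\le2$, so assume $k\ge2$.

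First I would fix a constant $\theta\in(\tfrac12,1)$ close to $1$ (concretely $\theta=\tfrac{9}{10}$), choose $r_{0}>0$ with $\P(Y\le r_{0})=\theta$, and note that Markov's inequality applied to $Y^{2}$ gives $1-\theta=\P(Y^{2}>r_{0}^{2})\le\sigma^{2}/r_{0}^{2}$, hence $r_{0}\le(1-\theta)^{-1/2}$. Next I would invoke a Borell-type tail bound: for the symmetric convex set $K=\{x:\norm x\le r_{0}\}$, which has $p$-measure $\theta$, one has for every $t\ge1$ the Minkowski-sum inclusion
\[
\tfrac{2}{t+1}(tK)^{c}+\tfrac{t-1}{t+1}K\ \subseteq\ K^{c},
\]
immediate from the triangle inequality for $\norm\cdot$: if $\norm a>tr_{0}$ and $\norm b\le r_{0}$, then $\norm{\tfrac{2}{t+1}a+\tfrac{t-1}{t+1}b}\ge\tfrac{2t}{t+1}r_{0}-\tfrac{t-1}{t+1}r_{0}=r_{0}$. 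Applying the Prekopa--Leindler inequality to the triple $p\mathbf 1_{(tK)^{c}},\ p\mathbf 1_{K},\ p\mathbf 1_{K^{c}}$ with weights $\tfrac{2}{t+1},\tfrac{t-1}{t+1}$ and using logconcavity of $p$ gives
\[
\P(Y>tr_{0})\ \le\ \theta\Bigl(\tfrac{1-\theta}{\theta}\Bigr)^{(t+1)/2}=\theta\,e^{-\beta(t+1)},\qquad\beta:=\tfrac12\log\tfrac{\theta}{1-\theta}>0 .
\]

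With this tail I would finish by a routine integration. From $\E Y^{k}=\int_{0}^{\infty}ks^{k-1}\P(Y>s)\,ds$, the substitution $s=r_{0}t$, and splitting at $t=1$, the range $t\le1$ contributes at most $r_{0}^{k}$ and the range $t\ge1$ contributes at most $r_{0}^{k}\theta e^{-\beta}\int_{0}^{\infty}kt^{k-1}e^{-\beta t}\,dt=r_{0}^{k}\theta e^{-\beta}k!\,\beta^{-k}$. Using $r_{0}\le(1-\theta)^{-1/2}$ and the Stirling bound $k!\le e\,k^{k+1/2}e^{-k}$,
\[
\E Y^{k}\ \le\ (1-\theta)^{-k/2}\Bigl(1+\theta e^{-\beta}e\sqrt{k}\,\bigl(\tfrac{k}{e\beta}\bigr)^{k}\Bigr),
\]
and since $\theta$ is now an absolute constant with $e\beta(1-\theta)^{1/2}>\tfrac12$, the right-hand side is below $(2k)^{k}$ for all $k\ge2$ (the smallest values, where the estimate has slack, are a finite check). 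Undoing the rescaling gives $\E_{x\sim p}\norm x^{k}\le(2k)^{k}(\E_{x\sim p}\norm x^{2})^{k/2}$.

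The step I expect to need the most care is this last constant-chasing: the scheme naturally outputs a bound of the form $c\sqrt{k}\,(Ck)^{k}(\E\norm x^{2})^{k/2}$, and getting $C$ strictly below $2$ forces $\theta$ to sit near the maximizer of $\theta\mapsto(1-\theta)^{1/2}\log\tfrac{\theta}{1-\theta}$ on $(\tfrac12,1)$ and requires an honest Stirling estimate together with a sanity check at small $k$. Conceptually nothing is hard here, and since every downstream use of the lemma only needs the bound up to universal constants, one could equally carry a generic $C$. As an alternative to the Prekopa--Leindler step one could instead quote the standard one-dimensional exponential tail estimate for $\norm X$ under a logconcave distribution, but the argument above is self-contained.
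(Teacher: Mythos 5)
The paper states this lemma as folklore and gives no proof (citing \cite{Lovasz2007}), so there is no in-paper argument to compare against. Your proof is correct, and it follows the standard route: you derive the exponential tail for $\norm X$ from logconcavity via the Borell-lemma mechanism (the Minkowski-sum inclusion $\tfrac{2}{t+1}(tK)^{c}+\tfrac{t-1}{t+1}K\subseteq K^{c}$ for the ball $K$, fed through Pr\'ekopa--Leindler), then integrate the tail against $ks^{k-1}$. I checked the ingredients: the reverse-triangle-inequality derivation of the inclusion is right, the Pr\'ekopa--Leindler application is valid (the indicator hypothesis is exactly the inclusion, and logconcavity of $p$ handles the densities), the resulting tail bound $\P(Y>tr_{0})\le\theta e^{-\beta(t+1)}$ with $\beta=\tfrac12\log\tfrac{\theta}{1-\theta}$ is the correct Borell bound, and the Markov estimate $r_{0}\le(1-\theta)^{-1/2}$ is right. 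The constant-chasing at the end is the only place with no slack to spare, so I verified it numerically: with $\theta=\tfrac{9}{10}$ one has $\beta=\log 3$, $\theta e^{-\beta}=\tfrac{3}{10}$, $(1-\theta)^{-1/2}=\sqrt{10}$, and the bound $\E Y^{k}\le(\sqrt{10})^{k}\bigl(1+\tfrac{3}{10}k!(\log 3)^{-k}\bigr)$ is below $(2k)^{k}$ already at $k=2$ (about $15.2$ vs.\ $16$), with growing slack thereafter since $\sqrt{10}/(2e\log 3)<1$. One cosmetic remark: the $k=2$ case is trivially true (both sides are a constant multiple of $\E\norm x^{2}$), so the genuinely delicate case of your estimate is $k=3$ onward, where the margin is already comfortable. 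The argument is a clean, self-contained proof of the folklore fact.
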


\begin{thm}[Poincaré constant \cite{Mazja60,Cheeger69}]
 \label{thm:Poincare}For any logconcave density p in $\R^{n}$
and any function $g$ in $\Rn$, we have
\[
\Var_{p}\left(g(x)\right)\lesssim\psi_{p}^{-2}\cdot\E_{p}\left(\norm{\nabla g(x)}_{2}^{2}\right)
\]
\end{thm}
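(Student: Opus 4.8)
The plan is to prove this via the classical Cheeger argument: first establish an $L^{1}$-Poincar\'e inequality with constant $\psi_{p}^{-1}$ using the coarea formula, and then bootstrap it to the stated $L^{2}$ inequality by applying it to the positive and negative parts of $g^{2}$ together with Cauchy--Schwarz.

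\textbf{Step 1 ($L^{1}$ inequality).} Fix a Lipschitz $g$ (the smooth case follows, and a routine mollification extends it to any $g$ for which the right-hand side is finite), and let $m$ be a median of $g$ under $p$. Write $g_{+}=\max\{g-m,0\}$ and $g_{-}=\max\{m-g,0\}$, so that $|g-m|=g_{+}+g_{-}$ and $\nabla g_{+},\nabla g_{-}$ are supported on disjoint sets. For $t>0$ the super-level set $S_{t}=\{g_{+}>t\}$ satisfies $p(S_{t})\le\frac{1}{2}$ since $m$ is a median, so the definition of $\psi_{p}$ gives $\int_{\partial S_{t}}p\,dx\ge\psi_{p}\,p(S_{t})$. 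The (weighted) coarea formula and Fubini then yield
\[
\int_{\R^{n}}\norm{\nabla g_{+}}\,dp=\int_{0}^{\infty}\Big(\int_{\partial S_{t}}p\,dx\Big)\,dt\ \ge\ \psi_{p}\int_{0}^{\infty}p(S_{t})\,dt\ =\ \psi_{p}\int_{\R^{n}}g_{+}\,dp,
\]
and the same bound holds for $g_{-}$. Adding the two and using the disjointness of the supports of $\nabla g_{+}$ and $\nabla g_{-}$ gives $\int_{\R^{n}}\norm{\nabla g}\,dp\ge\psi_{p}\int_{\R^{n}}|g-m|\,dp$.

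\textbf{Step 2 (bootstrap to $L^{2}$).} The function $g_{+}^{2}$ has median $0$ (because $p(g_{+}=0)\ge\frac{1}{2}$), so applying Step 1 to $g_{+}^{2}$ in place of $g$, followed by Cauchy--Schwarz,
\[
\int g_{+}^{2}\,dp\ \le\ \psi_{p}^{-1}\int\norm{\nabla(g_{+}^{2})}\,dp\ =\ 2\psi_{p}^{-1}\int g_{+}\norm{\nabla g_{+}}\,dp\ \le\ 2\psi_{p}^{-1}\Big(\int g_{+}^{2}\,dp\Big)^{1/2}\Big(\int\norm{\nabla g_{+}}^{2}\,dp\Big)^{1/2},
\]
hence $\int g_{+}^{2}\,dp\le4\psi_{p}^{-2}\int\norm{\nabla g_{+}}^{2}\,dp$, and likewise for $g_{-}$. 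Summing and again invoking the disjoint-support property gives $\int(g-m)^{2}\,dp\le4\psi_{p}^{-2}\int\norm{\nabla g}^{2}\,dp$. Finally $\Var_{p}(g)\le\int(g-m)^{2}\,dp$ since the mean minimizes the second moment, which is the claimed inequality with an absolute constant.

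\textbf{Main obstacle.} As this is a classical fact, there is no deep difficulty here; the only care needed is in the measure-theoretic underpinnings --- justifying the weighted coarea formula and the level-set manipulations for merely Lipschitz $g$ (handled by mollification and the regularity of almost every level set), and the bookkeeping around the median that forces the relevant super-level sets to have $p$-measure at most $\frac{1}{2}$. Every other step is exact and loses only the constant factor that is absorbed into $\lesssim$.
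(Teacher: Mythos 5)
Your proof is correct and is precisely the classical Maz'ja--Cheeger argument ($L^{1}$ Poincar\'e via the coarea formula and a median cut, then bootstrapping to $L^{2}$ by applying it to $g_{\pm}^{2}$ and Cauchy--Schwarz); the paper itself does not reprove this fact and simply cites \cite{Mazja60,Cheeger69}, which is exactly the approach you reconstruct. One small remark on bookkeeping: in Step~1 you should note that $\norm{\nabla g_{+}}+\norm{\nabla g_{-}}\le\norm{\nabla g}$ a.e.\ (equality off $\{g=m\}$, and both vanish a.e.\ on $\{g=m\}$), and in Step~2 likewise $\norm{\nabla g_{+}}^{2}+\norm{\nabla g_{-}}^{2}\le\norm{\nabla g}^{2}$ a.e.; you also implicitly use $\int g_{+}^{2}\,dp>0$ before dividing, which is harmless since the inequality is trivial otherwise. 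These are cosmetic and do not affect correctness.
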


\section{Main proof}

The goal of this section is to prove the following theorem, which
implies Theorem \ref{thm:logsob}.
\begin{thm}
\label{lem:boundAgivesKLS}Given an isotropic logconcave distribution
$p$ with support of diameter $D$. Then, for any measurable subset
$S$, 
\[
p(\partial S)\geq\Omega\left(\frac{\log\frac{1}{p(S)}}{D}+\sqrt{\frac{\log\frac{1}{p(S)}}{D}}\right)p(S).
\]
\end{thm}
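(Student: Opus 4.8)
The plan is to follow the three-step strategy outlined in the paper: run the stochastic localization process, control the covariance spectral norm via the Stieltjes-type potential, control the measure of $S$ via a second potential, and then apply the Gaussian-component log-isoperimetric inequality. Concretely, fix a measurable set $S$ with $g_0 = p(S)$, and assume without loss of generality that $g_0 \le 1/2$ (otherwise replace $S$ by its complement, which only changes constants). We want to show $p(\partial S) \gtrsim \left(\tfrac{\log(1/g_0)}{D} + \sqrt{\tfrac{\log(1/g_0)}{D}}\right) g_0$. Set the stopping time $T = c \min\{1, (\log(1/g_0) + D)^{-1}\}$ for a small constant $c$, but note that since $p$ is isotropic with diameter $D$ we have $D \gtrsim \sqrt{n}$, and in the regime of interest $\log(1/g_0) \lesssim D$ (large $\log(1/g_0)$ forces $g_0$ so small that the bound is handled separately), so effectively $T = \Theta(1/D)$.

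First I would invoke Lemma~\ref{lem:norm_At}: with probability at least, say, $1 - g_0/4$ (or some constant probability, then boosted), the spectral norm $\norm{A_t}_{\spe}$ stays below $2$ for all $t \le T$; this is exactly where the Stieltjes potential $\tr((uI - A_t)^{-q})$ is needed to avoid losing logarithmic factors, and I would take it as a black box. Next, on this good event, I would use Lemma~\ref{lem:g_sqrt_g}: the process $g_t = p_t(S)$ satisfies that $\E[g_T \sqrt{\log(1/g_T)}]$ is at least a constant times $g_0\sqrt{\log(1/g_0)}$. The martingale property $dg_t = \int_S (x-\mu_t)^T dW_t\, p_t(x)\,dx$ from Lemma~\ref{lem:dp_dA} drives this: the quadratic variation of $g_t$ is governed by $\norm{A_t}_{\spe} \le 2$, and an It\^o computation on $h(g) = g\sqrt{\log(1/g)}$ shows its drift is controlled because $h$ is concave enough in the relevant range while its second derivative blows up only like $1/(g\log(1/g))$ near $0$. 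So $\E\, g_T\sqrt{\log(1/g_T)} \gtrsim g_0\sqrt{\log(1/g_0)}$.

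The third ingredient is Theorem~\ref{thm:Gaussian-iso}: $p_T$ has the form (Gaussian of covariance $\tfrac{1}{T}I$) $\times$ (logconcave), so its log-Cheeger constant is $\Omega(\sqrt{T})$, i.e.
\[
p_T(\partial S) \;\gtrsim\; \sqrt{T}\; p_T(S)\sqrt{\log\tfrac{1}{p_T(S)}} \;=\; \sqrt{T}\; g_T\sqrt{\log\tfrac1{g_T}}.
\]
Now I would take expectations. The key point is that boundary measure is essentially a martingale (or at worst changes by a bounded factor) under the localization, so $\E\, p_T(\partial S) \le C \cdot p(\partial S)$ for a universal constant $C$; combined with the good-event bound on $g_T\sqrt{\log(1/g_T)}$ and a small correction for the failure event (whose contribution is subleading because $g_T \le 1$ always and we can choose the failure probability $\lesssim g_0$), we get
\[
p(\partial S) \;\gtrsim\; \sqrt{T}\; \E\bigl[g_T\sqrt{\log\tfrac1{g_T}}\bigr] \;\gtrsim\; \sqrt{T}\; g_0 \sqrt{\log\tfrac1{g_0}} \;\gtrsim\; \frac{g_0\sqrt{\log(1/g_0)}}{\sqrt{\log(1/g_0)+D}}.
\]
When $\log(1/g_0) \le D$ this is $\gtrsim g_0\sqrt{\log(1/g_0)/D}$, the second term of the claimed bound; when $\log(1/g_0) > D$ it is $\gtrsim g_0 \log(1/g_0)/\sqrt{\log(1/g_0)(\log(1/g_0)+D)} \gtrsim g_0\sqrt{\log(1/g_0)}/\sqrt{\log(1/g_0)} \cdot \tfrac{1}{\sqrt 2}$, but more carefully in that regime one recovers $g_0 \log(1/g_0)/D$ — alternatively, the $\log(1/g_0)/D$ term is obtained from the trivial/known bound $\kappa_p \gtrsim 1/D$ of Kannan--Lov\'asz--Montenegro squared appropriately, or simply because when $\log(1/g_0) \ge D$ we may just iterate the inequality. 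Taking the maximum of the two estimates yields the stated sum (up to the usual fact that $a + b \asymp \max\{a,b\}$).

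The main obstacle I expect is the second step — proving that $\E[g_T\sqrt{\log(1/g_T)}]$ does not decay, i.e., Lemma~\ref{lem:g_sqrt_g}. The function $g \mapsto g\sqrt{\log(1/g)}$ is not concave on all of $(0,1)$ and its second derivative is singular at both endpoints, so one cannot simply say "martingale plus concave function." One must carefully track the It\^o drift term $\tfrac12 h''(g_t)\, d[g]_t$, bound $d[g]_t$ using $\norm{A_t}_{\spe} \le 2$ and the fact that the "mass-weighted variance" contribution is at most $O(g_t)$ times a constant (not $O(g_t^2)$, which would be too weak near $g_t$ small, nor something that ignores the $\log$ factor), and show the accumulated drift over $[0,T]$ is a small fraction of $g_0\sqrt{\log(1/g_0)}$ given $T \lesssim (\log(1/g_0)+D)^{-1}$. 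A secondary subtlety is handling the bad event in the expectation: since $g_T \sqrt{\log(1/g_T)}$ is bounded (it never exceeds a constant) one loses at most $O(\P[\text{bad}])$, and choosing the constant in Lemma~\ref{lem:norm_At} so that $\P[\text{bad}] \ll g_0\sqrt{\log(1/g_0)}/\sqrt{T}$ requires the spectral-norm bound to hold with probability $1 - \mathrm{poly}(g_0)$, which is again precisely what the Stieltjes barrier buys us over the cruder $\tr(A_t^q)$ potential.
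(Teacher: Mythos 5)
Your three-step strategy is exactly the paper's, and the key lemma chain (Lemma~\ref{lem:norm_At}, Lemma~\ref{lem:g_sqrt_g}, Theorem~\ref{thm:Gaussian-iso}, then taking expectations using the martingale property of $p_t$) is correct; note that $p_t(\partial S)$ is exactly a martingale by Lemma~\ref{lem:dp_dA}, so you in fact have $\E\,p_T(\partial S)=p(\partial S)$, no bounded-factor slack needed.

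However there is a genuine gap in how you choose the stopping time $T$, and it costs you the first term of the claimed bound. You fix a single $T\asymp\bigl(\log\tfrac{1}{g_0}+D\bigr)^{-1}$, which yields $p(\partial S)\gtrsim\sqrt{T}\,g_0\sqrt{\log\tfrac{1}{g_0}}=g_0\sqrt{\log\tfrac{1}{g_0}}\bigl/\sqrt{\log\tfrac{1}{g_0}+D}$. When $\log\tfrac{1}{g_0}\le D$ this gives the $\sqrt{\log(1/g_0)/D}$ term, fine. But when $\log\tfrac{1}{g_0}>D$ it only gives $\gtrsim g_0$, which is strictly \emph{weaker} than the target $\tfrac{\log(1/g_0)}{D}g_0$ (since $\log(1/g_0)/D>1$ in that regime). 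Your two fallbacks do not rescue this: the Kannan--Lov\'asz--Montenegro bound $\kappa_p\gtrsim 1/D$ only gives $p(\partial S)\gtrsim\tfrac{1}{D}p(S)\sqrt{\log\tfrac{1}{p(S)}}$, which is \emph{smaller} than $\tfrac{\log(1/p(S))}{D}p(S)$ whenever $p(S)<1/e$, and ``iterate the inequality'' is not an argument. The paper avoids the issue by taking $T\le c\cdot\max\bigl(\tfrac{1}{D^2}\log\tfrac{1}{g_0},\ \tfrac{1}{\log(1/g_0)+D}\bigr)$, i.e.\ a \emph{larger} $T\asymp\log(1/g_0)/D^2$ in the regime $\log(1/g_0)>D$; then $\sqrt{T}\,g_0\sqrt{\log(1/g_0)}\asymp\tfrac{\log(1/g_0)}{D}g_0$, which is the needed first term. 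The reason this larger $T$ is admissible is Case~1 of Lemma~\ref{lem:g_sqrt_g}: when $T\le\tfrac{1}{8}D^{-2}\log\tfrac{1}{g_0}$, one controls $\E\bigl[g_T\sqrt{\log(1/g_T)}1_{g_T\le 1/2}\bigr]$ using only the crude bound $d[g]_t\le D^2g_t^2\,dt$ (Lemma~\ref{lem:set_large}), without ever invoking the covariance bound of Lemma~\ref{lem:norm_At}; the constraint $T\lesssim 1/\sqrt{n}$ from the Stieltjes-barrier lemma is therefore irrelevant in this regime. You would need to add this second case with its own $T$ to close the gap.
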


\subsection{Bounding the spectral norm of the covariance matrix}

The main lemma of this section is the following. 
\begin{lem}
\label{lem:norm_At}Assume that for $k\geq1$, $\psi_{p}\gtrsim\left(\tr A^{k}\right)^{-1/2k}$
for all logconcave distribution $p$ with covariance $A$. There is
some universal constant $c\geq0$ such that for any $0\leq T\leq\frac{1}{c\cdot kn^{1/k}}$,
we have that
\[
\P(\max_{t\in[0,T]}\norm{A_{t}}_{\spe}\geq2)\leq2\exp(-\frac{1}{cT}).
\]
\end{lem}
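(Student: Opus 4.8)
The plan is to control $\norm{A_t}_{\spe}$ indirectly through the Stieltjes-type potential $u_t \defeq u(A_t)$ defined by $\tr((u_t I - A_t)^{-q}) = \Phi$, with the parameters $q$ and $\Phi$ chosen so that (i) at time $0$, where $A_0 = I$ by isotropy, we have $u_0$ bounded away from $2$ (say $u_0 \le 3/2$), and (ii) $u_t \ge 2$ forces $\norm{A_t}_{\spe} \ge 2$, so $\{u_t < 2\}$ is a safe event. The natural choices are $q = \Theta(k)$ and $\Phi = \Theta(n)$: since $\tr((u_0 I - I)^{-q}) = n/(u_0-1)^q$, picking $\Phi = n \cdot 2^q$ gives $u_0 = 3/2$, and then $u_t \ge 2$ would require some eigenvalue within distance $\le (n/\Phi)^{1/q} = 1/2$ of $u_t \ge 2$, i.e.\ $\norm{A_t}_{\spe} \ge 3/2$; tuning constants gives the clean threshold. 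So it suffices to show $u_t$ stays below $2$ on $[0,T]$ with the stated probability.

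First I would compute $du_t$ via It\^o's formula (Lemma \ref{lem:Ito}), differentiating the implicit relation $\tr((u_t I - A_t)^{-q}) = \Phi$. Writing $M_t = (u_t I - A_t)^{-1}$, the first-order terms give $q\tr(M_t^{q+1})\,du_t = q\,\tr(M_t^{q+1} dA_t) + (\text{second-order correction})$, and one plugs in the expression \eqref{eq:dA} for $dA_t$. The drift $-A_t^2\,dt$ in $dA_t$ contributes a term that is \emph{favorable} (it pushes $A_t$, hence $u_t$, downward), so the real work is bounding the martingale part and the It\^o correction. The martingale part of $du_t$ is of the form $\langle v_t, dW_t\rangle$ for some vector $v_t$ built from $\int (x-\mu_t)(x-\mu_t)^T\big((x-\mu_t)^T \cdot\big)p_t$ contracted against $M_t^{q+1}$, and its quadratic variation $d[u]_t$ is what we must estimate. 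Here is where the hypothesis $\psi_p \gtrsim (\tr A^k)^{-1/2k}$ enters: the Poincar\'e inequality (Theorem \ref{thm:Poincare}) applied to the distribution $p_t$ (which is logconcave, being $p$ times a Gaussian times an exponential) bounds fourth-moment-type quantities like $\int \langle x-\mu_t, w\rangle^2 (x-\mu_t)(x-\mu_t)^T p_t$ in terms of $A_t$ and $\psi_{p_t}^{-2} \lesssim \tr(A_t^k)^{1/k}$. On the event that $\norm{A_t}_{\spe} \le 2$, we have $\tr(A_t^k)^{1/k} \le 2 n^{1/k}$, and a careful trace manipulation should yield $d[u]_t \lesssim k\, n^{1/k}\, dt$ and an It\^o-correction drift also $O(k\,n^{1/k})\,dt$ — up to the universal constant $c$ in the statement. (The factor $q = \Theta(k)$ shows up because we differentiate a $q$-th power; the $n^{1/k}$ because the Poincar\'e bound costs $\tr(A_t^k)^{1/k}$.)

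Given those estimates, the finish is a standard stopping-time / martingale argument. Let $\tau$ be the first time $u_t$ hits $2$ (equivalently, up to constants, the first time $\norm{A_t}_{\spe}$ hits $2$); on $[0,\tau]$ the bounds above hold. Then $u_{t\wedge\tau} - u_0 - (\text{drift})$ is a continuous martingale with quadratic variation $\le c\,k\,n^{1/k}\,(t\wedge\tau)$, so by the Dambis–Dubins–Schwarz theorem (Theorem \ref{thm:Dubins}) it is a time-changed Brownian motion, and the reflection principle (Lemma \ref{lem:reflection}) bounds the probability it ever exceeds $1/2$ (the gap $u_0 \to 2$ minus the accumulated drift $\lesssim k n^{1/k} T \le 1/c'$) by $2\exp(-\Omega(1/(k n^{1/k} T))) = 2\exp(-\Omega(1/(cT)))$ once $T \le 1/(c k n^{1/k})$. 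The main obstacle I anticipate is step two: getting the quadratic variation and the It\^o-correction drift of $u_t$ bounded by $O(k n^{1/k})$ \emph{with no extraneous logarithmic or polynomial-in-$n$ factors}. This is precisely where the Stieltjes potential $\tr((uI-A)^{-q})$ is essential rather than the cruder $\tr(A^q)$: the former's derivatives stay concentrated near the top eigenvalue and interact cleanly with the Poincar\'e bound, whereas $\tr(A^q)$ would lose a $\log$ factor. The bookkeeping of the trace inequalities (Cauchy–Schwarz among $\tr(M_t^a)$ for various powers, and comparing $\tr(M_t^{q+2})$ to $\tr(M_t^{q+1})$ on the safe event) is the technical heart.
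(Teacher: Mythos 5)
Your blueprint matches the paper's at a high level: the same Stieltjes potential $u_t = u(A_t)$ with $q=\Theta(k)$ and $\Phi = n\cdot 2^q$ (so $u_0 = 3/2$), the same It\^o computation, the same use of Dambis--Dubins--Schwarz and reflection. Your stopping-time at $u_t = 2$ is a legitimate alternative to the paper's reparametrization $\Psi_t = -(u_t+1)^{-2}$: the paper chooses a function of $u_t$ whose derivative $\sim (u_t+1)^{-3}$ cancels the growth $\norm{\alpha_t}_2 \lesssim u_t^{3/2}$, making the martingale coefficient \emph{globally} bounded, whereas you instead localize to the event $\{u_t \le 2\}$; both routes bound the same quantities.

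However, there is a real quantitative error in your estimate of the quadratic variation, and it is fatal as written. You assert that ``a careful trace manipulation should yield $d[u]_t \lesssim k\,n^{1/k}\,dt$ and an It\^o-correction drift also $O(k\,n^{1/k})\,dt$.'' Only the second of these is right. The martingale part of $du_t$ comes from the \emph{first-order} It\^o term applied to the martingale part of $dA_t$, namely $\alpha_t = \frac{1}{\kappa_t}\E_{x\sim\tilde p_t}\,x^T(uI-A_t)^{-(q+1)}x\cdot x$ with $\kappa_t = \tr((uI-A_t)^{-(q+1)})$. Its size is controlled by Lemma~\ref{lem:norm_expectation} alone, which gives $\norm{\alpha_t}_2 \lesssim \norm{A_t}_{\spe}^{1/2}\,\tr(A_t(uI-A_t)^{-(q+1)})/\kappa_t \lesssim u_t^{3/2}$; this has no dependence on $\Phi$, $n^{1/k}$, or the Poincar\'e constant, and on $\{u_t\le 2\}$ it gives $d[u]_t \lesssim dt$. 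The factor $k\Phi^{1/k}\approx kn^{1/k}$ enters \emph{only} through the second-order (drift) term, where the assumed KLS/Poincar\'e bound is needed via the tensor estimate (Lemma~\ref{lem:tensor_bound}). Mixing the two is not harmless: with your inflated $d[u]_t \lesssim kn^{1/k}\,dt$, Dubins--Schwarz plus reflection give at best
\[
\P\Bigl(\sup_{t\le T} Y_t \ge \tfrac12\Bigr) \;\lesssim\; \exp\Bigl(-\Omega\bigl(\tfrac{1}{k n^{1/k} T}\bigr)\Bigr),
\]
and your final line ``$2\exp(-\Omega(1/(k n^{1/k} T))) = 2\exp(-\Omega(1/(cT)))$ once $T \le 1/(ckn^{1/k})$'' is false: on that range $k n^{1/k}T \le 1/c$, so the left-hand side is merely $\exp(-\Omega(1))$, a constant, while $\exp(-1/(cT))$ can be $\exp(-k n^{1/k})$ or smaller. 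To repair the proof you need the sharper bound $d[u]_t \lesssim u_t^3\,dt$ on the martingale quadratic variation (with the drift, not the quadratic variation, carrying the $k\Phi^{1/k}$), exactly as in the paper's Lemma~\ref{lem:bound_on_alpha_beta}; then the drift accumulates to $O(1)$ over $T\le 1/(ckn^{1/k})$, the martingale's quadratic variation is $O(T)$, and the reflection principle yields the stated $2\exp(-1/(cT))$.
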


\begin{rem*}
Recall that Theorem \ref{thm:LeeV} showed that the assumption for
$k=2$.
\end{rem*}
We defer the proof of the following lemma to the end of this section. 
\begin{lem}
\label{lem:stoc_du}We have that
\[
du(A_{t})=\alpha_{t}^{T}dW_{t}+\beta_{t}dt
\]
where
\begin{align*}
\alpha_{t}= & \frac{1}{\kappa_{t}}\E_{x\sim\tilde{p}_{t}}x^{T}(uI-A_{t})^{-(q+1)}x\cdot x,\\
\frac{\beta_{t}}{q+1}\leq & \frac{1}{2\kappa_{t}}\E_{x,y\sim\tilde{p}_{t}}x^{T}(uI-A_{t})^{-1}y\cdot x^{T}(uI-A_{t})^{-(q+1)}y\cdot x^{T}y\\
 & -\frac{1}{\kappa_{t}^{2}}\E_{x,y\sim\tilde{p}_{t}}x^{T}(uI-A_{t})^{-(q+1)}x\cdot y^{T}(uI-A_{t})^{-(q+2)}y\cdot x^{T}y\\
 & +\frac{1}{2\kappa_{t}^{3}}\tr((uI-A_{t})^{-(q+2)})\cdot\E_{x,y\sim\tilde{p}_{t}}x^{T}(uI-A_{t})^{-(q+1)}x\cdot y^{T}(uI-A_{t})^{-(q+1)}y\cdot x^{T}y,\\
\kappa_{t}= & \tr((uI-A_{t})^{-(q+1)})
\end{align*}
and $\tilde{p}_{t}$ be the translation of $p_{t}$ defined by $\tilde{p}_{t}(x)=p_{t}(x+\mu_{t})$.
\end{lem}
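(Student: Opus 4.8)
The plan is to extract the first two derivatives of the implicitly defined function $u$ on symmetric matrices, and then apply It\^o's formula (Lemma~\ref{lem:Ito}) to $u(A_t)$ using the covariance SDE of Lemma~\ref{lem:dp_dA}; after that the computation is lengthy but mechanical. Abbreviate $P=P(X)=(uI-X)^{-1}$, so the defining relation is $\tr(P^{q})=\Phi$, and write $\kappa=\tr(P^{q+1})$, which is $\kappa_t$ at $X=A_t$ (where $P=(uI-A_t)^{-1}$). Since the $u$-derivative of $\tr((uI-X)^{-q})$ equals $-q\,\tr(P^{q+1})\neq0$, the implicit function theorem gives that $u$ is smooth near $A_t$, with $X\prec u(X)I$ (hence $P\succ0$) there, so It\^o's formula applies. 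Differentiating $\tr((uI-X)^{-q})=\Phi$ in $X_{ij}$ and solving,
\[
\frac{\partial u}{\partial X_{ij}}=\frac{(P^{q+1})_{ij}}{\tr(P^{q+1})}=\frac{(P^{q+1})_{ij}}{\kappa}.
\]

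Next apply It\^o's formula to $u(A_t)$. By Lemma~\ref{lem:dp_dA} the martingale part of $dA_t$ has $(i,j)$ entry $\sum_m\E_{x\sim\tilde{p}_t}[x_ix_jx_m]\,(dW_t)_m$ and the drift part is $-(A_t^2)_{ij}\,dt$, so the quadratic covariation is $d[(A_t)_{ij},(A_t)_{kl}]_t=\E_{x,y\sim\tilde{p}_t}[x_ix_jy_ky_l\,(x^Ty)]\,dt$ for independent copies $x,y$. Contracting $\partial u/\partial X_{ij}$ against the martingale part collapses the index sum into a quadratic form and yields exactly $\alpha_t^TdW_t$ with $\alpha_t=\tfrac1{\kappa_t}\E_{x\sim\tilde{p}_t}[x^T(uI-A_t)^{-(q+1)}x\cdot x]$; contracting it against the drift part contributes $-\tfrac1{\kappa}\tr(P^{q+1}A_t^2)\,dt$ to $\beta_t\,dt$, and since $P^{q+1}\succeq0$ and $A_t^2\succeq0$ this is $\le0$ and will be discarded in the claimed upper bound.

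The bulk of the work is the second-order term $\tfrac12\sum_{ij,kl}\frac{\partial^2u}{\partial X_{ij}\partial X_{kl}}\,\E_{x,y\sim\tilde{p}_t}[x_ix_jy_ky_l(x^Ty)]\,dt$. Differentiating $\partial u/\partial X_{ij}=(P^{q+1})_{ij}/\kappa$ once more, using $\partial_{kl}P=-\tfrac{(P^{q+1})_{kl}}{\kappa}P^2+PE_{kl}P$ with $E_{kl}$ the matrix unit, the power rule $\partial(P^{q+1})=\sum_{m=0}^{q}P^m(\partial P)P^{q-m}$, and the quotient rule, one obtains $\partial^2u/\partial X_{ij}\partial X_{kl}$ as four families of terms. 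Three of them carry the tensors $(P^{q+1})_{kl}(P^{q+2})_{ij}$, $(P^{q+1})_{ij}(P^{q+2})_{kl}$ and $(P^{q+1})_{ij}(P^{q+1})_{kl}$; contracting with $x_ix_jy_ky_l(x^Ty)$ and using that the expectation is symmetric in $x\leftrightarrow y$ (which identifies the first two tensors) turns these into $(q+1)$ times the second and third displayed terms of the lemma, i.e. $-\tfrac{q+1}{\kappa^2}\E[x^TP^{q+1}x\cdot y^TP^{q+2}y\cdot x^Ty]$ and $\tfrac{q+1}{2\kappa^3}\tr(P^{q+2})\,\E[x^TP^{q+1}x\cdot y^TP^{q+1}y\cdot x^Ty]$. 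The fourth family, coming from the $PE_{kl}P$ piece in the power rule, contracts to the telescoping sum $\tfrac1{2\kappa}\sum_{m=0}^{q}\E_{x,y\sim\tilde{p}_t}[x^TP^{m+1}y\cdot x^TP^{q+1-m}y\cdot x^Ty]$, whose $m=0$ term (equal to the $m=q$ term) is the first displayed term of the lemma.

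It remains to bound this telescoping sum by $q+1$ times its $m=0$ term; this is where inequality, rather than equality, enters. Pass to the eigenbasis of $P$: writing $p_1,\dots,p_n>0$ for the eigenvalues of $P$ and $T_{ijk}=\E_{x\sim\tilde{p}_t}[x_ix_jx_k]$ for the third-moment tensor of $\tilde{p}_t$ in that basis, independence and equidistribution of $x,y$ give $\E[x^TP^ay\cdot x^TP^by\cdot x^Ty]=\sum_{i,j,k}p_i^a p_j^b\,T_{ijk}^2\ge0$ for all integers $a,b\ge0$. Since $T_{ijk}^2$ is fully symmetric, I may symmetrize in $i,j$; then weighted AM--GM $p_i^m p_j^{q-m}\le\tfrac mq p_i^q+\tfrac{q-m}q p_j^q$, summed over $m=0,\dots,q$, gives $\sum_{m=0}^{q}p_i^{m+1}p_j^{q+1-m}\le\tfrac{q+1}2\big(p_i^{q+1}p_j+p_ip_j^{q+1}\big)$, whence the telescoping sum is $\le(q+1)\sum_{ijk}p_ip_j^{q+1}T_{ijk}^2=(q+1)\,\E[x^TPy\cdot x^TP^{q+1}y\cdot x^Ty]$. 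Collecting the four families, dividing the drift of $du(A_t)$ by $q+1$, and discarding the nonpositive $-\tfrac1{\kappa}\tr(P^{q+1}A_t^2)$ contribution then yields the stated bound on $\beta_t/(q+1)$. I expect the two delicate points to be keeping the symmetric-matrix differentiation conventions consistent so that no spurious factors of $2$ enter the Hessian, and the realization that the ``middle'' terms $1\le m\le q-1$ of the telescoping sum, although not individually dominated by the endpoint, are controlled after the symmetrization-plus-AM--GM step.
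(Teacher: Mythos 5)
Your proposal is correct and follows essentially the same route as the paper: compute the first and second Fr\'echet derivatives of the implicitly defined $u$, apply It\^o's formula to $u(A_t)$ using the covariance SDE, discard the nonpositive drift contribution, and bound the Hessian's cross-term telescope $\sum_{m=0}^{q}\E[x^TP^{m+1}y\cdot x^TP^{q+1-m}y\cdot x^Ty]$ by $(q+1)$ times its endpoint. The only (cosmetic) difference is that where the paper packages this last step as Lemma~\ref{lem:tensor_shift}, you carry out the same eigenvalue AM--GM estimate inline in the eigenbasis of $(uI-A_t)^{-1}$ against the third-moment tensor $T_{ijk}$; both reduce to the identical weighted-AM--GM inequality on eigenvalues.
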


To estimate $\alpha_{t}$, we need the following lemma proved in \cite{LeeV17KLS}.
\begin{lem}[{\cite[Lemma 25]{LeeV17KLS}}]
\label{lem:norm_expectation}Given a logconcave distribution $p$
with mean $\mu$ and covariance $A$. For any $C\succeq0$, we have
that
\[
\norm{\E_{x\sim p}(x-\mu)^{T}C(x-\mu)(x-\mu)}_{2}=O(\norm A_{\spe}^{1/2})\tr\left(A^{1/2}CA^{1/2}\right).
\]
\end{lem}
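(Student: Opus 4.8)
The plan is to bound this vector norm by duality together with two applications of Cauchy--Schwarz (one of which is the reverse-H\"older moment comparison for logconcave measures). First I would translate $p$ so that $\mu=0$ and set $v\defeq\E_{x\sim p}\bigl[(x^{T}Cx)\,x\bigr]\in\Rn$, the vector whose length we must control. Writing the Euclidean norm as a supremum of linear functionals and using $\theta^{T}v=\E_{x\sim p}[(x^{T}Cx)(\theta^{T}x)]$,
\[
\norm v_{2}=\sup_{\norm{\theta}_{2}=1}\E_{x\sim p}\bigl[(x^{T}Cx)(\theta^{T}x)\bigr],
\]
so it suffices to bound the right-hand side for an arbitrary unit vector $\theta$.

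Next, apply the Cauchy--Schwarz inequality in $L^{2}(p)$ to the scalar random variables $x^{T}Cx$ and $\theta^{T}x$:
\[
\left|\E_{x\sim p}\bigl[(x^{T}Cx)(\theta^{T}x)\bigr]\right|\le\left(\E_{x\sim p}(x^{T}Cx)^{2}\right)^{1/2}\left(\E_{x\sim p}(\theta^{T}x)^{2}\right)^{1/2}.
\]
The second factor equals $(\theta^{T}A\theta)^{1/2}\le\norm A_{\spe}^{1/2}$. For the first factor I would use $C\succeq0$ to write $C=C^{1/2}C^{1/2}$ and put $z\defeq C^{1/2}x$, so that $x^{T}Cx=\norm z_{2}^{2}$ and $z$ is logconcave (a linear image of the logconcave $x$) with $\E\norm z_{2}^{2}=\tr(CA)=\tr(A^{1/2}CA^{1/2})$ by cyclicity of the trace. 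Applying Lemma~\ref{lem:lcmom} with $k=4$ to $z$ gives $\E\norm z_{2}^{4}\le(2\cdot4)^{4}(\E\norm z_{2}^{2})^{2}$, hence $\bigl(\E_{x\sim p}(x^{T}Cx)^{2}\bigr)^{1/2}\le64\,\tr(A^{1/2}CA^{1/2})$. Combining the two factors and taking the supremum over $\theta$ yields $\norm v_{2}=O(\norm A_{\spe}^{1/2})\,\tr(A^{1/2}CA^{1/2})$, which is the claim after translating back.

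I do not expect a genuine obstacle: once the duality reformulation is written down, the estimate is two lines. The only point deserving a remark is that when $C$ is singular, $z=C^{1/2}x$ is supported on a proper subspace; but logconcavity and the moment comparison of Lemma~\ref{lem:lcmom} are insensitive to restricting to the affine hull of the support (the Euclidean norm $\norm{z}_{2}$ is unchanged), so the argument goes through verbatim. One should also note in passing that all the expectations above are finite because logconcave densities have moments of every order, so $v$ is well defined.
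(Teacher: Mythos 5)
Your argument is correct, and it is essentially the standard proof of this fact: the paper itself only cites \cite{LeeV17KLS} for this lemma, and the proof there proceeds exactly as you do, via duality for the Euclidean norm, Cauchy--Schwarz splitting off $\left(\E(\theta^{T}x)^{2}\right)^{1/2}\le\norm A_{\spe}^{1/2}$, and the reverse-H\"older moment bound (Lemma \ref{lem:lcmom}) applied to $\norm{C^{1/2}x}_{2}$ to get $\left(\E(x^{T}Cx)^{2}\right)^{1/2}\lesssim\tr(A^{1/2}CA^{1/2})$. Your remarks on the singular-$C$ case and on finiteness of the moments are appropriate and the constants check out.
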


To estimate $\beta_{t}$, we prove the following bound that crucially
uses the KLS bound for non-isotropic logconcave distribution (the
assumption in Lemma \ref{lem:norm_At}).
\begin{lem}
\label{lem:tensor_bound}Under the assumption as Lemma \ref{lem:norm_At}.
Given a logconcave distribution $p$ with mean $\mu$ and covariance
$A$. For any $B^{(1)},B^{(2)},B^{(3)}\succeq0$, 
\begin{align*}
 & \left|\E_{x,y\sim p}(x-\mu)^{T}B^{(1)}(y-\mu)\cdot(x-\mu)^{T}B^{(2)}(y-\mu)\cdot(x-\mu)^{T}B^{(3)}(y-\mu)\right|\\
\leq & O(1)\cdot\tr(A^{\frac{1}{2}}B^{(1)}A^{\frac{1}{2}})\cdot\left(\tr(A^{\frac{1}{2}}B^{(2)}A^{\frac{1}{2}})^{k}\right)^{1/k}\cdot\norm{A^{\frac{1}{2}}B^{(3)}A^{\frac{1}{2}}}_{\spe}.
\end{align*}
\end{lem}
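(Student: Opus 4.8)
The plan is to reduce to the centered isotropic setting, record that the quantity is nonnegative, and then peel off the three matrix norms one at a time; extracting the Schatten-$k$ norm of the middle factor is the only step that really uses the KLS hypothesis, and it is the main obstacle. First I would translate so $\mu=0$, and observe that the trilinear expectation is in fact nonnegative: factoring $B^{(m)}=(D^{(m)})^{2}$ with $D^{(m)}\succeq0$ and expanding the product, $\E_{x,y}\prod_{m}(x^{T}D^{(m)}D^{(m)}y)$ becomes a sum of squares of the real numbers $\E_{u}\prod_{m}(D^{(m)}u)_{\ell_{m}}$, so the absolute value may be dropped. Then I would whiten: with $x=A^{1/2}u$, $y=A^{1/2}v$ and $u,v$ i.i.d.\ from the isotropic logconcave density $\bar{p}(u)\propto p(A^{1/2}u)$, each $B^{(m)}$ becomes $C^{(m)}:=A^{1/2}B^{(m)}A^{1/2}\succeq0$, and it suffices to prove $\E_{u,v\sim\bar{p}}\prod_{m}(u^{T}C^{(m)}v)\lesssim\tr(C^{(1)})\cdot\|C^{(2)}\|_{S_{k}}\cdot\|C^{(3)}\|_{\spe}$, where $\|\cdot\|_{S_{k}}$ is the Schatten-$k$ norm. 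The hypothesis of Lemma \ref{lem:norm_At} gives $\psi_{\bar{p}}\gtrsim n^{-1/2k}$, and also the analogous bound for the conditional laws of $\bar{p}$ along hyperplanes (whose covariance can only shrink, so the estimate only improves); via Theorem \ref{thm:Poincare} this yields $\Var_{\bar{p}}(u^{T}Gu)\lesssim n^{1/k}\|G\|_{F}^{2}$ for every symmetric $G$, and similar statements conditionally.

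Next I would extract $\|C^{(3)}\|_{\spe}$: writing $C^{(3)}=\sum_{j}\lambda_{j}\phi_{j}\phi_{j}^{T}$ with $0\le\lambda_{j}\le\|C^{(3)}\|_{\spe}$, we have $u^{T}C^{(3)}v=\sum_{j}\lambda_{j}(\phi_{j}^{T}u)(\phi_{j}^{T}v)$, and since each $\E_{u,v}(u^{T}C^{(1)}v)(u^{T}C^{(2)}v)(\phi_{j}^{T}u)(\phi_{j}^{T}v)$ is nonnegative (apply the previous paragraph with $C^{(3)}$ replaced by $\phi_{j}\phi_{j}^{T}$), the sum is at most $\|C^{(3)}\|_{\spe}\cdot T'$ with $T':=\E_{u,v}(u^{T}C^{(1)}v)(u^{T}C^{(2)}v)(u^{T}v)$. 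Carrying out the $v$-integral for fixed $u$ identifies $T'$ with $\langle\mathcal{M},(C^{(1)}\otimes C^{(2)}\otimes I)\mathcal{M}\rangle$, where $\mathcal{M}_{abc}=\E_{u}[u_{a}u_{b}u_{c}]$ is the fully symmetric third-moment tensor, and equivalently with $\sum_{c}\|(C^{(1)})^{1/2}\mathcal{M}^{(c)}(C^{(2)})^{1/2}\|_{F}^{2}$, where $\mathcal{M}^{(c)}:=\E_{u}[u_{c}\,uu^{T}]=\cov_{u}(u_{c},uu^{T})$. Gathering the $c$-sum into the single PSD matrix $\Gamma:=\sum_{c}\mathcal{M}^{(c)}C^{(2)}\mathcal{M}^{(c)}$ gives $T'=\langle C^{(1)},\Gamma\rangle\le\tr(C^{(1)})\cdot\|\Gamma\|_{\spe}$, and a short computation shows $\|\Gamma\|_{\spe}=\sup_{\|w\|=1}\tr\!\left(C^{(2)}(H^{(w)})^{2}\right)$ with $H^{(w)}:=\sum_{a}w_{a}\mathcal{M}^{(a)}=\cov_{u}(w^{T}u,uu^{T})$. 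So everything reduces to $\sup_{\|w\|=1}\tr\!\left(C^{(2)}(H^{(w)})^{2}\right)\lesssim\|C^{(2)}\|_{S_{k}}$.

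This last inequality is the crux. Two bounds on $H^{(w)}$, both for $\|w\|=1$, are available: a \emph{flat} bound $\|H^{(w)}\|_{F}^{2}\lesssim n^{1/k}$, since $\|H^{(w)}\|_{F}=\sup_{G}\cov_{u}(w^{T}u,u^{T}Gu)$ over symmetric $G$ with $\|G\|_{F}=1$ and $\cov_{u}(w^{T}u,u^{T}Gu)\le\sqrt{\Var_{u}(u^{T}Gu)}$ together with the KLS variance bound above; and a \emph{spectral} bound $\|H^{(w)}\|_{\spe}\lesssim1$, since $\|H^{(w)}\|_{\spe}=\sup_{\|z\|=1}|\E_{u}(w^{T}u)(z^{T}u)^{2}|\le\sup_{z}\sqrt{\Var_{u}(w^{T}u)\cdot\Var_{u}((z^{T}u)^{2})}$ and the fourth-moment bound of Lemma \ref{lem:lcmom}. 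Writing $C^{(2)}=\sum_{i}\sigma_{i}\psi_{i}\psi_{i}^{T}$, one has $\tr\!\left(C^{(2)}(H^{(w)})^{2}\right)=\sum_{i}\sigma_{i}\|H^{(w)}\psi_{i}\|^{2}$ with $\|H^{(w)}\psi_{i}\|^{2}\le\|H^{(w)}\|_{\spe}^{2}\lesssim1$ and $\sum_{i}\|H^{(w)}\psi_{i}\|^{2}=\|H^{(w)}\|_{F}^{2}\lesssim n^{1/k}$.

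The hard part is that a naive Schatten-Hölder/interpolation of those two facts alone yields only $\sum_{i}\sigma_{i}\|H^{(w)}\psi_{i}\|^{2}\lesssim n^{(k-1)/k^{2}}\|C^{(2)}\|_{S_{k}}$ — off by a genuine power of $n$ (e.g.\ $n^{1/4}$ when $k=2$) when $H^{(w)}$ happens to concentrate its $\sim n^{1/k}$ significant directions inside the range of $C^{(2)}$. To close this gap one must sharpen the flat bound precisely in the directions where $C^{(2)}$ is large, by invoking the KLS hypothesis not for $\bar{p}$ itself but for its conditional laws, whose covariance $A(\cdot)$ degenerates exactly in those directions, so that the $n^{1/k}$ really carries a factor $(\tr A(\cdot)^{k})^{1/k}$ that can be charged against the matching quantity inside $\|C^{(2)}\|_{S_{k}}$. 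This is the step that genuinely needs "$\psi\gtrsim(\tr A^{k})^{-1/2k}$ for all logconcave laws", not just the isotropic KLS constant; once it is carried out, chaining the three extractions and undoing the whitening of the first paragraph proves the lemma.
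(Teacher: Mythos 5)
Your reduction---whitening, the nonnegativity observation, extracting $\norm{C^{(3)}}_{\spe}$, introducing the third-moment slices $\mathcal{M}^{(c)}$, and isolating $\sup_{\norm{w}=1}\tr\bigl(C^{(2)}(H^{(w)})^2\bigr)\lesssim(\tr(C^{(2)})^k)^{1/k}$ as the crucial inequality---essentially parallels the paper's setup (the paper rotates so that $C^{(1)}$ is diagonal, so it only needs $w=e_k$; this is available because $\bar p$ is isotropic). Your diagnosis that interpolating the two static bounds $\norm{H^{(w)}}_F^2\lesssim n^{1/k}$ and $\norm{H^{(w)}}_{\spe}\lesssim 1$ loses a factor $n^{(k-1)/k^2}$ is also correct, so you have located the hard point precisely. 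But the final paragraph, which carries the entire weight of the argument, is a statement of intent rather than a proof, and the tool it reaches for---conditional laws of $\bar p$ along hyperplanes---is misdirected: conditioning on a hyperplane does not produce a logconcave law whose covariance is comparable to $C^{(2)}$, so its $(\tr A^k)^{1/k}$ does not line up with the $(\tr(C^{(2)})^k)^{1/k}$ you need to charge against.

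The paper's argument for this step is not an interpolation at all but a Cauchy--Schwarz bootstrap. Set $X:=\tr\bigl(C^{(2)}(H^{(w)})^2\bigr)$. Since $H^{(w)}=\E_u\bigl[(w^Tu)\,uu^T\bigr]$, one has $X=\E_u\bigl[(u^TC^{(2)}H^{(w)}u)(w^Tu)\bigr]\le\sqrt{\Var_u\bigl(u^TC^{(2)}H^{(w)}u\bigr)}$, using isotropy ($\E(w^Tu)^2=1$) and centering. Now push $u$ forward under $(C^{(2)})^{1/2}$ to a logconcave law $\tilde p$ whose covariance is \emph{exactly} $C^{(2)}$. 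The non-isotropic KLS hypothesis applied to $\tilde p$ gives $\psi_{\tilde p}^{-2}\lesssim(\tr(C^{(2)})^k)^{1/k}$, and Theorem \ref{thm:Poincare} for $\tilde p$, after computing $\E\norm{\nabla g}^2$ for the quadratic form $g$ in the new coordinates, gives $\Var\lesssim(\tr(C^{(2)})^k)^{1/k}\cdot X$. Hence $X\lesssim\sqrt{(\tr(C^{(2)})^k)^{1/k}\cdot X}$, i.e.\ $X\lesssim(\tr(C^{(2)})^k)^{1/k}$, with no loss. The ``flat'' and ``spectral'' bounds you tried to interpolate are downstream corollaries of this self-improving estimate, but interpolation discards information and cannot recover it; the anisotropic change of variables that matches the Poincar\'e covariance to $C^{(2)}$ is the missing idea.
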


\begin{proof}
Without loss of generality, we can assume $p$ is isotropic. Furthermore,
we can assume $B^{(1)}$ is diagonal. Let $\Delta^{(k)}=\E_{x\sim p}xx^{T}\cdot x^{T}e_{k}$.
Then, we have that 
\begin{align}
\E_{x,y\sim p}x^{T}B^{(1)}y\cdot x^{T}B^{(2)}y\cdot x^{T}B^{(3)}y & =\sum_{k}B_{kk}^{(1)}\tr(B^{(2)}\Delta^{(k)}B^{(3)}\Delta^{(k)})\nonumber \\
 & \leq\norm{B^{(3)}}_{\spe}\sum_{k}B_{kk}^{(1)}\tr(\Delta^{(k)}B^{(2)}\Delta^{(k)}).\label{eq:tensor_bound}
\end{align}
Note that
\begin{align*}
\tr(\Delta^{(k)}B^{(2)}\Delta^{(k)}) & =\E_{x\sim p}x^{T}B^{(2)}\Delta^{(k)}x\cdot x_{k}\\
 & \leq\sqrt{\E x_{k}^{2}}\sqrt{\Var_{x\sim p}x^{T}B^{(2)}\Delta^{(k)}x}\\
 & =\sqrt{\Var_{y\sim\tilde{p}}y^{T}\left(B^{(2)}\right)^{\frac{1}{2}}\Delta^{(k)}\left(B^{(2)}\right)^{-\frac{1}{2}}y}
\end{align*}
where $\tilde{p}$ is the distribution given by $\left(B^{(2)}\right)^{\frac{1}{2}}x$
where $x\sim p$. Note that $\tilde{p}$ is a logconcave distribution
with mean $0$ and covariance $B^{(2)}$. Theorem \ref{thm:Poincare}
together with the assumption that $\psi_{p}\gtrsim\left(\tr A^{k}\right)^{-1/2k}$,
we have
\[
\Var_{y\sim\tilde{p}}y^{T}\left(B^{(2)}\right)^{\frac{1}{2}}\Delta^{(k)}\left(B^{(2)}\right)^{-\frac{1}{2}}y\apprle\left(\tr(B^{(2)})^{k}\right)^{1/2k}\cdot\E_{y\sim\tilde{p}}\norm{\left(B^{(2)}\right)^{\frac{1}{2}}\Delta^{(k)}\left(B^{(2)}\right)^{-\frac{1}{2}}y}^{2}
\]
Hence, we have that
\begin{align*}
\tr(\Delta^{(k)}B^{(2)}\Delta^{(k)}) & \leq\left(\tr(B^{(2)})^{k}\right)^{1/2k}\sqrt{\E_{y\sim\tilde{p}}\norm{\left(B^{(2)}\right)^{\frac{1}{2}}\Delta^{(k)}\left(B^{(2)}\right)^{-\frac{1}{2}}y}^{2}}\\
 & =\left(\tr(B^{(2)})^{k}\right)^{1/2k}\sqrt{\E_{y\sim\tilde{p}}\tr\left(B^{(2)}\right)^{-\frac{1}{2}}\Delta^{(k)}B^{(2)}\Delta^{(k)}\left(B^{(2)}\right)^{-\frac{1}{2}}yy^{T}}\\
 & =\left(\tr(B^{(2)})^{k}\right)^{1/2k}\sqrt{\tr\Delta^{(k)}B^{(2)}\Delta^{(k)}}.
\end{align*}
Hence, we have that
\[
\tr(\Delta^{(k)}B^{(2)}\Delta^{(k)})\leq\left(\tr(B^{(2)})^{k}\right)^{1/k}.
\]
Putting it into (\ref{eq:tensor_bound}) gives that
\[
\left|\E_{x,y\sim p}x^{T}B^{(1)}y\cdot x^{T}B^{(2)}y\cdot x^{T}B^{(3)}y\right|\apprle\tr B^{(1)}\cdot\left(\tr(B^{(2)})^{k}\right)^{1/k}\cdot\norm{B^{(3)}}_{\spe}.
\]
\end{proof}
\begin{lem}
\label{lem:bound_on_alpha_beta}Under the assumption as Lemma \ref{lem:norm_At}.
For $q=\left\lceil k\right\rceil $, let $u_{t}=u(A_{t})$, we have
that
\[
du_{t}=\alpha_{t}^{T}dW_{t}+\beta_{t}dt
\]
with
\[
\norm{\alpha_{t}}_{2}\apprle u_{t}^{\frac{3}{2}}\quad\text{and}\quad\beta_{t}\apprle ku_{t}^{3}\Phi^{1/k}.
\]
\end{lem}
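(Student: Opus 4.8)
The plan is to feed the expressions for $\alpha_t$ and $\beta_t$ from Lemma \ref{lem:stoc_du} into the two estimates proved just above (Lemma \ref{lem:norm_expectation} and Lemma \ref{lem:tensor_bound}), and to control every occurrence of $(uI-A_t)^{-j}$ by $u_t$ via the defining equation $\tr((u_tI-A_t)^{-q})=\Phi$. Throughout I would write $M=u_tI-A_t\succeq 0$ and $p_t$ for the (shifted) distribution $\tilde p_t$, which is logconcave with mean $0$ and covariance $A_t$. The two recurring building blocks will be: (i) $\norm{M^{-j}}_{\spe}\le(u_t-\lambda_{\max}(A_t))^{-j}$, which is itself at most $u_t^{j/q}\Phi^{j/q}$ because $(u_t-\lambda_{\max})^{-q}\le\Phi$; and (ii) $\tr(A_t^{1/2}M^{-j}A_t^{1/2})=\tr(A_tM^{-j})\le u_t\tr(M^{-j})$, using $A_t\preceq u_tI$. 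In particular $\kappa_t=\tr(M^{-(q+1)})$ and $\tr(M^{-(q+2)})$ are bounded by $\kappa_t$ and $u_t\kappa_t$ up to the spectral factor, and $\tr(M^{-q})=\Phi$ exactly.

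For $\alpha_t$: we have $\alpha_t=\kappa_t^{-1}\,\E_{x\sim p_t}x^TM^{-(q+1)}x\cdot x$, which is exactly the vector bounded in Lemma \ref{lem:norm_expectation} with $C=M^{-(q+1)}$, giving $\norm{\alpha_t}_2\le\kappa_t^{-1}\cdot O(\norm{A_t}_{\spe}^{1/2})\,\tr(A_t^{1/2}M^{-(q+1)}A_t^{1/2})$. Since $\norm{A_t}_{\spe}\le u_t$ and $\tr(A_t^{1/2}M^{-(q+1)}A_t^{1/2})\le u_t\kappa_t$, this collapses to $\norm{\alpha_t}_2\lesssim u_t^{3/2}$ after the $\kappa_t$ cancels. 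For $\beta_t$: each of the three terms in the bound on $\beta_t/(q+1)$ is a trilinear expectation of the form $\E_{x,y\sim p_t}x^TB^{(1)}y\cdot x^TB^{(2)}y\cdot x^Ty$ (the last factor being $B^{(3)}=I$), times an explicit power of $\kappa_t^{-1}$ and possibly $\tr(M^{-(q+2)})$. I would apply Lemma \ref{lem:tensor_bound} to each, choosing the assignment of $B^{(1)},B^{(2)},B^{(3)}$ among $\{M^{-1},M^{-(q+1)},M^{-(q+2)},I\}$ so that the matrix placed in the $\norm{\cdot}_{\spe}$ slot is the one with the smallest inverse power (to minimize the spectral cost) and the matrix placed in the $(\tr(\cdot)^k)^{1/k}$ slot is chosen so that $\tr((A_t^{1/2}B^{(2)}A_t^{1/2})^k)^{1/k}$ is controllable; the natural choice is to put $M^{-(q+1)}$ (whose trace is $\kappa_t$) there when possible, using $q=\lceil k\rceil\ge k$ so that $\tr((A_tM^{-(q+1)})^k)^{1/k}\le \norm{A_tM^{-(q+1)}}_{\spe}^{(k-1)/k}\cdot\tr(A_tM^{-(q+1)})^{1/k}$ remains tame. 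After bounding each trace by $u_t\kappa_t$, $\Phi$, or $u_t^{j/q}\Phi^{j/q}$ and each spectral norm by the corresponding power of $u_t\Phi$, one counts the net powers: in every term the $\kappa_t$ factors must cancel exactly, leaving $u_t^3$, and the leftover fractional powers of $\Phi$ combine to $\Phi^{1/k}$, while the prefactor $q+1=\lceil k\rceil+1$ contributes the factor $k$.

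The main obstacle is the bookkeeping of the third term of $\beta_t$, which carries $\kappa_t^{-3}$ together with an extra $\tr(M^{-(q+2)})$ factor: one must verify that this extra trace, bounded by $u_t\kappa_t$ (or more carefully by $\norm{M^{-1}}_{\spe}\kappa_t\le u_t^{1/q}\Phi^{1/q}\kappa_t$), combines with the two $\kappa_t$'s produced by the trilinear estimate to cancel $\kappa_t^{-3}$ exactly, and that the residual $\Phi$-powers do not exceed $\Phi^{1/k}$. This requires being slightly careful about when to spend a $(u_t-\lambda_{\max})^{-1}\le u_t^{1/q}\Phi^{1/q}$ bound versus the cruder $\norm{M^{-1}}_{\spe}\le u_t^{1/q}\Phi^{1/q}$ — actually both give the same thing — and about the fact that $q=\lceil k\rceil$ may strictly exceed $k$, so $(\tr(\cdot)^k)^{1/k}$ in Lemma \ref{lem:tensor_bound} must be re-expressed in terms of $\tr(\cdot^q)$-type quantities by Hölder/interpolation. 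I would handle this by always reducing $(\tr(A_t^{1/2}B^{(2)}A_t^{1/2})^k)^{1/k}$ to a product of $\norm{A_t^{1/2}B^{(2)}A_t^{1/2}}_{\spe}^{1-1/k}$ and $\tr(A_t^{1/2}B^{(2)}A_t^{1/2})^{1/k}$, which is legitimate since the relevant matrices are positive semidefinite, and then both factors are of the controlled types. Once all three terms are shown to be $\lesssim k\,u_t^3\Phi^{1/k}$, the lemma follows.
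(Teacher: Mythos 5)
Your treatment of $\alpha_t$ and of the \emph{first} term of $\beta_t$ is correct and matches the paper: $\alpha_t$ is handled by Lemma \ref{lem:norm_expectation} with $C=M^{-(q+1)}$, and the first $\beta_t$-term is a genuine product of three bilinear forms $x^TM^{-1}y\cdot x^TM^{-(q+1)}y\cdot x^Ty$, so Lemma \ref{lem:tensor_bound} applies with $B^{(1)}=M^{-(q+1)}$, $B^{(2)}=M^{-1}$, $B^{(3)}=I$.

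The gap is your reading of the second and third terms. You assert that \emph{each} term of $\beta_t/(q+1)$ has the form $\E_{x,y}\,x^TB^{(1)}y\cdot x^TB^{(2)}y\cdot x^Ty$, but look again at Lemma \ref{lem:stoc_du}: the second term is $\E_{x,y}\,x^TM^{-(q+1)}x\cdot y^TM^{-(q+2)}y\cdot x^Ty$ and the third is $\tr(M^{-(q+2)})\cdot\E_{x,y}\,x^TM^{-(q+1)}x\cdot y^TM^{-(q+1)}y\cdot x^Ty$. These are \emph{quadratic} forms in $x$ alone and in $y$ alone, not bilinear forms $x^TBy$, so Lemma \ref{lem:tensor_bound} simply does not apply to them — there is no choice of $B^{(1)},B^{(2)},B^{(3)}$ that makes them fit. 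The correct (and easier) route, which is what the paper does, is to exploit that $x$ and $y$ are independent under $\tilde p_t\otimes\tilde p_t$: then $\E[x^TBx\cdot y^TCy\cdot x^Ty]=\bigl\langle \E[x^TBx\cdot x],\ \E[y^TCy\cdot y]\bigr\rangle$, and Cauchy--Schwarz followed by two applications of Lemma \ref{lem:norm_expectation} (the same lemma you already invoked for $\alpha_t$) gives $\lesssim u_t^3\,\tr(A_tM^{-(q+1)})\,\tr(A_tM^{-(q+2)})\le u_t^3\,\kappa_t\,\tr(M^{-(q+2)})$, and similarly for the third term. After dividing by the accompanying powers of $\kappa_t$, both reduce to $u_t^3\,\tr(M^{-(q+2)})/\kappa_t$, which is then bounded by $u_t^3\Phi^{1/k}$ via the elementary inequality $\tr(M^{-(q+2)})/\tr(M^{-(q+1)})\le \bigl(\tr(M^{-(q+2)})\bigr)^{1/(q+2)}$ together with $\tr(M^{-q})=\Phi$. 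Your Hölder/interpolation machinery for $(\tr(\cdot)^k)^{1/k}$ is only needed for the first term, and even there the paper's route is shorter: one directly has $(\tr M^{-k})^{1/k}\lesssim\Phi^{1/k}$ for $q=\lceil k\rceil$ since $M^{-k}=M^{q-k}M^{-q}$ with $\norm{M^{q-k}}_{\spe}\le u_t^{q-k}\le u_t$.

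So the plan as written does not go through for two of the three $\beta_t$-terms; replacing the misapplied Lemma \ref{lem:tensor_bound} by the independence-plus-Lemma-\ref{lem:norm_expectation} factorization repairs it.
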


\begin{proof}
For $\alpha_{t}$, we use Lemma \ref{lem:norm_expectation} and get
that
\begin{align*}
\norm{\E_{x\sim\tilde{p}_{t}}x^{T}(u_{t}I-A_{t})^{-(q+1)}x\cdot x}_{2} & \apprle\norm{A_{t}}_{\spe}^{1/2}\tr(A_{t}(u_{t}I-A_{t})^{-(q+1)})\\
 & \leq u_{t}^{\frac{3}{2}}\kappa_{t}.
\end{align*}

For $\beta_{t}$, we bound each term separately. For the first term,
Lemma \ref{lem:tensor_bound} shows that
\begin{align*}
 & \E_{x,y\sim\tilde{p}_{t}}x^{T}(u_{t}I-A_{t})^{-1}y\cdot x^{T}(u_{t}I-A_{t})^{-(q+1)}y\cdot x^{T}y\\
\apprle & \tr(A_{t}^{\frac{1}{2}}(u_{t}I-A_{t})^{-(q+1)}A_{t}^{\frac{1}{2}})\cdot\left(\tr(A_{t}^{\frac{1}{2}}(u_{t}I-A_{t})^{-1}A_{t}^{\frac{1}{2}})^{k}\right)^{1/k}\cdot\norm{A_{t}}_{\spe}\\
\leq & \norm{A_{t}}_{\spe}^{3}\cdot\kappa_{t}\cdot\left(\tr(u_{t}I-A_{t})^{-k}\right)^{1/k}\\
\leq & u_{t}^{3}\cdot\kappa_{t}\cdot\left(\tr(u_{t}I-A_{t})^{-k}\right)^{1/k}.
\end{align*}
For the second term, we use Lemma \ref{lem:norm_expectation} and
get that
\begin{align*}
 & \E_{x,y\sim\tilde{p}_{t}}x^{T}(u_{t}I-A_{t})^{-(q+1)}x\cdot y^{T}(u_{t}I-A_{t})^{-(q+2)}y\cdot x^{T}y\\
\leq & \norm{\E_{x\sim\tilde{p}_{t}}x^{T}(u_{t}I-A_{t})^{-(q+1)}x\cdot x}_{2}\norm{\E_{y\sim\tilde{p}_{t}}y^{T}(u_{t}I-A_{t})^{-(q+2)}y\cdot y}_{2}\\
\apprle & \norm{A_{t}}_{\spe}^{1/2}\tr(A_{t}(u_{t}I-A_{t})^{-(q+1)})\cdot\norm{A_{t}}_{\spe}^{1/2}\tr(A_{t}(u_{t}I-A_{t})^{-(q+2)})\\
\leq & u_{t}^{3}\cdot\kappa_{t}\cdot\tr((u_{t}I-A_{t})^{-(q+2)}).
\end{align*}
For the third term, the same calculation shows that 
\begin{align*}
 & \tr((u_{t}I-A_{t})^{-(q+2)})\cdot\E_{x,y\sim\tilde{p}_{t}}x^{T}(u_{t}I-A_{t})^{-(q+1)}x\cdot y^{T}(u_{t}I-A_{t})^{-(q+1)}y\cdot x^{T}y\\
\apprle & u_{t}^{3}\cdot\tr((u_{t}I-A_{t})^{-(q+2)})\cdot\kappa_{t}^{2}.
\end{align*}
Combining all three terms, we have 
\begin{align}
\beta_{t} & \apprle qu_{t}^{3}\left(\left(\tr(u_{t}I-A_{t})^{-k}\right)^{1/k}+\frac{\tr((u_{t}I-A_{t})^{-(q+2)})}{\kappa_{t}}\right).\label{eq:beta_t_bound}
\end{align}

Using that $q=\left\lceil k\right\rceil $, we have that 
\[
\beta_{t}\apprle ku_{t}^{3}\left(\Phi^{1/k}+\frac{\tr((u_{t}I-A_{t})^{-(\left\lceil k\right\rceil +2)})}{\tr((u_{t}I-A_{t})^{-(\left\lceil k\right\rceil +1)})}\right)\leq ku_{t}^{3}\left(\Phi^{1/k}+\tr((u_{t}I-A_{t})^{-(\left\lceil k\right\rceil +2)})^{\frac{1}{\left\lceil k\right\rceil +2}}\right)\apprle ku_{t}^{3}\Phi^{1/k}.
\]
\end{proof}
Now, we are ready to upper bound $\norm{A_{t}}_{\spe}$.
\begin{proof}[Proof of Lemma \ref{lem:norm_At}.]
 Consider the potential $\Psi_{t}=-(u_{t}+1)^{-2}$. Using Lemma
\ref{lem:bound_on_alpha_beta}, we have that
\begin{align*}
d\Psi_{t} & =2(u_{t}+1)^{-3}(\alpha_{t}^{T}dW_{t}+\beta_{t}dt)-3(u_{t}+1)^{-4}\norm{\alpha_{t}}^{2}dt\\
 & \defeq\gamma_{t}^{T}dW_{t}+\eta_{t}dt.
\end{align*}
Note that
\[
\norm{\gamma_{t}}_{2}^{2}=\norm{2(u_{t}+1)^{-3}\alpha_{t}}_{2}^{2}\leq O(1)(u_{t}+1)^{-6}u_{t}^{3}\leq c
\]
and
\[
\eta_{t}\leq2(u_{t}+1)^{-3}O(u_{t}^{3})k\Phi^{1/k}\leq ck\Phi^{1/k}
\]
for some universal constant $c$.

Let $Y_{t}$ be the process $dY_{t}=\gamma_{t}^{T}dW_{t}$. By Theorem
\ref{thm:Dubins}, there exists a Wiener process $\widetilde{W}_{t}$
such that $Y_{t}$ has the same distribution as $\widetilde{W}_{[Y]_{t}}$.
Using the reflection principle for 1-dimensional Brownian motion,
we have that
\[
\P(\max_{t\in[0,T]}Y_{t}\geq\gamma)\leq\P(\max_{t\in[0,cT]}\widetilde{W}_{t}\geq\gamma)=2\P(\widetilde{W}_{cT}\geq\gamma)\leq2\exp(-\frac{\gamma^{2}}{2cT}).
\]
Therefore, we have that
\[
\P(\max_{t\in[0,T]}\Psi_{t}-\Psi_{0}\geq ck\Phi^{1/k}T+\gamma)\leq2\exp(-\frac{\gamma^{2}}{2cT}).
\]
Set $\Phi=2^{-k}n$. At $t=0$, we have $\tr(u_{0}I-I)^{-k}=2^{-k}n$.
Therefore, $u_{0}=\frac{3}{2}$ and $\Psi_{0}=-\frac{4}{25}$. Using
the assumptions that $T\leq\frac{1}{25ck\Phi^{1/k}}$, we have that
\[
\P(\max_{t\in[0,T]}\left(-(u_{t}+1)^{-2}\right)\geq-\frac{3}{25}+\gamma)\leq2\exp(-\frac{\gamma^{2}}{2cT}).
\]
The result follows from setting $\gamma=\frac{1}{120}$.
\end{proof}

\subsection{Calculus with the BSS potential}

Here we prove Lemma \ref{lem:stoc_du}.
\begin{lem}
\label{lem:derivative_u}We have that 
\[
Du(X)[H]=\frac{\tr((uI-X)^{-(q+1)}H)}{\tr((uI-X)^{-(q+1)})}
\]
and
\begin{align*}
D^{2}u(X)[H_{1},H_{2}]= & \frac{\sum_{k=1}^{q+1}\tr((uI-X)^{-k}H_{1}(uI-X)^{-(q+2-k)}H_{2})}{\tr((uI-X)^{-(q+1)})}\\
 & -(q+1)\frac{\tr((uI-X)^{-(q+1)}H_{1})\tr((uI-X)^{-(q+2)}H_{2})}{\tr((uI-X)^{-(q+1)})^{2}}\\
 & -(q+1)\frac{\tr((uI-X)^{-(q+1)}H_{2})\tr((uI-X)^{-(q+2)}H_{1})}{\tr((uI-X)^{-(q+1)})^{2}}\\
 & +(q+1)\frac{\tr((uI-X)^{-(q+1)}H_{1})\tr((uI-X)^{-(q+1)}H_{2})\tr((uI-X)^{-(q+2)})}{\tr((uI-X)^{-(q+1)})^{3}}.
\end{align*}
\end{lem}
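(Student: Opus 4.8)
The plan is to derive both formulas by implicit differentiation of the defining relation $\tr((uI-X)^{-q})=\Phi$, combined with the elementary matrix-calculus identity
\[
\frac{d}{ds}\Big|_{s=0}(Y-sH)^{-m}=\sum_{j=1}^{m}Y^{-j}H\,Y^{-(m+1-j)},
\]
which follows from the $m=1$ case $\frac{d}{ds}\big|_{s=0}(Y-sH)^{-1}=Y^{-1}HY^{-1}$ and the Leibniz rule for the product $(Y-sH)^{-1}\cdots(Y-sH)^{-1}$. Throughout I write $Y=u(X)I-X$ and use cyclicity of the trace freely; the only subtlety to track is that $u$ depends on $X$, so every occurrence of $(uI-X)^{-m}$ contributes through both $u$ and $X$ when differentiated.

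For the first derivative I would differentiate $\tr((u(X)I-X)^{-q})=\Phi$ in a direction $H$. The chain rule splits this into the $u$-part $\partial_u\tr((uI-X)^{-q})\cdot Du(X)[H]=-q\,\tr((uI-X)^{-(q+1)})\cdot Du(X)[H]$ and the $X$-part, which by the identity above (with $m=q$) and cyclicity collapses the sum of $q$ equal terms into $q\,\tr((uI-X)^{-(q+1)}H)$. Setting the total to $0$ and solving gives the stated formula for $Du(X)[H]$.

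For the second derivative I would differentiate $Du(X)[H_1]=N(X)/M(X)$ in direction $H_2$ by the quotient rule, where $N(X)=\tr((uI-X)^{-(q+1)}H_1)$ and $M(X)=\tr((uI-X)^{-(q+1)})$, treating $H_1$ as a fixed direction. The key sub-computation is $D_{H_2}[(uI-X)^{-(q+1)}]$, which again splits into a $u$-part $-(q+1)(uI-X)^{-(q+2)}Du(X)[H_2]$ and an $X$-part $\sum_{j=1}^{q+1}(uI-X)^{-j}H_2(uI-X)^{-(q+2-j)}$. Contracting with $H_1$ inside the trace and reindexing $k=q+2-j$ turns the $X$-part into $\sum_{k=1}^{q+1}\tr((uI-X)^{-k}H_1(uI-X)^{-(q+2-k)}H_2)$ and leaves a term $-(q+1)Du(X)[H_2]\tr((uI-X)^{-(q+2)}H_1)$; the same computation with $H_1$ replaced by $I$ gives $D_{H_2}M=-(q+1)Du(X)[H_2]\tr((uI-X)^{-(q+2)})+(q+1)\tr((uI-X)^{-(q+2)}H_2)$. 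Assembling $D_{H_2}N/M - N\,D_{H_2}M/M^2$ and substituting $Du(X)[H_2]=\tr((uI-X)^{-(q+1)}H_2)/M$ produces exactly the four claimed terms: the sum over $k$, the two $(q+1)$-weighted "mixed" terms (which are the two orderings obtained by symmetrizing in $H_1,H_2$), and the final $(q+1)$-weighted cubic term.

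The computation is entirely mechanical and I do not anticipate any analytic obstacle once the inverse-power derivative identity is in hand; the only places warranting a careful check are the reindexing $\sum_{j=1}^{q+1}\to\sum_{k=1}^{q+1}$ in the $X$-part and the bookkeeping of signs coming from the $\partial_u$ contributions, which is what generates all the terms beyond the leading sum.
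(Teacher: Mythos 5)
Your proposal is correct and follows essentially the same route as the paper: implicitly differentiate the defining equation $\tr((uI-X)^{-q})=\Phi$ to get $Du$, then apply the quotient rule to $Du(X)[H_1]=N(X)/M(X)$ to get $D^2u$, substituting the formula for $Du(X)[H_2]$ at the end. The paper's proof is terse ("taking derivative again on both sides"), and what you have written supplies precisely the omitted intermediate computations — the inverse-power derivative identity, the split of $D_{H_2}Y$ into the $u$-part and $X$-part, the reindexing, and the final substitution — all of which check out.
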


\begin{proof}
Taking derivative of the equation (\ref{eq:def_u}), we have 
\[
-q\tr((uI-X)^{-(q+1)})\cdot Du(X)[H]+q\tr((uI-X)^{-(q+1)}H)=0.
\]
Therefore,
\[
Du(X)[H]=\frac{\tr((uI-X)^{-(q+1)}H)}{\tr((uI-X)^{-(q+1)})}.
\]
Taking derivative again on both sides,
\begin{align*}
D^{2}u(X)[H_{1},H_{2}]= & \frac{\sum_{k=1}^{q+1}\tr((uI-X)^{-k}H_{1}(uI-X)^{-(q+2-k)}H_{2})}{\tr((uI-X)^{-(q+1)})}\\
 & -\frac{(q+1)\tr((uI-X)^{-(q+1)}H_{1})\tr((uI-X)^{-(q+2)}H_{2})}{\tr((uI-X)^{-(q+1)})^{2}}\\
 & -(q+1)\frac{\tr((uI-X)^{-(q+2)}H_{1})}{\tr((uI-X)^{-(q+1)})}Du(X)[H_{2}]\\
 & +(q+1)\frac{\tr((uI-X)^{-(q+1)}H_{1})\tr((uI-X)^{-(q+2)})}{\tr((uI-X)^{-(q+1)})^{2}}Du(X)[H_{2}].
\end{align*}
Substituting the formula of $Du(X)[H_{2}]$ and organizing the term,
we have the result.
\end{proof}
To simplify the first term in the Hessian, we need the following Lemma:
\begin{lem}[\cite{Eldan2013}]
\label{lem:tensor_shift}For any positive definite matrix $A$, symmetric
matrix $\Delta$ and any $\alpha,\beta\geq0$, we have that
\[
\tr(A^{\alpha}\Delta A^{\beta}\Delta)\leq\tr(A^{\alpha+\beta}\Delta^{2}).
\]
\end{lem}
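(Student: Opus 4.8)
The plan is to reduce the matrix inequality to an elementary inequality about the eigenvalues of $A$. Since $A$ is positive definite and symmetric, diagonalize it: write $A = U\Lambda U^T$ with $\Lambda = \diag(\lambda_1,\dots,\lambda_n)$, $\lambda_i > 0$, and set $\widetilde\Delta = U^T \Delta U$, which is again symmetric. Because trace is unitarily invariant and powers of $A$ transform as $A^\alpha = U\Lambda^\alpha U^T$, both sides of the claimed inequality are unchanged when we replace $A$ by $\Lambda$ and $\Delta$ by $\widetilde\Delta$. So it suffices to prove $\tr(\Lambda^\alpha \widetilde\Delta \Lambda^\beta \widetilde\Delta) \le \tr(\Lambda^{\alpha+\beta}\widetilde\Delta^2)$.

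Next I would expand both traces in coordinates. Writing $\widetilde\Delta = (d_{ij})$ with $d_{ij} = d_{ji}$, the left-hand side is $\sum_{i,j} \lambda_i^\alpha \lambda_j^\beta d_{ij} d_{ji} = \sum_{i,j} \lambda_i^\alpha \lambda_j^\beta d_{ij}^2$, while the right-hand side is $\sum_{i,j} \lambda_i^{\alpha+\beta} d_{ij} d_{ji} = \sum_{i,j}\lambda_i^{\alpha+\beta} d_{ij}^2$. By symmetry $d_{ij}^2 = d_{ji}^2$, so grouping the $(i,j)$ and $(j,i)$ terms, the inequality reduces to showing that for every unordered pair $\{i,j\}$ (including $i=j$),
\[
\bigl(\lambda_i^\alpha \lambda_j^\beta + \lambda_j^\alpha \lambda_i^\beta\bigr)\, d_{ij}^2 \;\le\; \bigl(\lambda_i^{\alpha+\beta} + \lambda_j^{\alpha+\beta}\bigr)\, d_{ij}^2 .
\]
Thus everything comes down to the scalar inequality $\lambda_i^\alpha \lambda_j^\beta + \lambda_j^\alpha \lambda_i^\beta \le \lambda_i^{\alpha+\beta} + \lambda_j^{\alpha+\beta}$ for positive reals and exponents $\alpha,\beta \ge 0$. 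This is a standard rearrangement-type fact: the difference equals $(\lambda_i^\alpha - \lambda_j^\alpha)(\lambda_i^\beta - \lambda_j^\beta) \ge 0$, since $t \mapsto t^\alpha$ and $t\mapsto t^\beta$ are both monotone nondecreasing on $(0,\infty)$, so the two factors have the same sign. Summing over all pairs and re-indexing gives the lemma.

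The only mild subtlety — really the one point to be careful about — is the symmetrization step: one must notice that the cross terms in $\tr(\Lambda^\alpha\widetilde\Delta\Lambda^\beta\widetilde\Delta)$ pair up as $\lambda_i^\alpha\lambda_j^\beta d_{ij}^2$ with $i\ne j$ appearing together with $\lambda_j^\alpha\lambda_i^\beta d_{ji}^2$, so that the asymmetry of the exponents is averaged out before comparing with the symmetric right-hand side; the diagonal terms $i=j$ are trivially equal on both sides. Once that bookkeeping is in place, the argument is just the factorization $(\lambda_i^\alpha-\lambda_j^\alpha)(\lambda_i^\beta-\lambda_j^\beta)\ge 0$, and there is no real obstacle. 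Note we do not even need $A$ to be positive \emph{definite} — positive semidefinite suffices, as long as one adopts the convention $0^0 = 1$ when an exponent vanishes, or simply notes the inequality is continuous in $\Lambda$ and passes to the limit.
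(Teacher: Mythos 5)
Your proof is correct and follows essentially the same route as the paper: diagonalize $A$, expand both traces in coordinates, and reduce to a scalar inequality on pairs of eigenvalues. The only (minor) difference is in the scalar step: you symmetrize over $(i,j)$ and $(j,i)$ and use the Chebyshev-type factorization $(\lambda_i^\alpha-\lambda_j^\alpha)(\lambda_i^\beta-\lambda_j^\beta)\geq 0$, while the paper applies the weighted AM--GM bound $A_{ii}^\alpha A_{jj}^\beta\leq\frac{\alpha}{\alpha+\beta}A_{ii}^{\alpha+\beta}+\frac{\beta}{\alpha+\beta}A_{jj}^{\alpha+\beta}$ term by term and lets the symmetry of $\Delta^2$ do the work in the final sum; both are elementary and equally valid.
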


\begin{proof}
Without loss of generality, we can assume $A$ is diagonal by rotating
the space. Hence, we have that 
\begin{align*}
\tr(A^{\alpha}\Delta A^{\beta}\Delta) & =\sum_{i,j}A_{ii}^{\alpha}A_{jj}^{\beta}\Delta_{ij}^{2}\\
 & \leq\sum_{i,j}\left(\frac{\alpha}{\alpha+\beta}A_{ii}^{\alpha+\beta}+\frac{\beta}{\alpha+\beta}A_{jj}^{\alpha+\beta}\right)\Delta_{ij}^{2}\\
 & =\frac{\alpha}{\alpha+\beta}\sum_{i,j}A_{ii}^{\alpha+\beta}\Delta_{ij}^{2}+\frac{\beta}{\alpha+\beta}\sum_{i,j}A_{jj}^{\alpha+\beta}\Delta_{ij}^{2}\\
 & =\tr(A^{\alpha+\beta}\Delta^{2}).
\end{align*}
\end{proof}
Now, we are already to upper bound $du(A_{t})$.
\begin{proof}[Proof of Lemma \ref{lem:stoc_du}]
Using Lemma \ref{lem:derivative_u} and Itô's formula, we have that
\begin{align*}
du(A_{t})= & \frac{\tr((uI-A_{t})^{-(q+1)}dA_{t})}{\tr((uI-A_{t})^{-(q+1)})}\\
 & +\frac{1}{2}\sum_{ijkl}\frac{\sum_{k=1}^{q+1}\tr((uI-A_{t})^{-k}e_{ij}(uI-A_{t})^{-(q+2-k)}e_{kl})}{\tr((uI-A_{t})^{-(q+1)})}d[A_{ij},A_{kl}]_{t}\\
 & -\frac{q+1}{2}\sum_{ijkl}\frac{\tr((uI-A_{t})^{-(q+1)}e_{ij})\tr((uI-A_{t})^{-(q+2)}e_{kl})}{\tr((uI-A_{t})^{-(q+1)})^{2}}d[A_{ij},A_{kl}]_{t}\\
 & -\frac{q+1}{2}\sum_{ijkl}\frac{\tr((uI-A_{t})^{-(q+1)}e_{kl})\tr((uI-A_{t})^{-(q+2)}e_{ij})}{\tr((uI-A_{t})^{-(q+1)})^{2}}d[A_{ij},A_{kl}]_{t}\\
 & +\frac{q+1}{2}\sum_{ijkl}\frac{\tr((uI-A_{t})^{-(q+1)}e_{ij})\tr((uI-A_{t})^{-(q+1)}e_{kl})\tr((uI-A_{t})^{-(q+2)})}{\tr((uI-A_{t})^{-(q+1)})^{3}}d[A_{ij},A_{kl}]_{t}.
\end{align*}

For brevity, we let $\tilde{p}_{t}$ be the translation of $p_{t}$
that has mean $0$, i.e. $\tilde{p}_{t}(x)=p_{t}(x+\mu_{t})$. Let
$\Delta^{(z)}=\E_{x\sim\tilde{p}_{t}}xx^{T}x_{z}$. Then, Lemma \ref{lem:dp_dA}
shows that $dA_{t}=\sum_{z}\Delta_{z}dW_{t,z}-A_{t}^{2}dt$ where
$W_{t,z}$ is the $z^{th}$ coordinate of $W_{t}$. Therefore, 
\begin{equation}
d[A_{ij},A_{kl}]_{t}=\sum_{z}\Delta_{ij}^{(z)}\Delta_{kl}^{(z)}dt.\label{eq:dA_bracket}
\end{equation}
Using the formula for $dA_{t}$ (\ref{eq:dA}) and $d[A_{ij},A_{kl}]_{t}$
(\ref{eq:dA_bracket}), we have that
\begin{align*}
du(A_{t})= & \frac{\tr\left((uI-A_{t})^{-(q+1)}\left(\E_{x\sim\tilde{p}_{t}}xx^{T}x^{T}dW_{t}-A_{t}^{2}dt\right)\right)}{\tr((uI-A_{t})^{-(q+1)})}\\
 & +\frac{1}{2}\sum_{z}\frac{\sum_{k=1}^{q+1}\tr((uI-A_{t})^{-k}\Delta^{(z)}(uI-A_{t})^{-(q+2-k)}\Delta^{(z)})}{\tr((uI-A_{t})^{-(q+1)})}dt\\
 & -(q+1)\sum_{z}\frac{\tr((uI-A_{t})^{-(q+1)}\Delta^{(z)})\tr((uI-A_{t})^{-(q+2)}\Delta^{(z)})}{\tr((uI-A_{t})^{-(q+1)})^{2}}dt\\
 & +\frac{q+1}{2}\sum_{z}\frac{\tr((uI-A_{t})^{-(q+1)}\Delta^{(z)})\tr((uI-A_{t})^{-(q+1)}\Delta^{(z)})\tr((uI-A_{t})^{-(q+2)})}{\tr((uI-A_{t})^{-(q+1)})^{3}}dt.
\end{align*}
Using Lemma \ref{lem:tensor_shift}, 
\[
\tr((uI-A_{t})^{-k}\Delta^{(z)}(uI-A_{t})^{-(q+2-k)}\Delta^{(z)})\leq\tr((uI-A_{t})^{-1}\Delta^{(z)}(uI-A_{t})^{-(q+1)}\Delta^{(z)})
\]
for all $1\leq k\leq q+1$. 

Let $\kappa_{t}=\tr((uI-A_{t})^{-(q+1)})$, then
\begin{align*}
du(A_{t})\leq & \frac{1}{\kappa_{t}}\E_{x\sim\tilde{p}_{t}}x^{T}(uI-A_{t})^{-(q+1)}xx^{T}dW_{t}\\
 & +\frac{q+1}{2\kappa_{t}}\E_{x,y\sim\tilde{p}_{t}}\sum_{z}\tr((uI-A_{t})^{-1}xx^{T}x_{z}(uI-A_{t})^{-(q+1)}yy^{T}y_{z})dt\\
 & -\frac{q+1}{\kappa_{t}^{2}}\E_{x,y\sim\tilde{p}_{t}}\sum_{z}\tr((uI-A_{t})^{-(q+1)}xx^{T}x_{z})\tr((uI-A_{t})^{-(q+2)}yy^{T}y_{z})dt\\
 & +\frac{q+1}{2\kappa_{t}^{3}}\E_{x,y\sim\tilde{p}_{t}}\sum_{z}\tr((uI-A_{t})^{-(q+1)}xx^{T}x_{z})\tr((uI-A_{t})^{-(q+1)}yy^{T}y_{z})\tr((uI-A_{t})^{-(q+2)})dt.
\end{align*}
Rearranging the terms, we have the result.
\end{proof}

\subsection{Bounding the size of any initial set}

Fix any set $E\subset\Rn$ and define $g_{t}=p_{t}(E)$.
\begin{lem}
\label{lem:volume} The random variable $g_{t}$ is a martingale satisfying
\[
d[g_{t}]_{t}\leq D^{2}g_{t}^{2}dt
\]
and
\[
d[g_{t}]_{t}\leq30\norm{A_{t}}_{\spe}\cdot g_{t}^{2}\log^{2}\left(\frac{1}{g_{t}}\right)dt.
\]
\end{lem}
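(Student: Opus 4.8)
\textbf{Proof proposal for Lemma \ref{lem:volume}.}

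The plan is to first recall from Lemma \ref{lem:dp_dA} that $dp_t(x) = (x-\mu_t)^T dW_t \cdot p_t(x)$, so integrating over $E$ gives
\[
dg_t = \left(\int_E (x-\mu_t) p_t(x)\,dx\right)^T dW_t,
\]
which is a pure martingale (no drift), establishing the first claim. Writing $v_t = \int_E (x-\mu_t) p_t(x)\,dx$ for the vector multiplying $dW_t$, the quadratic variation is $d[g_t]_t = \norm{v_t}_2^2\,dt$, so both inequalities reduce to bounding $\norm{v_t}_2$. The key identity is that $v_t = g_t \cdot \E_{x\sim p_t}\!\left[(x-\mu_t)\,\mathbf{1}_{x\in E}\right] = g_t (m_t - \mu_t)$ where $m_t = \frac{1}{g_t}\int_E x\, p_t(x)\,dx$ is the barycenter of $E$ under $p_t$; equivalently $v_t = \E_{x\sim p_t}[(x-\mu_t)(\mathbf{1}_{x\in E} - g_t)]$, which makes it a covariance-type quantity between the linear function $x-\mu_t$ and the indicator.

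For the first bound, I would observe that $p_t$ is logconcave with support contained in that of $p$, hence of diameter $\le D$, so both $m_t$ and $\mu_t$ lie in this set and $\norm{m_t-\mu_t}_2 \le D$. Therefore $\norm{v_t}_2 = g_t\norm{m_t-\mu_t}_2 \le D g_t$, giving $d[g_t]_t \le D^2 g_t^2\,dt$. For the second bound, the point is to replace the crude diameter estimate with a Cauchy--Schwarz / Chebyshev argument that exploits concentration of the logconcave measure $p_t$. Write $\norm{v_t}_2 = \sup_{\norm\theta=1} \E_{x\sim p_t}\big[\theta^T(x-\mu_t)(\mathbf{1}_{x\in E}-g_t)\big]$. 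Fixing the maximizing direction $\theta$ and letting $Z = \theta^T(x-\mu_t)$, a mean-zero random variable with variance $\theta^T A_t \theta \le \norm{A_t}_{\spe}$, we have $\norm{v_t}_2 \le \E|Z\,(\mathbf{1}_{x\in E}-g_t)| \le \E\big[|Z|\,\mathbf{1}_{x\in E}\big] + g_t\,\E|Z|$. The second term is at most $g_t\sqrt{\norm{A_t}_{\spe}}$. For the first term one truncates: $\E[|Z|\mathbf{1}_{x\in E}] \le R\,g_t + \E[|Z|\mathbf{1}_{|Z|\ge R}]$, and the one-dimensional logconcave tail bound gives $\P(|Z|\ge R) \le e^{-cR/\sqrt{\norm{A_t}_{\spe}}}$ with a matching bound on the truncated first moment. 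Choosing $R \asymp \sqrt{\norm{A_t}_{\spe}}\log(1/g_t)$ balances the two contributions so that $\E[|Z|\mathbf{1}_{|Z|\ge R}] \lesssim g_t \sqrt{\norm{A_t}_{\spe}}$, and then $\norm{v_t}_2 \lesssim \sqrt{\norm{A_t}_{\spe}}\, g_t \log(1/g_t)$. Squaring yields $d[g_t]_t \lesssim \norm{A_t}_{\spe}\, g_t^2 \log^2(1/g_t)\,dt$; tracking the constants should give the stated $30$.

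The main obstacle is the second bound: one must correctly set up the extremal characterization of $\norm{v_t}_2$ as a one-dimensional quantity and then apply the right logconcave tail estimate for the marginal $\theta^T(x-\mu_t)$, being careful that the relevant parameter is the standard deviation $\sqrt{\theta^T A_t\theta}$ rather than the diameter, and that the truncation level $R$ is chosen as $\Theta(\sqrt{\norm{A_t}_{\spe}}\log(1/g_t))$ so both the ``$R g_t$'' term and the tail term come out as $O(g_t\sqrt{\norm{A_t}_{\spe}})$. The first bound is essentially immediate from the diameter of the support, and the martingale property is a direct consequence of Lemma \ref{lem:dp_dA}.
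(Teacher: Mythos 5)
Your proof is correct and follows the paper's high-level structure (martingale $\Rightarrow$ $dg_t = \langle v_t, dW_t\rangle$, then two bounds on $\norm{v_t}_2$), but the second bound takes a genuinely different route. The paper applies H\"older's inequality directly to $\int_E |(x-\mu_t)^T\zeta|\,p_t\,dx \le \bigl(\int |(x-\mu_t)^T\zeta|^k p_t\,dx\bigr)^{1/k} g_t^{1-1/k}$, invokes the logconcave moment bound (Lemma \ref{lem:lcmom}) to get the $k$-th root of the moment bounded by $2k\norm{A_t}_{\spe}^{1/2}$, and then simply sets $k = \log(1/g_t)$; this makes $g_t^{-1/k} = e$ and gives $\norm{v_t}_2 \le 2e\,\norm{A_t}_{\spe}^{1/2} g_t \log(1/g_t)$, whence $(2e)^2 < 30$. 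You instead truncate the one-dimensional marginal at a level $R$ and control the tail integral $\E[|Z|\mathbf{1}_{|Z|\ge R}]$ via the sub-exponential tail of a mean-zero logconcave variable, then optimize $R \asymp \sqrt{\norm{A_t}_{\spe}}\,\log(1/g_t)$. These are the moment and tail sides of the same sub-exponential concentration fact, so both are valid; the H\"older/moment route is tighter to write out and automatically delivers the explicit constant, whereas your truncation argument would require carrying the constant of the 1-D tail bound through the calculation (which you flag but don't do). One small slip in your recap: after choosing $R$, the $Rg_t$ term is $O\bigl(g_t\sqrt{\norm{A_t}_{\spe}}\,\log(1/g_t)\bigr)$, not $O\bigl(g_t\sqrt{\norm{A_t}_{\spe}}\bigr)$ --- it is the term that actually carries the logarithm --- though your earlier, more detailed passage states this correctly.
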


\begin{proof}
Note that
\begin{align*}
dg_{t} & =\left\langle \int_{E}(x-\mu_{t})p_{t}(x)dx,dW_{t}\right\rangle .
\end{align*}
Therefore, we have that
\[
d[g_{t}]_{t}=\norm{\int_{E}(x-\mu_{t})p_{t}(x)dx}_{2}^{2}dt.
\]
We bound the norm in two different ways. On one hand, we note that
\begin{equation}
\norm{\int_{E}(x-\mu_{t})p_{t}(x)dx}_{2}\leq\sup\norm{x-\mu_{t}}_{2}\left(\int_{E}p_{t}(x)dx\right)=\sup\norm{x-\mu_{t}}_{2}g_{t}\leq D\cdot g_{t}\label{eq:gt1}
\end{equation}
On the other hand, for any $k\geq1$, we have that 
\begin{align}
\norm{\int_{E}(x-\mu_{t})p_{t}(x)dx}_{2} & =\max_{\norm{\zeta}_{2}=1}\int_{E}(x-\mu_{t})^{T}\zeta\cdot p_{t}(x)dx\nonumber \\
 & \leq\max_{\norm{\zeta}_{2}=1}\left(\int_{E}\left|(x-\mu_{t})^{T}\zeta\right|^{k}\cdot p_{t}(x)dx\right)^{\frac{1}{k}}\left(\int_{E}p_{t}(x)dx\right)^{1-\frac{1}{k}}\nonumber \\
 & \leq2k\norm{A_{t}}_{\spe}^{1/2}\cdot g_{t}{}^{1-\frac{1}{k}}\label{eq:gt2}
\end{align}
where we used Lemma \ref{lem:lcmom} at the end. Setting $k=\log(\frac{1}{g_{t}})$,
we have the result.
\end{proof}
Using this, we can bound how fast $\log\frac{1}{g_{t}}$ changes.
\begin{lem}
\label{lem:set_large} For any $T\geq0$ and $\gamma\geq0$, we have
that
\[
\P\left(\text{ for all }0\leq t\leq T:\,\log\frac{1}{g_{0}}+\frac{1}{2}D^{2}t+\gamma\geq\log\frac{1}{g_{t}}\geq\log\frac{1}{g_{0}}-\gamma\right)\geq1-4\exp(-\frac{\gamma^{2}}{2TD^{2}}).
\]
\end{lem}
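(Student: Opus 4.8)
The plan is to track the process $L_t \defeq \log(1/g_t)$ and show it stays within $\pm\gamma$ of its starting value (up to a deterministic drift $\frac12 D^2 t$) with the claimed probability. Since $g_t$ is a positive martingale by Lemma \ref{lem:volume}, $L_t = -\log g_t$ is a submartingale, and It\^o's formula (Lemma \ref{lem:Ito}) gives
\[
dL_t = -\frac{1}{g_t}dg_t + \frac{1}{2g_t^2}d[g_t]_t.
\]
The martingale part is $dM_t \defeq -\frac{1}{g_t}dg_t$, which has quadratic variation $d[M]_t = \frac{1}{g_t^2}d[g_t]_t \le D^2\,dt$ by the first bound in Lemma \ref{lem:volume}; the drift part is $\frac{1}{2g_t^2}d[g_t]_t$, which lies in $[0, \frac12 D^2]\,dt$ by the same bound. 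So $L_t - L_0 = M_t + (\text{drift})$ with $0 \le \text{drift} \le \frac12 D^2 t$ and $[M]_t \le D^2 t$.

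Next I would control $M_t$ using the Dambis--Dubins--Schwarz theorem (Theorem \ref{thm:Dubins}): $M_t = \widetilde W_{[M]_t}$ for some Wiener process $\widetilde W$, and since $[M]_t \le D^2 T =: \tau$ for $t \le T$, both $\max_{t\le T} M_t$ and $\max_{t \le T}(-M_t)$ are dominated by $\max_{s \le \tau} |\widetilde W_s|$. The reflection principle (Lemma \ref{lem:reflection}), applied to $\widetilde W$ and to $-\widetilde W$, gives
\[
\P\Big(\max_{s\le\tau}|\widetilde W_s| \ge \gamma\Big) \le 4\,\P(\widetilde W_\tau \ge \gamma) \le 4\exp\Big(-\frac{\gamma^2}{2\tau}\Big) = 4\exp\Big(-\frac{\gamma^2}{2TD^2}\Big).
\]
On the complementary event, $|M_t| < \gamma$ for all $t \le T$, hence $L_t - L_0 \le M_t + \frac12 D^2 t < \frac12 D^2 t + \gamma$ and $L_t - L_0 \ge M_t \ge -\gamma$. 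Rewriting $L_t = \log(1/g_t)$ and $L_0 = \log(1/g_0)$ yields exactly the two-sided bound in the statement, completing the proof.

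There is really no serious obstacle here; the lemma is essentially a packaging of the estimates in Lemma \ref{lem:volume} through the standard time-change plus reflection argument. The only point requiring a little care is the time change: $[M]_t$ is itself random, so one must note that the bound $[M]_t \le D^2 t$ holds pathwise (it follows from the pathwise inequality $d[g]_t \le D^2 g_t^2\,dt$), which legitimately lets one replace the random time horizon $[M]_T$ by the deterministic upper bound $D^2 T$ inside the reflection estimate (the maximum of a Wiener process over $[0,s]$ is monotone in $s$). A secondary bookkeeping point is the union-bound constant: applying reflection separately to the upper and lower excursions of $M$ costs a factor $2 \times 2 = 4$, which matches the $4$ in the statement, so no refinement of the one-sided drift is needed.
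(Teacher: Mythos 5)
Your proposal is correct and follows essentially the same route as the paper: decompose $\log(1/g_t)$ via It\^o's formula into a local martingale plus a nonnegative drift bounded by $\tfrac12 D^2 t$, then control the martingale part by the Dambis--Dubins--Schwarz time change and the reflection principle, using the pathwise bound $d[g]_t \le D^2 g_t^2\,dt$ from Lemma~\ref{lem:volume}. The only cosmetic difference is that the paper writes $dg_t = g_t\alpha_t^T dW_t$ with $\norm{\alpha_t}_2 \le D$ explicitly rather than working directly with $d[g]_t$, and you are slightly more explicit about why $[M]_t \le D^2 T$ justifies replacing the random time horizon by a deterministic one.
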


\begin{proof}
Since $dg_{t}=g_{t}\alpha_{t}^{T}dW_{t}$ for some $\norm{\alpha_{t}}_{2}\leq D$
(from (\ref{eq:gt1}) in Lemma \ref{lem:volume}), using Itô's formula
(Lemma (\ref{lem:Ito})) we have that
\begin{align*}
d\log\frac{1}{g_{t}} & =-\frac{dg_{t}}{g_{t}}+\frac{1}{2}\frac{d[g_{t}]_{t}}{g_{t}^{2}}\\
 & =-\alpha_{t}^{T}dW_{t}+\frac{1}{2}\norm{\alpha_{t}}^{2}dt.
\end{align*}
Let $Y_{t}$ be the process $dY_{t}=\alpha_{t}^{T}dW_{t}$. By Theorem
\ref{thm:Dubins} and the reflection principle, we have that
\[
\P(\max_{t\in[0,T]}\left|Y_{t}\right|\geq\gamma)\leq4\exp(-\frac{\gamma^{2}}{2TD^{2}}).
\]
\end{proof}
Now, we bound $\E g_{t}\sqrt{\log\frac{1}{g_{t}}}$. This is the main
result of this section.
\begin{lem}
\label{lem:g_sqrt_g}Assume that $n\geq10$. There is some universal
constant $c\geq0$ such that for any measurable subset $E$ such that
$p_{0}(E)\leq c$ and any $T$ such that 
\[
0\leq T\leq c\cdot\max\left(\frac{1}{D^{2}}\log\frac{1}{p_{0}(E)},\frac{1}{\log\frac{1}{p_{0}(E)}+D}\right),
\]
we have that
\[
\E\left(p_{T}(E)\sqrt{\log\frac{1}{p_{T}(E)}}1_{p_{T}(E)\leq\frac{1}{2}}\right)\geq\frac{1}{5}p_{0}(E)\sqrt{\log\frac{1}{p_{0}(E)}}.
\]
\end{lem}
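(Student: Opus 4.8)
The plan is to track the quantity $h_t \defeq g_t\sqrt{\log\frac{1}{g_t}}$ where $g_t = p_t(E)$, and show that on a high-probability event its expectation does not drop below a constant fraction of $h_0$ by time $T$. First I would set up the It\^o expansion of $\phi(g) = g\sqrt{\log\frac1g}$. Since $dg_t = g_t\alpha_t^T dW_t$ with $\norm{\alpha_t}_2 \le D$ by \eqref{eq:gt1}, and also $d[g_t]_t \le 30\norm{A_t}_{\spe} g_t^2\log^2(1/g_t)\,dt$ by Lemma \ref{lem:volume}, It\^o's formula (Lemma \ref{lem:Ito}) gives $d\phi(g_t) = \phi'(g_t)\,dg_t + \tfrac12\phi''(g_t)\,d[g_t]_t$. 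The martingale part integrates to zero in expectation, so the drift is governed by $\tfrac12\phi''(g_t)\,d[g_t]_t$. A direct computation shows $\phi'(g) = \sqrt{\log\frac1g} - \frac{1}{2\sqrt{\log\frac1g}}$ and $\phi''(g) = -\frac{1}{g}\cdot\frac{1}{2\sqrt{\log\frac1g}}\big(1 + \frac{1}{2\log\frac1g}\big)\approx -\frac{1}{2g\sqrt{\log\frac1g}}$ for small $g$; the key point is that $\phi''(g)$ is negative and $|\phi''(g)| \asymp \frac{1}{g\sqrt{\log\frac1g}}$. Combining with the bound on $d[g_t]_t$, the drift per unit time is at least $-O(\norm{A_t}_{\spe})\cdot g_t\log^{3/2}(1/g_t)$, i.e. $\frac{d}{dt}\E\phi(g_t) \gtrsim -\E[\norm{A_t}_{\spe} g_t\log^{3/2}(1/g_t)]$.

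The main obstacle is that this drift bound contains an extra factor of $\log(1/g_t)$ compared to $\phi(g_t)$ itself, so naively the differential inequality would be $\frac{d}{dt}\E\phi(g_t)\gtrsim -\log(1/g_t)\cdot\E\phi(g_t)$, which only controls things for $t \lesssim 1/\log(1/g_0)$ — too short when $D$ is large. The fix, which is why the hypothesis on $T$ has the form $\max(\frac1{D^2}\log\frac1{p_0(E)}, \frac{1}{\log\frac1{p_0(E)}+D})$, is to also use the \emph{first} bound $d[g_t]_t \le D^2 g_t^2\,dt$, giving drift $\gtrsim -D^2 g_t\sqrt{\log(1/g_t)} = -D^2\phi(g_t)$, which is good for $t\lesssim 1/D^2$. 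So I would interleave the two estimates: use whichever of $\{D^2,\ \norm{A_t}_{\spe}\log(1/g_t)\}$ is smaller. Since $\norm{A_t}_{\spe}\le 2$ on the good event from Lemma \ref{lem:norm_At}, the effective rate is $\min(D^2, 2\log(1/g_t))$, and since by Lemma \ref{lem:set_large} $\log(1/g_t)$ stays within an additive $O(\frac12 D^2 t + \gamma)$ of $\log(1/g_0)$, this rate is $O(\min(D^2, \log(1/g_0)) + D^2 t)$ throughout $[0,T]$, which integrates to a constant over the allowed range of $T$.

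Concretely, the steps I would carry out are: (1) condition on the event $\mathcal{E}$ that $\norm{A_t}_{\spe}\le 2$ for all $t\le T$ (probability $\ge 1 - 2e^{-1/(cT)}$ by Lemma \ref{lem:norm_At}, using $T$ small enough since $D \le O(\sqrt n)$ forces the relevant $k$-regime) intersected with the event from Lemma \ref{lem:set_large} that $\log\frac1{g_t}$ stays in $[\log\frac1{g_0}-\gamma,\ \log\frac1{g_0}+\frac12 D^2 t + \gamma]$, for a suitable $\gamma = \Theta(\log\frac1{g_0})$ or $\gamma=\Theta(1)$ chosen so this event also has probability close to $1$; (2) on this event, bound $-\phi''(g_t)\,d[g_t]_t \le C(\min(D^2,\log\frac1{g_0}) + D^2 t)\,\phi(g_t)\,dt$; (3) write the Doob decomposition $\phi(g_t) = M_t + V_t$ with $M_t$ a martingale and $dV_t = \tfrac12\phi''(g_t)d[g_t]_t \le 0$, and observe $\E[\phi(g_T)1_{\mathcal E}]\ge \E[\phi(g_0)1_{\mathcal E}] + \E[(V_T-V_0)1_{\mathcal E}] - (\text{error from }M_t\text{ on }\mathcal E^c)$; more cleanly, use Gr\"onwall on $f(t)\defeq \E[\phi(g_t)1_{\mathcal{E}_{[0,t]}}]$ to get $f(T) \ge f(0)\exp(-\int_0^T C(\cdots)\,dt) \ge \phi(g_0)\cdot e^{-O(1)}$ once $T$ is below the stated threshold; (4) handle the truncation $1_{p_T(E)\le 1/2}$: on the good event $g_T \le g_0 e^{\frac12 D^2 T + \gamma}$, and the constraint $g_0 \le c$ together with $T$ small makes this $\le 1/2$, so the indicator is automatically $1$ on $\mathcal E$; (5) choose the universal constant $c$ (and the $1/5$ slack) so that $e^{-O(1)}$ times the $1-o(1)$ probability of $\mathcal E$ exceeds $1/5$. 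The one genuinely delicate point is balancing $\gamma$ against $T$: we need $\gamma$ large enough that $\P(\mathcal E)\ge 1 - \text{tiny}$ but small enough that $\log\frac1{g_t}$ doesn't blow up; taking $\gamma \asymp \log\frac1{g_0}$ when $D^2 T \asymp \log\frac1{g_0}$ and $\gamma\asymp 1$ in the other regime, and checking $\frac{\gamma^2}{TD^2}\gtrsim 1$ in both, closes this.
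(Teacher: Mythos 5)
Your approach for the regime $T \lesssim \frac{1}{D+\log\frac{1}{g_0}}$ is essentially the paper's Case~2: expand $\phi(g_t)=g_t\sqrt{\log\frac{e}{g_t}}$ by It\^o, bound the drift via the second estimate of Lemma~\ref{lem:volume} with $\norm{A_t}_{\spe}\le 2$ (Lemma~\ref{lem:norm_At}) and $\log\frac{1}{g_t}$ controlled by Lemma~\ref{lem:set_large}, and close with an integral comparison (the paper uses a slightly cleaner ``maximizer'' trick in place of Gr\"onwall, but the effect is the same). However, your plan has a genuine gap in the \emph{first} regime $T\lesssim\frac{1}{D^2}\log\frac{1}{g_0}$, which dominates the max precisely when $\log\frac{1}{g_0}\gtrsim D$. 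In that regime you propose to use the crude bound $d[g_t]_t\le D^2 g_t^2\,dt$ to get drift $\gtrsim -D^2\phi(g_t)$ and assert that the resulting Gr\"onwall integral is $O(1)$, but it is not: $\int_0^T\bigl(\min(D^2,\log\frac{1}{g_0})+D^2t\bigr)\,dt$ is of order $\min\bigl(\log\frac{1}{g_0},\,(\log\frac{1}{g_0})^2/D^2\bigr)$ when $T\asymp\frac{1}{D^2}\log\frac{1}{g_0}$, which is unbounded when $\log\frac{1}{g_0}\to\infty$. So Gr\"onwall alone gives a decay factor $e^{-\Omega(\log\frac{1}{g_0})}$, which is far too lossy.

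The missing idea is that this regime does not need a drift estimate at all. Since $g_t$ is a \emph{martingale}, $\E g_T = g_0$ exactly, and Lemma~\ref{lem:set_large} with $\gamma=\frac34\log\frac{1}{g_0}$ shows $\log\frac{1}{g_T}\ge\frac14\log\frac{1}{g_0}$ except on an event of probability $\le 4g_0^{2}$ (using $T\le\frac18 D^{-2}\log\frac{1}{g_0}$ to make $\frac{\gamma^2}{2TD^2}\ge 2\log\frac1{g_0}$). Then
\[
\E\Bigl(g_T\sqrt{\log\tfrac{1}{g_T}}\,1_{g_T\le\frac12}\Bigr)\;\ge\;\sqrt{\tfrac14\log\tfrac{1}{g_0}}\;\E\Bigl(g_T\,1_{\log\frac{1}{g_T}\ge\frac14\log\frac{1}{g_0}}\Bigr)\;\ge\;\sqrt{\tfrac14\log\tfrac{1}{g_0}}\,\bigl(g_0-4g_0^2\bigr)\;\ge\;\tfrac14 g_0\sqrt{\log\tfrac{1}{g_0}},
\]
with no decay in $T$ at all. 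You should split into these two cases from the outset rather than try to run a single Gr\"onwall argument across the whole range of $T$; as written, step~(3) of your outline does not close in the first regime.
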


\begin{proof}
Fix any set $E\subset\Rn$ and define $g_{t}=p_{t}(E)$.

Case 1) $T\leq\frac{1}{8}D^{-2}\log\frac{1}{g_{0}}$. Lemma \ref{lem:set_large}
shows that
\[
\P(\log\frac{1}{g_{t}}\geq\frac{1}{4}\log\frac{1}{g_{0}}\text{ for all }0\leq t\leq T)\geq1-4g_{0}^{2}.
\]
Using $g_{0}\leq\frac{1}{16}$, we have that
\[
\E\left(g_{T}\sqrt{\log\frac{1}{g_{T}}}1_{g_{T}\leq\frac{1}{2}}\right)\geq\E\left(g_{T}\sqrt{\log\frac{1}{g_{T}}}1_{\log\frac{1}{g_{T}}\geq\frac{1}{4}\log\frac{1}{g_{0}}}\right)\geq\E\left(g_{T}1_{\log\frac{1}{g_{T}}\geq\frac{1}{4}\log\frac{1}{g_{0}}}\right)\sqrt{\frac{1}{4}\log\frac{1}{g_{0}}}.
\]
Since $\E g_{T}=g_{0}$ and $g_{T}\leq1$, we have that 
\[
\E\left(g_{T}1_{\log\frac{1}{g_{T}}\geq\frac{1}{4}\log\frac{1}{g_{0}}}\right)=g_{0}-\E\left(g_{T}1_{\log\frac{1}{g_{T}}<\frac{1}{4}\log\frac{1}{g_{0}}}\right)\geq g_{0}-4g_{0}^{2}\geq\frac{1}{2}g_{0}.
\]
Therefore, 
\[
\E\left(g_{T}\sqrt{\log\frac{1}{g_{T}}}1_{g_{T}\leq\frac{1}{2}}\right)\geq\frac{1}{4}g_{0}\sqrt{\log\frac{1}{g_{0}}}.
\]

Case 2) $T\geq\frac{1}{8}D^{-2}\log\frac{1}{g_{0}}$. Now, we assume
that $T\leq\frac{1}{2c(D+\log\frac{1}{g_{0}})}$ where $c\geq1$ is
the universal constant appears in Lemma \ref{lem:norm_At}. Note that
\begin{align*}
dg_{t}\sqrt{\log\frac{e}{g_{t}}} & =\frac{2\log\frac{e}{g_{t}}-1}{2\sqrt{\log\frac{e}{g_{t}}}}dg_{t}-\frac{2\log\frac{e}{g_{t}}+1}{8g_{t}\log^{\frac{3}{2}}\frac{e}{g_{t}}}d[g_{t}]_{t}.
\end{align*}
Since $dg_{t}=g_{t}\log\frac{e}{g_{t}}\alpha_{t}^{T}dW_{t}$ for some
$\norm{\alpha_{t}}_{2}\leq\sqrt{30}\norm{A_{t}}_{\spe}^{1/2}$ (from
(\ref{eq:gt2}) in Lemma \ref{lem:volume}), 
\begin{align*}
dg_{t}\sqrt{\log\frac{e}{g_{t}}} & =\frac{1}{2}g_{t}\sqrt{\log\frac{e}{g_{t}}}(2\log\frac{e}{g_{t}}-1)\alpha_{t}^{T}dW_{t}-\frac{1}{8}g_{t}\sqrt{\log\frac{e}{g_{t}}}(2\log\frac{e}{g_{t}}+1)\norm{\alpha_{t}}_{2}^{2}dt.
\end{align*}
For any $s\geq s'\geq0$, we have that
\begin{align}
\E g_{s}\sqrt{\log\frac{e}{g_{s}}} & =g_{s'}\sqrt{\log\frac{e}{g_{s'}}}-\frac{1}{8}\int_{s'}^{s}\E\left(g_{t}\sqrt{\log\frac{e}{g_{t}}}(2\log\frac{e}{g_{t}}+1)\norm{\alpha_{t}}_{2}^{2}\right)dt\nonumber \\
 & \geq g_{s'}\sqrt{\log\frac{e}{g_{s'}}}-12\int_{s'}^{s}\E\left(\norm{A_{t}}_{\spe}g_{t}\log^{\frac{3}{2}}\frac{e}{g_{t}}\right)dt.\label{eq:glogg1}
\end{align}

Using $T\geq\frac{1}{8}D^{-2}\log\frac{1}{g_{0}}$, Lemma \ref{lem:set_large}
shows that
\begin{equation}
\P(15D^{2}T\geq\max_{0\leq t\leq T}\log\frac{1}{g_{t}})\geq1-4g_{0}^{2}.\label{eq:glogg2}
\end{equation}
Now, using $T\leq\frac{1}{2c(\sqrt{n}+\log\frac{1}{g_{0}})}$, Lemma
\ref{lem:norm_At} (with $k=2$) shows that
\begin{equation}
\P(\max_{t\in[0,T]}\norm{A_{t}}_{\spe}\geq2)\leq2g_{0}^{2}.\label{eq:glogg3}
\end{equation}
Let $E$ be the event that both $\max_{0\leq t\leq T}\log\frac{1}{g_{t}}\leq15D^{2}T$
and $\max_{t\in[0,T]}\norm{A_{t}}_{\spe}\leq2$. Then, combining (\ref{eq:glogg2})
and (\ref{eq:glogg3}), we have that
\begin{align*}
\E\left(\norm{A_{t}}_{\spe}g_{t}\log^{\frac{3}{2}}\frac{e}{g_{t}}\right) & \leq2(1+15D^{2}T)\E\left(g_{t}\log^{\frac{1}{2}}\frac{e}{g_{t}}1_{E}\right)+\E\left(\norm{A_{t}}g_{t}\log^{\frac{3}{2}}\frac{e}{g_{t}}1_{E^{c}}\right)\\
 & \leq2(1+15D^{2}T)\E\left(g_{t}\log^{\frac{1}{2}}\frac{e}{g_{t}}\right)+12\sqrt{D}g_{0}^{2}.
\end{align*}
where we used that $\norm{A_{t}}_{\spe}\leq\sqrt{D}$ a.s. and $g_{t}\log^{\frac{3}{2}}\frac{e}{g_{t}}\leq2$
and $\P(E^{c})\leq6g_{0}^{2}$. Putting it into (\ref{eq:glogg1}),
for any $s'\leq s\leq T$, we have that 
\begin{align*}
\E g_{s}\sqrt{\log\frac{e}{g_{s}}} & \geq g_{s'}\sqrt{\log\frac{e}{g_{s'}}}-24(1+15D^{2}T)\int_{s'}^{s}\E g_{t}\sqrt{\log\frac{e}{g_{t}}}dt-144\sqrt{D}g_{0}^{2}T.
\end{align*}
Let $s^{*}$ be the $s\in[0,T]$ that maximizes $\E g_{s}\sqrt{\log\frac{e}{g_{s}}}$
and $f_{t}=\E g_{t}\sqrt{\log\frac{e}{g_{t}}}$. For all $T\geq s\geq s^{*}$,
we have that
\begin{align*}
f_{s} & \geq f_{s^{*}}-24(1+15D^{2}T)\int_{s'}^{s}f_{t}dt-144\sqrt{D}g_{0}^{2}T\\
 & \geq f_{s^{*}}-24(T+15D^{2}T^{2})f_{s^{*}}-144\sqrt{D}g_{0}^{2}T\\
 & \geq\frac{4}{5}f_{s^{*}}\geq\frac{4}{5}f_{0}
\end{align*}
where we used that $T\leq\frac{1}{10^{5}D}\leq\frac{1}{10^{5}}$ at
the end. Therefore, we have
\[
\E g_{T}\sqrt{\log\frac{e}{g_{T}}}\geq\frac{4}{5}g_{0}\sqrt{\log\frac{e}{g_{0}}}.
\]
Now, we note that
\begin{align*}
\E\left(g_{T}\sqrt{\log\frac{1}{g_{T}}}1_{g_{T}\leq\frac{1}{2}}\right) & \geq\frac{1}{2}\E\left(g_{T}\sqrt{\log\frac{e}{g_{T}}}1_{g_{T}\leq\frac{1}{2}}\right)\geq\frac{2}{5}g_{0}\sqrt{\log\frac{e}{g_{0}}}-\frac{\sqrt{\log2e}}{2}g_{0}\\
 & \geq\frac{1}{5}g_{0}\sqrt{\log\frac{e}{g_{0}}}
\end{align*}
where we used $g_{0}\leq\frac{1}{e^{10}}$ at the end.
\end{proof}

\subsection{Gaussian Case}

The next theorem can be found in \cite[Thm 1.1]{Ledoux1999}. We give
another proof for completeness.
\begin{thm}
\label{thm:Gaussian-iso}Let $h(x)=f(x)e^{-\frac{t}{2}\norm x^{2}}/\int f(y)e^{-\frac{t}{2}\norm y^{2}}dy$
where $f:\R^{n}\rightarrow\R_{+}$ is an integrable logconcave function
and $t\geq0$. Let $p(S)=\int_{S}h(x)dx$. For any $p(S)\leq\frac{1}{2}$,
we have
\[
p(\partial S)=\Omega\left(\sqrt{t}\right)\cdot p(S)\sqrt{\log\frac{1}{p(S)}}.
\]
\end{thm}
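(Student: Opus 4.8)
The plan is to reduce the statement to dimension one by localization and then settle the one-dimensional case with elementary Gaussian tail estimates. Observe first that $h$ is a $t$-strongly logconcave probability density: $f$ is logconcave and $x\mapsto e^{-\frac t2\norm x^{2}}$ is $t$-strongly logconcave, so $-\log h$ has Hessian $\succeq tI$ wherever $h>0$. Put the claim in collar form: for every partition $\R^{n}=S_{1}\sqcup S_{2}\sqcup S_{3}$ with $d(S_{1},S_{2})\geq\delta$ and $p(S_{1})\leq p(S_{2})$ we want $p(S_{3})\gtrsim\sqrt t\,\delta\,p(S_{1})\sqrt{\log(1/p(S_{1}))}$; the theorem follows by taking $S_{3}$ a shrinking collar of $S$ and $\delta\to0$, since then $p(S_{3})/\delta\to p(\partial S)$. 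The localization method for isoperimetric inequalities (as in \cite{KLS95,KannanLM06}) reduces this to the same statement along a ``needle'' --- a segment carrying a weight $\ell(s)^{n-1}$ with $\ell$ affine and $\geq0$. Parametrized by arc length, the needle density is $w(s)=\ell(s)^{n-1}f(a+s\hat v)e^{-\frac t2\norm{a+s\hat v}^{2}}$; since $\ell^{n-1}$ is logconcave in $s$, $f$ along a line is logconcave, and $\frac t2\norm{a+s\hat v}^{2}=\frac t2(s-s_{0})^{2}+\mathrm{const}$, the density $w$ is again $t$-strongly logconcave in $s$. After the standard one-dimensional reduction to half-lines it therefore suffices to prove: \emph{if $m$ is a $t$-strongly logconcave density on an interval $J\subseteq\R$ and $a\in J$ with $p\defeq\int_{J\cap[a,\infty)}m\leq\frac12$, then $m(a)\geq c\sqrt t\,p\sqrt{\log(1/p)}$} for an absolute $c>0$.

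For this one-dimensional estimate put $\phi=\log m$ (so $\phi''\leq-t$) and $\beta=\phi'(a)$. Strong concavity gives $\phi(a\pm r)\leq\phi(a)\pm\beta r-\frac t2 r^{2}$ for $r\geq0$; integrating over $r\in[0,\infty)$ (which only enlarges the right-hand sides) yields
\[
p\ \leq\ m(a)\int_{0}^{\infty}e^{\beta r-\frac t2 r^{2}}\,dr,\qquad 1-p\ \leq\ m(a)\int_{0}^{\infty}e^{-\beta r-\frac t2 r^{2}}\,dr .
\]
Adding these, and noting the two integrals sum to the full Gaussian integral $\sqrt{2\pi/t}\,e^{\beta^{2}/2t}$, gives $m(a)\geq\frac{\sqrt t}{\sqrt{2\pi}}e^{-\beta^{2}/2t}$. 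On the other hand $\int_{0}^{\infty}e^{\beta r-\frac t2 r^{2}}dr\leq\int_{0}^{\infty}e^{\beta r}dr=\frac1{|\beta|}$ when $\beta<0$, and likewise $\int_{0}^{\infty}e^{-\beta r-\frac t2 r^{2}}dr\leq\frac1\beta$ when $\beta>0$, so $m(a)\geq|\beta|\,p$ when $\beta<0$ and $m(a)\geq\beta(1-p)\geq\frac\beta2$ when $\beta>0$ (using $1-p\geq\frac12$).

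It remains to combine these three lower bounds on $m(a)$. Because $p\sqrt{\log(1/p)}\leq\frac12\sqrt{\log2}$ for every $p\leq\frac12$, the bound $m(a)\gtrsim\sqrt t$ already suffices whenever $|\beta|\leq\sqrt t$ (use $m(a)\geq\frac{\sqrt t}{\sqrt{2\pi}}e^{-1/2}$) and whenever $\beta\geq\sqrt t$ (use $m(a)\geq\frac\beta2\geq\frac{\sqrt t}2$). The only real work is the regime $\beta<-\sqrt t$: writing $L=\log(1/p)$, if $|\beta|\geq\frac12\sqrt{tL}$ then $m(a)\geq|\beta|p\geq\frac12\sqrt t\,p\sqrt L$; otherwise $\beta^{2}/2t<L/8$, so $m(a)\geq\frac{\sqrt t}{\sqrt{2\pi}}e^{-\beta^{2}/2t}>\frac{\sqrt t}{\sqrt{2\pi}}e^{-L/8}$, which exceeds $\frac12\sqrt t\,e^{-L}\sqrt L=\frac12\sqrt t\,p\sqrt L$ because $e^{7L/8}\geq\frac{\sqrt{2\pi}}2\sqrt L$ for all $L\geq\log2$. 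In every case $m(a)\geq c\sqrt t\,p\sqrt{\log(1/p)}$ with, say, $c=\frac12$. The one-dimensional estimate is thus entirely elementary; the main obstacle is really the bookkeeping of the localization reduction --- in particular observing that the needle densities keep the \emph{exact} strong-logconcavity constant $t$ (the extra weight $\ell^{n-1}$ is logconcave and the Gaussian part has curvature exactly $t$ along every line), and carrying out the standard passage from the three-interval needle inequality to the half-line estimate above --- together with the single delicate case of the computation, where $e^{-\beta^{2}/2t}$ must be balanced against $e^{-L}\sqrt L$.

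Alternatively one can bypass localization: since $h=e^{-W}$ with $\nabla^{2}W\succeq tI$, Caffarelli's contraction theorem provides a $1$-Lipschitz (Brenier) map pushing $N(0,t^{-1}I)$ forward to $p$; a $1$-Lipschitz pushforward does not decrease boundary measure relative to measure, so the log-isoperimetric inequality for $N(0,t^{-1}I)$ transfers to $p$. That inequality for $N(0,t^{-1}I)$ has constant $\Theta(\sqrt t)$ --- a scaling of the Gaussian isoperimetric inequality of Borell and Sudakov--Tsirelson, combined with the tail asymptotics $\int_{b}^{\infty}e^{-r^{2}/2}dr\asymp b^{-1}e^{-b^{2}/2}$.
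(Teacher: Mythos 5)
Your proof is correct, and its main route (localization to one dimension) is in the same spirit as the paper's, but the one-dimensional analysis is organized differently and is arguably cleaner. After localizing, the paper further reduces to the case where the needle density is exactly a truncated Gaussian $Ce^{-\gamma x - t x^{2}/2}$ on an interval, rescales to $t=1$, and does casework on the location $y$ of the boundary point ($y\geq 1$, $y\leq -1$, $|y|\leq 1$), invoking standard Gaussian tail estimates. You instead keep the needle density as a general $t$-strongly logconcave function $m$ and derive the pointwise bound $m(a)\geq c\sqrt{t}\,p\sqrt{\log(1/p)}$ directly from strong concavity, balancing the three lower bounds on $m(a)$ coming from the Gaussian majorization of the two tails and from $\beta=(\log m)'(a)$. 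This avoids having to argue that the extremal needle density is a pure truncated Gaussian and makes the $\sqrt{t}$ scaling transparent; your casework on $\beta$ plays a role analogous to the paper's casework on $y$. Your Caffarelli alternative is a genuinely different route: it replaces localization entirely by a $1$-Lipschitz transport map from $N(0,t^{-1}I)$ and imports the Gaussian log-isoperimetric inequality, which is more conceptual and sidesteps the needle bookkeeping at the cost of heavier machinery than the paper uses. Both of your routes are sound; the only step left compressed, as in the paper, is the passage from the three-set needle inequality (for a partition $J_{1}\sqcup J_{2}\sqcup J_{3}$ with $d(J_{1},J_{2})\geq\delta$) down to the half-line estimate, which rests on the standard fact that for a one-dimensional logconcave weight the extremal set is a terminal interval.
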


\begin{proof}
Let $g=p(S)$. Then, the desired statement can be written as 
\[
\int_{S}h(x)\:dx=g\int_{\R^{n}}h(x)\implies\int_{\partial S}h(x)\:dx\ge c\sqrt{t}g\sqrt{\log(\frac{1}{g})}\int_{\R^{n}}h(x)
\]
for some constant $c$. By the localization lemma \cite{KLS95}, if
there is a counterexample, there is a counterexample in one-dimension
where $h$ is of the form $h(x)=Ce^{-\gamma x-t\frac{x^{2}}{2}}$
restricted to some interval on the real line. Without loss of generality,
we can assume that $S$ is a single interval, otherwise, any interval
has smaller boundary measure and smaller volume. By rescaling, flipping
and shifting the function $h$, we can assume that $t=1$ and that
$h(x)=e^{-\frac{1}{2}x^{2}}1_{[a,b]}$ and that $S=[y,b]$ for some
$a<y<b$. 

It remains to show that for $g\le\frac{1}{2}$, 
\[
\frac{\int_{y}^{b}e^{-\frac{x^{2}}{2}}\,dx}{\int_{a}^{b}e^{-\frac{x^{2}}{2}}\:dx}=g\implies\frac{e^{-\frac{y^{2}}{2}}}{\int_{a}^{b}e^{-\frac{x^{2}}{2}}\,dx}\apprge g\sqrt{\log\frac{1}{g}}.
\]

There are three cases: $y\geq1$, $y\leq-1$ and $-1\leq y\leq-1$.
Let $A=\int_{a}^{b}e^{-\frac{x^{2}}{2}}\:dx$. In the first case $y\geq1$,
we note that the integral 
\begin{equation}
g\cdot A=\int_{y}^{b}e^{-\frac{x^{2}}{2}}\,dx\leq\int_{y}^{\infty}e^{-\frac{x^{2}}{2}}\,dx\apprle e^{-\frac{y^{2}}{2}}/y.\label{eq:gaussian_case_1}
\end{equation}
Rearrange the terms, we have that
\[
\frac{e^{-\frac{y^{2}}{2}}}{\int_{a}^{b}e^{-\frac{x^{2}}{2}}\,dx}\apprge g\cdot y.
\]
Using (\ref{eq:gaussian_case_1}), we have that $y\apprge\sqrt{\log\frac{1}{g\cdot A}}$
and hence
\[
\frac{e^{-\frac{y^{2}}{2}}}{\int_{a}^{b}e^{-\frac{x^{2}}{2}}\,dx}\apprge g\cdot\sqrt{\log\frac{1}{g\cdot A}}\apprge g\cdot\sqrt{\log\frac{1}{g}}
\]
where we used that $A=\int_{a}^{b}e^{-\frac{x^{2}}{2}}\:dx\leq\sqrt{2\pi}$
and $g\leq\frac{1}{2}$.

For the second case $y\leq-1$, since $g\leq\frac{1}{2}$, we have
that
\[
\frac{e^{-\frac{y^{2}}{2}}}{\int_{a}^{b}e^{-\frac{x^{2}}{2}}\,dx}\geq\frac{2e^{-\frac{y^{2}}{2}}}{\int_{a}^{y}e^{-\frac{x^{2}}{2}}\,dx}\geq\frac{e^{-\frac{y^{2}}{2}}}{\int_{-\infty}^{y}e^{-\frac{x^{2}}{2}}\,dx}\apprge\left|y\right|\geq1
\]
where we used that $\int_{-\infty}^{y}e^{-\frac{x^{2}}{2}}\,dx=\frac{O(1)}{\left|y\right|}e^{-\frac{y^{2}}{2}}$
at the end. This proves the second case because $g\sqrt{\log\frac{1}{g}}\apprle1$.

For the last case $|y|\leq1$, we note that
\[
\frac{e^{-\frac{y^{2}}{2}}}{\int_{a}^{b}e^{-\frac{x^{2}}{2}}\,dx}\geq\frac{e^{-\frac{1}{2}}}{\sqrt{2\pi}}\apprge1.
\]
\end{proof}

\subsection{Proof of main theorem}

We can now prove a bound on the isoperimetric constant. 
\begin{proof}[Proof of Theorem. \ref{lem:boundAgivesKLS}.]
 By Lemma \ref{lem:dp_dA}, $p_{t}$ is a martingale and therefore
\[
p(\partial E)=p_{0}(\partial E)=\E p_{T}(\partial E).
\]
Next, by the definition of $p_{T}$ (\ref{eq:dBt}), we have that
$p_{T}(x)\propto e^{c_{T}^{T}x-\frac{T}{2}\norm x^{2}}p(x)$ and Theorem
\ref{thm:Gaussian-iso} shows that if $p_{t}(E)\leq\frac{1}{2}$,
we have that
\[
p_{T}(\partial E)\apprge\sqrt{T}\cdot p_{t}(E)\sqrt{\log\frac{1}{p_{t}(E)}}.
\]
Hence, we have that
\[
p(\partial E)\apprge\sqrt{T}\cdot\E\left(p_{t}(E)\sqrt{\log\frac{1}{p_{t}(E)}}\cdot1_{p_{t}(E)\leq\frac{1}{2}}\right).
\]

Lemma \ref{lem:g_sqrt_g} shows that if 
\[
T\leq c\cdot\max\left(\frac{1}{D^{2}}\log\frac{1}{p_{0}(E)},\frac{1}{\log\frac{1}{p_{0}(E)}+D}\right)
\]
and $p_{0}(E)\leq c$ for some small enough constant $c$, we have
\begin{align*}
p(\partial E) & \apprge\sqrt{T}\cdot p_{0}(E)\sqrt{\log\frac{1}{p_{0}(E)}}\\
 & \apprge\left(\frac{\sqrt{\log\frac{1}{p_{0}(E)}}}{D}+\frac{1}{\sqrt{\log\frac{1}{p_{0}(E)}+D}}\right)p_{0}(E)\sqrt{\log\frac{1}{p_{0}(E)}}\\
 & \apprge\left(\frac{\log\frac{1}{p_{0}(E)}}{D}+\sqrt{\frac{\log\frac{1}{p_{0}(E)}}{\log\frac{1}{p_{0}(E)}+D}}\right)p_{0}(E)\\
 & \apprge\left(\frac{\log\frac{1}{p_{0}(E)}}{D}+\sqrt{\frac{\log\frac{1}{p_{0}(E)}}{D}}\right)p_{0}(E).
\end{align*}
If $p_{0}(E)\geq c$, the bound simply follows from Theorem \ref{thm:LeeV}.
\end{proof}

\section{Consequences of the improved isoperimetric inequality}

In this section, we give some consequences of the improved isoperimetric
inequality (Theorem \ref{lem:boundAgivesKLS}). First we note the
following Cheeger-type logarithmic isoperimetric inequality.
\begin{thm}[\cite{ledoux1994simple}]
Let $\mu$ be any absolutely continuous measure on $\Rn$ such that
for any open subsets $A$ of $\Rn$ with $\mu(A)\leq\frac{1}{2}$,
\[
\mu(\partial A)\geq\phi\cdot\mu(A)\sqrt{\log\frac{1}{\mu(A)}}.
\]
Then, for any $f$ such that $\int_{\Rn}f^{2}d\mu=1$, we have that
\[
\int_{\Rn}\left|\nabla f\right|^{2}d\mu\gtrsim\phi^{2}\cdot\int_{\Rn}f^{2}\log f^{2}d\mu.
\]
\end{thm}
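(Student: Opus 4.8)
The plan is to prove the equivalent inequality $\Ent_{\mu}(f^{2})\lesssim\phi^{-2}\int_{\Rn}\norm{\nabla f}^{2}d\mu$ for $f$ with $\int f^{2}d\mu=1$ (this differs from the stated form only by the harmless factor $4$ coming from $\nabla(f^{2})=2f\nabla f$). Two preliminary reductions: replacing $f$ by $\left|f\right|$ we may assume $f\ge0$ (this only decreases $\int\norm{\nabla f}^{2}d\mu$ and leaves $f^{2}\log f^{2}$ unchanged), and by truncation and monotone convergence we may assume $f$ is bounded and locally Lipschitz. Also, specializing the hypothesis to $\sqrt{\log(1/\mu(A))}\ge\sqrt{\log2}$ gives an ordinary Cheeger inequality $\mu(\partial A)\gtrsim\phi\,\mu(A)$ for $\mu(A)\le\frac{1}{2}$, hence — by the classical Maz'ya--Cheeger co-area argument, the same one behind Theorem \ref{thm:Poincare} — a Poincar\'e inequality $\Var_{\mu}(h)\lesssim\phi^{-2}\int\norm{\nabla h}^{2}d\mu$; the same computation applied to $h^{2}$ gives, for any $h\ge0$ supported on a set of measure $\le\frac12$, the bound $\int h^{2}d\mu\lesssim\phi^{-2}\int\norm{\nabla h}^{2}d\mu$. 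These $L^{2}$ estimates will handle the ``bulk'' of $f$, and the genuine logarithmic inequality only the large values.

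Let $m^{2}\in(0,2]$ be a median of $f^{2}$, so $\mu(\{f>m\})\le\frac12$ and $\mu(\{f<m\})\le\frac12$ (if $m=0$, i.e. $\mu(\{f=0\})\ge\frac12$, skip the splitting and run Step 3 below directly on $\{f>0\}$). Use the variational bound $\Ent_{\mu}(f^{2})\le\E\,\psi_{m^{2}}(f^{2})$, where $\psi_{c}(x)=x\log(x/c)-x+c\ge0$, and split the expectation over $\{f<m\}$ and $\{f>m\}$ ($\{f=m\}$ contributes $0$). On $\{f<m\}$ apply the elementary inequality $\psi_{c}(x)\le(c-x)^{2}/c$ (equivalent to $\log r+1\le r$), together with $(m^{2}-f^{2})_{+}\le2m(m-f)_{+}$ and $m\le\sqrt2$, to get $\E[\psi_{m^{2}}(f^{2})1_{f<m}]\lesssim\E[(m-f)_{+}^{2}]$; since $(m-f)_{+}$ is supported on $\{f<m\}$, a set of measure $\le\frac12$, the $L^{2}$-Cheeger bound above yields $\E[(m-f)_{+}^{2}]\lesssim\phi^{-2}\int\norm{\nabla f}^{2}d\mu$. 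This disposes of the small values.

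The heart of the proof is the large-values term $\E[\psi_{m^{2}}(f^{2})1_{f>m}]\le\E[f^{2}\log(f^{2}/m^{2})1_{f>m}]=:\mathcal E$. For this I would run the co-area formula against a well-chosen increasing weight $w$ supported on $[m,\infty)$, e.g. $w(s)\asymp s\sqrt{1+\log(s/m)}$, and set $W(s)=\int_{m}^{s}w$. The co-area formula gives $\int\norm{\nabla W(f)}d\mu=\int_{m}^{\infty}w(s)\,\mu(\{f>s\})ds$, and since every level set $\{W(f)>r\}$ has measure $\le\mu(\{f>m\})\le\frac12$, the hypothesis gives $\int\norm{\nabla W(f)}d\mu\ge\phi\int_{m}^{\infty}w(s)\,\mu(\{f>s\})\sqrt{\log\frac{1}{\mu(\{f>s\})}}\,ds$; on the other hand $\int\norm{\nabla W(f)}d\mu=\int w(f)\norm{\nabla f}d\mu\le\norm{\nabla f}_{L^{2}(\mu)}\norm{w(f)}_{L^{2}(\mu)}$ by Cauchy--Schwarz. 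The weight is chosen so that, first, $\norm{w(f)}_{L^{2}(\mu)}^{2}\asymp\E[f^{2}(1+\log(f^{2}/m^{2}))1_{f>m}]\lesssim\mathcal E+(\text{a variance-type term already controlled in Step 3})$, and second, the weighted integral on the right dominates $\mathcal E$ up to a constant. The one genuinely delicate point is this last comparison: for $s$ near $m$ the factor $\sqrt{\log(1/\mu(\{f>s\}))}$ can be as small as $\sqrt{\log2}$, so it cannot simply be dropped. Here one uses the a priori Markov bound $\mu(\{f>s\})\le s^{-2}$ (from $\int f^{2}d\mu=1$), which forces $\log(1/\mu(\{f>s\}))\gtrsim\log(s/m)$ once $s/m$ is bounded away from $1$, matching the growth of $w$; and the remaining moderate range $m\le s\le Cm$ is absorbed into the Poincar\'e estimate of Step 3, since there $f^{2}\log(f^{2}/m^{2})\le(2\log C)(f^{2}-m^{2})_{+}$. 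Combining the two bounds on $\int\norm{\nabla W(f)}d\mu$ yields $\phi\,\mathcal E\lesssim\norm{\nabla f}_{L^{2}(\mu)}\sqrt{\mathcal E+(\text{Step 3 terms})}$, and since the Step 3 terms are themselves $\lesssim\phi^{-2}\norm{\nabla f}_{L^{2}(\mu)}^{2}$, this self-improves to $\mathcal E\lesssim\phi^{-2}\norm{\nabla f}_{L^{2}(\mu)}^{2}$.

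Adding the small- and large-value estimates gives $\Ent_{\mu}(f^{2})\lesssim\phi^{-2}\int\norm{\nabla f}^{2}d\mu$, which is the claim. I expect the main obstacle to be Step 4: choosing the weight $w$ and organizing the Cauchy--Schwarz together with the a priori Markov bound so that every constant lines up and the leftover ``bulk'' is genuinely of Poincar\'e type; the remaining steps are routine reductions or the classical Cheeger-type co-area computation.
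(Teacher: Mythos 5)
The paper does not prove this statement; it cites \cite{ledoux1994simple} and uses the result as a black box, so there is no ``paper's own proof'' to compare against. Judged on its own, your outline is in the right spirit (co-area formula, a weight function, Cauchy--Schwarz, boundary Poincar\'e), but Step~4 has a genuine gap. The reduction $\E[\psi_{m^{2}}(f^{2})1_{f>m}]\le\E[f^{2}\log(f^{2}/m^{2})1_{f>m}]=:\mathcal E$ is too lossy: it drops the term $-(f^{2}-m^{2})$, and with it the quadratic cancellation near $f=m$. Concretely, for $m\le f\le Cm$ one has $\psi_{m^{2}}(f^{2})\lesssim_{C}(f-m)_{+}^{2}$ (which is of Poincar\'e type and does land in $\phi^{-2}\int|\nabla f|^{2}d\mu$), but $f^{2}\log(f^{2}/m^{2})\asymp m(f-m)_{+}$ there, a \emph{linear} quantity; your claimed inequality $f^{2}\log(f^{2}/m^{2})\le(2\log C)(f^{2}-m^{2})_{+}$ is not only wrong near $r=C^{2}$ (where the ratio exceeds $2\log C$), but even after fixing the constant it only bounds $\mathcal E$ on the moderate range by $\E[(f-m)_{+}]\lesssim\phi^{-1}\|\nabla f\|_{2}$, not by $\phi^{-2}\|\nabla f\|_{2}^{2}$. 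In fact $\mathcal E\lesssim\phi^{-2}\|\nabla f\|_{2}^{2}$ is simply false: take $\mu$ Gaussian and $f=(1+\varepsilon h)/\sqrt{1+\varepsilon^{2}}$ with $h$ bounded, mean zero, unit variance; then $\mathcal E\asymp\varepsilon$ while $\phi^{-2}\|\nabla f\|_{2}^{2}\asymp\varepsilon^{2}$. The same issue shows up in the self-improvement step: $\|w(f)\|_{2}^{2}\gtrsim\E[f^{2}1_{f>m}]\ge m^{2}\mu(\{f>m\})$, a quantity of order one (it is $\approx 1/2$ when $m\approx1$), which cannot be absorbed into ``$\mathcal E+\text{Step 3 terms}$'' when the right-hand side is small, so $\phi\mathcal E\lesssim\|w(f)\|_{2}\|\nabla f\|_{2}$ only yields the $L^{1}$-type bound $\mathcal E\lesssim\phi^{-1}\|\nabla f\|_{2}$, not the $L^{2}$ one.

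The repair, which is essentially what Ledoux-type proofs do, is to never lose the support condition: first reduce to bounding $\Ent_{\mu}(g_{\pm}^{2})$ for $g_{\pm}=(f-m)_{\pm}$ via a Rothaus-type splitting (plus a variance term absorbed by Poincar\'e), and run the co-area/weight/Cauchy--Schwarz argument on $g$ instead of on $f$. The crucial payoff is that $g$ vanishes on a set of measure $\ge\frac12$, so the ``boundary Poincar\'e'' gives $\|g\|_{2}^{2}\lesssim\phi^{-2}\|\nabla g\|_{2}^{2}$; then the additive order-one piece in $\|w(g)\|_{2}^{2}$ (the analogue of $m^{2}\mu(\{f>m\})$) is itself $\lesssim\phi^{-2}\|\nabla f\|_{2}^{2}$ and the self-improvement closes. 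In the moderate range you must also work with $\psi$ rather than $f^{2}\log(f^{2}/m^{2})$ so that the quadratic cancellation survives. Your Steps 1--3 and the co-area/Cauchy--Schwarz mechanism are sound; the missing idea is the Rothaus-type reduction that turns the constant piece of $\|w(\cdot)\|_{2}^{2}$ into something controllable.
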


Applying this and Theorem \ref{lem:boundAgivesKLS}, we have the following
result.
\begin{thm}
Given an isotropic logconcave distribution $p$ with diameter $D$.
For any $f$ such that $\int_{\Rn}f^{2}(x)p(x)dx=1$, we have that
\[
\int_{\Rn}\left|\nabla f(x)\right|^{2}p(x)dx\gtrsim\frac{1}{D}\cdot\int_{\Rn}f^{2}\log f^{2}p(x)dx.
\]
\end{thm}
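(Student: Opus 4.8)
The plan is to combine the two results stated immediately above: the Ledoux-type Cheeger inequality that converts a logarithmic isoperimetric bound into a log-Sobolev inequality, and Theorem \ref{lem:boundAgivesKLS}, which supplies exactly such a logarithmic isoperimetric bound for isotropic logconcave densities of diameter $D$. Concretely, I would first apply Theorem \ref{lem:boundAgivesKLS} to the given isotropic logconcave density $p$: for any measurable (in particular open) set $A$ with $p(A)\leq\frac12$, it gives
\[
p(\partial A)\geq\Omega\!\left(\frac{\log\frac{1}{p(A)}}{D}+\sqrt{\frac{\log\frac{1}{p(A)}}{D}}\right)p(A).
\]
Since the first term inside the parentheses is nonnegative, we may drop it and keep only the second, obtaining
\[
p(\partial A)\geq\Omega\!\left(\sqrt{\frac{\log\frac{1}{p(A)}}{D}}\right)p(A)=\Omega\!\left(\frac{1}{\sqrt D}\right)\cdot p(A)\sqrt{\log\frac{1}{p(A)}}.
\]
This is precisely the hypothesis of the preceding theorem with $\phi=\Omega(1/\sqrt D)$, i.e. with the measure $d\mu=p(x)\,dx$.

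The second step is simply to invoke the Ledoux-type theorem with this value of $\phi$. It yields, for any $f$ with $\int_{\Rn}f^2(x)p(x)\,dx=1$,
\[
\int_{\Rn}\left|\nabla f(x)\right|^2 p(x)\,dx\gtrsim\phi^2\int_{\Rn}f^2\log f^2\, p(x)\,dx=\Omega\!\left(\frac1D\right)\int_{\Rn}f^2\log f^2\, p(x)\,dx,
\]
which is exactly the claimed bound. So the proof is essentially a two-line chaining argument, modulo one bookkeeping point worth checking: the hypothesis of the Ledoux theorem is stated for $f$ with $\int f^2\,d\mu=1$, which matches the normalization in the statement to be proved, so no rescaling is needed; and the isoperimetric inequality is needed only for sets of measure at most $\tfrac12$, which is exactly what Theorem \ref{lem:boundAgivesKLS} (via Theorem \ref{thm:Gaussian-iso}) provides after noting the trivial symmetrization $p(A)\leftrightarrow p(\R^n\setminus A)$.

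There is no real obstacle here — the genuine work has already been done in establishing Theorem \ref{lem:boundAgivesKLS}. The only thing to be slightly careful about is that we are deliberately discarding the stronger linear-in-$\log\frac1{p(A)}$ term; one could alternatively feed the full bound into a sharper functional inequality, but for the stated conclusion the $\sqrt{\log(1/p(A))}$ term alone suffices and gives the clean $\Omega(1/D)$ log-Sobolev constant. I would also remark that, since $p$ is isotropic, $D\gtrsim\sqrt n$ always, so this recovers and sharpens (by removing logarithmic factors) the Kannan--Lov\'asz--Montenegro bound $\rho_p\gtrsim 1/D^2$, and it is tight by Lemma \ref{lem:lower_bound_logsob}.
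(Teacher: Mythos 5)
Your proof is correct and is exactly the argument the paper intends: the paper states this theorem immediately after the Ledoux-type result with the single sentence ``Applying this and Theorem \ref{lem:boundAgivesKLS}, we have the following result,'' which is precisely the two-step chaining you spell out (drop the linear term in the isoperimetric bound to get $\kappa_p\gtrsim 1/\sqrt D$, then invoke Ledoux with $\phi=\Omega(1/\sqrt D)$ to get $\rho_p\gtrsim\phi^2\gtrsim 1/D$). Your bookkeeping remarks about the $\mu(A)\le\frac12$ restriction and the normalization are also accurate.
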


\begin{rem*}
In general, isotropic logconcave distributions with diameter $D$
can have the coefficient as small as $O(\frac{1}{D})$, as we show
in Section \ref{sec:tight}. Therefore, our bound is tight up to constant.

\end{rem*}
Now we prove Theorem \ref{thm:conc}, an improved concentration inequality
for general Lipschitz functions and general isotropic logconcave densities.
\begin{proof}[Proof of Theorem \ref{thm:conc}.]
 We first prove the statement for the function $g(x)=\norm x$. Define
\[
E_{t}=\left\{ x\text{ such that }\norm x\geq\text{med}_{x\sim p}\norm x+t\right\} 
\]
 and $\alpha_{t}\defeq\log\frac{1}{p(E_{t})}$. By the definition
of median, we have that $\alpha_{0}=\log2$. We first give a weak
estimate on how fast $\alpha_{t}$ increases. Note that 
\[
\frac{d\alpha_{t}}{dt}=-\frac{1}{p(E_{t})}\frac{dp(E_{t})}{dt}=\frac{p(\partial E_{t})}{p(E_{t})}\geq\frac{c_{1}}{n^{1/4}}
\]
for some universal constant $c_{1}>0$ where we used that definition
of $p(\partial E_{t})$ to get $\frac{dp(E_{t})}{dt}=-p(\partial E_{t})$
and Theorem \ref{thm:LeeV} at the end. Therefore, 
\begin{equation}
\alpha_{t+s}\geq\alpha_{t}+c_{1}\cdot sn^{-1/4}\label{eq:weak_bound}
\end{equation}
for all $t,s\geq0$. To improve on this bound, we consider the distribution
$p_{t}$ defined by truncating the distribution $p$ to the set $\norm x\le\text{med}_{x\sim p}\norm x+t+c_{2}\sqrt{n}$
for some large enough constant $c_{2}$. By the estimate (\ref{eq:weak_bound}),
we see that $p(E_{t})$ only decreases by a tiny factor after truncation
and hence
\[
\frac{p(\partial E_{t})}{p(E_{t})}\geq\frac{1}{2}\frac{p_{t}(\partial E_{t})}{p_{t}(E_{t})}.
\]
Next, we note that $p_{t}$ is almost isotropic, namely its covariance
matrix $A_{t}$ satisfies $\frac{1}{2}I\preceq A_{t}\preceq2I$ (in
fact, it is exponentially close to $I$). Although Theorem \ref{lem:boundAgivesKLS}
only applies to the isotropic case, but we can always renormalize
the distribution $p_{t}$ to isotropic and then normalize it back.
Since the distribution $p_{t}$ is almost isotropic, such a re-normalization
does not change $p_{t}(\partial E)$ by more than a multiplicative
constant. 

Therefore, we can apply Theorem \ref{lem:boundAgivesKLS} and get
that
\begin{equation}
\frac{d\alpha_{t}}{dt}=\frac{p(\partial E_{t})}{p(E_{t})}\geq\frac{1}{2}\frac{p_{t}(\partial E_{t})}{p_{t}(E_{t})}\gtrsim\sqrt{\frac{\log\frac{1}{p_{t}(E_{t})}}{m+t}}\geq c_{3}\cdot\sqrt{\frac{\alpha_{t}}{m+t}}\label{eq:dalpha}
\end{equation}
for some universal constant $c_{3}>0$ where $m=\text{med}_{x\sim p}\norm x+c_{2}\sqrt{n}$.
Note that
\[
\frac{d\sqrt{\alpha_{t}}}{dt}\geq\frac{c_{3}}{2}\cdot\sqrt{\frac{1}{m+t}}
\]
and hence
\[
\sqrt{\alpha_{t}}-\sqrt{\alpha_{0}}\geq\frac{c_{3}}{2}\int_{0}^{t}\sqrt{\frac{1}{m+s}}ds=c_{3}\cdot\left(\sqrt{m+t}-\sqrt{m}\right).
\]

For $t\leq m$, we have that
\[
\sqrt{\alpha_{t}}-\sqrt{\alpha_{0}}\geq c_{3}\cdot\left(\sqrt{m}+\sqrt{m}\frac{t}{3m}-\sqrt{m}\right)\geq\frac{c_{3}\cdot t}{3\sqrt{m}}.
\]
and hence $\alpha_{t}\geq\frac{c_{3}^{2}}{9m}t^{2}.$

For $t\geq m$, we note that $\sqrt{1+x}-1\geq\frac{1}{3}\sqrt{x}$
for all $x\geq1$. Therefore, 
\[
\sqrt{m+t}-\sqrt{m}=\sqrt{m}\left(\sqrt{1+\frac{t}{m}}-1\right)\geq\frac{1}{3}\sqrt{m}\sqrt{\frac{t}{m}}=\frac{1}{3}\sqrt{t}.
\]
Hence, (\ref{eq:dalpha}) shows that $\sqrt{\alpha_{t}}-\sqrt{\alpha_{0}}\geq\frac{c_{3}}{3}\sqrt{t}.$

Combining both cases, we have that $\alpha_{t}\gtrsim\min\left(\frac{t^{2}}{\sqrt{n}},t\right)\gtrsim\frac{t^{2}}{t+\sqrt{n}}.$
Hence,
\begin{equation}
\P_{x\sim p}\left(\norm x-\text{med}_{x\sim p}\norm x\geq t\right)\leq\exp\left(-\Omega(1)\cdot\frac{t^{2}}{t+\sqrt{n}}\right).\label{eq:norm_x_small}
\end{equation}
By the same proof, we have that
\[
\P_{x\sim p}\left(\norm x-\text{med}_{x\sim p}\norm x\leq-t\right)\leq\exp\left(-\Omega(1)\cdot\frac{t^{2}}{t+\sqrt{n}}\right).
\]
This completes the proof for the Euclidean norm.For a general $1$-Lipschitz
function $g$, we define 
\[
E_{t}=\left\{ x\text{ such that }g(x)\geq m_{g}+t\right\} 
\]
where $m_{g}=\text{med}_{x\sim p}g(x)$. Again, we define $\alpha_{t}\defeq\log\frac{1}{p(E_{t})}$
and we have $\alpha_{0}=\log2$. To compute $\frac{p(\partial E_{t})}{p(E_{t})}$,
we consider the restriction of $p$ to a large ball. Let $\zeta\geq0$
to be chosen later and $B_{t,\zeta}$ be the ball centered at $0$
with radius $c_{4}\cdot(\sqrt{n}+\alpha_{t})+\zeta$ where $c_{4}$
is a constant. Let $p_{t,\zeta}$ be the distribution defined by 
\[
p_{t,\zeta}(A)=\frac{p(A\cap B_{t,\zeta})}{p(B_{t,\zeta})}.
\]
Choosing a large constant $c_{4}$ and using (\ref{eq:norm_x_small}),
we have that $p(B_{t,\zeta}^{c})\leq\frac{p(E_{t})}{100n}$ and that
$p_{t,\zeta}$ is almost isotropic. Since $p(B_{t,\zeta}^{c})$ is
so small, we have that $p_{t,\zeta}(E_{t})\geq\frac{1}{2}p(E_{t})$
and that
\[
p_{t,\zeta}(\partial E_{t})=\frac{p(\partial(E_{t}\cap B_{t,\zeta}))}{p(B_{t,\zeta})}\leq2p(\partial E_{t})+2p(\partial B_{t,\zeta}).
\]
Hence, 
\begin{equation}
\frac{p(\partial E_{t})}{p(E_{t})}\geq\frac{1}{4}\frac{p_{t,\zeta}(\partial E_{t})}{p_{t,\zeta}(E_{t})}-\frac{p_{t,\zeta}(\partial B_{t,\zeta})}{p(E_{t})}.\label{eq:pE_ratio}
\end{equation}
Since $p_{t,\zeta}$ is almost isotropic and has support of diameter
$O(\sqrt{n}+\alpha_{t}+\zeta)$, Theorem \ref{lem:boundAgivesKLS}
gives that 
\begin{equation}
\frac{p_{t,\zeta}(\partial E_{t})}{p_{t,\zeta}(E_{t})}\gtrsim\sqrt{\frac{\log\frac{1}{p_{t,\zeta}(E_{t})}}{\sqrt{n}+\alpha_{t}+\zeta}}\geq c_{5}\sqrt{\frac{\alpha_{t}}{\sqrt{n}+\alpha_{t}+\zeta}}\label{eq:ptE_ratio}
\end{equation}
for some universal constant $0<c_{5}<1$. To bound the second term
$p_{t,\zeta}(\partial B_{t,\zeta})$, we note that
\begin{align*}
\int_{0}^{1/c_{5}^{2}}p_{t,\zeta}(\partial B_{t,\zeta})d\zeta & \leq2\int_{0}^{1/c_{5}^{2}}p(\partial B_{t,\zeta})d\zeta\leq2p(B_{t,0}^{c})\leq\frac{p(E_{t})}{50n}
\end{align*}
Hence, there is $\zeta$ between $0$ and $1/c_{5}^{2}$ such that
$p_{t,\zeta}(\partial B_{t,\zeta})\leq\frac{p(E_{t})}{50n}c_{5}^{2}$.

Using this, (\ref{eq:pE_ratio}) and (\ref{eq:ptE_ratio}), we have
that
\begin{align}
\frac{p(\partial E_{t})}{p(E_{t})} & \geq\frac{c_{5}}{4}\sqrt{\frac{\alpha_{t}}{\sqrt{n}+\alpha_{t}+1/c_{5}^{2}}}-\frac{c_{5}^{2}}{50n}\nonumber \\
 & \geq\frac{c_{5}}{8}\sqrt{\frac{\alpha_{t}}{\sqrt{n}+\alpha_{t}+1/c_{5}^{2}}}+\frac{c_{5}}{16}\sqrt{\frac{1}{\sqrt{n}+1/c_{5}^{2}}}-\frac{c_{5}^{2}}{50n}\nonumber \\
 & \geq\frac{c_{5}}{8}\sqrt{\frac{\alpha_{t}}{\sqrt{n}+\alpha_{t}+1/c_{5}^{2}}}.\label{eq:p_ratio_est}
\end{align}

Next, we relate $\frac{p(\partial E_{t})}{p(E_{t})}$ with $dp(E_{t})$.
Since $g$ is $1$-Lipschitz, for any $x$ such that $\norm{x-y}_{2}\leq h$
and $y\in E_{t}$, we have that
\[
g(x)\geq m_{g}+t-h
\]
 Therefore, we have $B(E_{t},h)\subset E_{t-h}$ and, 
\begin{align*}
-\frac{dp(E_{t})}{dt} & =\lim_{h\rightarrow0}\frac{p\left(\left\{ g(x)\geq\text{med}_{x\sim p}g(x)+t-h\right\} \right)-p(E_{t})}{h}\\
 & \geq\lim_{h\rightarrow0}\frac{p\left(B(E_{t},h)\right)-p(E_{t})}{h}=p(\partial E_{t}).
\end{align*}
Using this with (\ref{eq:p_ratio_est}), we have that
\begin{align*}
\frac{d\alpha_{t}}{dt} & =-\frac{1}{p(E_{t})}\frac{dp(E_{t})}{dt}\geq\frac{p(\partial E_{t})}{p(E_{t})}\geq\frac{c_{5}}{8}\sqrt{\frac{\alpha_{t}}{\sqrt{n}+\alpha_{t}+1/c_{5}^{2}}}.
\end{align*}
Solving this equation, we again have that $\alpha_{t}\gtrsim\frac{t^{2}}{t+\sqrt{n}}.$
This proves that 
\[
\P_{x\sim p}\left(g(x)-\text{med}_{x\sim p}g(x)\geq t\right)\leq\exp\left(-\Omega(1)\cdot\frac{t^{2}}{t+\sqrt{n}}\right).
\]

The case of $g(x)-\text{med}_{x\sim p}g(x)\leq-t$ is the same by
taking the negative of $g$. To replace $\text{med}_{x\sim p}g(x)$
by $\E_{x\sim p}g(x)$, we simply use the concentration we just proved
to show that $\left|\E_{x\sim p}g(x)-\text{med}_{x\sim p}g(x)\right|\lesssim n^{\frac{1}{4}}$.
\end{proof}

\section{Small ball probability}

We use the following theorem that gives the small ball estimate for
logconcave distributions with a subgaussian tail, including strongly
logconcave distributions.
\begin{thm}[Theorem 1.3 in \cite{paouris2012small}]
\label{thm:smallpaouris}Given a logconcave distribution $p$ with
covariance matrix $A$. Suppose that $p$ has subgaussian constant
$b$, i.e.
\[
\E_{x\sim p}\left\langle x,\theta\right\rangle ^{2k}\leq\beta^{2k}\E_{g\sim\gamma}g^{2k}\text{ for all }\theta\in\Rn,k=1,2,\cdots
\]
where $\gamma$ is a standard Gaussian random variable. For any $y\in\Rn$
and $0\leq\varepsilon\leq c_{1}$, we have that
\[
\P_{x\sim p}\left(\norm{x-y}_{2}^{2}\leq\varepsilon\tr A\right)\leq\varepsilon^{c_{2}b^{-2}\norm A_{\spe}^{-1}\tr A}
\]
for some positive universal constant $c_{1}$ and $c_{2}$.
\end{thm}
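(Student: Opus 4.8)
Translating $p$ we may assume it is centered (this only shifts $y$ and leaves $A$ unchanged). Write $p_0\defeq b^{-2}\norm A_{\spe}^{-1}\tr A$. The plan is, first, to pass to a Laplace transform: for any $\lambda>0$, Markov's inequality gives $\P_{x\sim p}(\norm{x-y}_2^2\le\varepsilon\tr A)\le e^{\lambda\varepsilon\tr A}\,\E_{x\sim p}e^{-\lambda\norm{x-y}_2^2}$. Using $e^{-\lambda\norm w_2^2}=\E_{g\sim N(0,2\lambda I)}e^{i\langle g,w\rangle}$, Cauchy--Schwarz in $g$, and Fubini, one has $\E_{x\sim p}e^{-\lambda\norm{x-y}_2^2}\le\big(\E_{x,x'\sim p}e^{-\lambda\norm{x-x'}_2^2}\big)^{1/2}$, so it suffices to control $\E_{z\sim q}e^{-\lambda\norm z_2^2}$ where $q$ is the law of $z=x-x'$, a \emph{symmetric} logconcave density with covariance $2A$ and subgaussian constant $O(b)$. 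This simultaneously kills the dependence on $y$ and places the mode at the origin. If one proves that $\E_{z\sim q}e^{-\lambda\norm z_2^2}$ decays polynomially in $\lambda$ of order $\asymp p_0$, then choosing $\lambda\asymp p_0/(\varepsilon\tr A)$ at the end turns the Markov step into the claimed estimate $\varepsilon^{c_2p_0}$.

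\textbf{Bounding the Laplace transform.} Diagonalize $B=2A$, with eigenvalues $\lambda_1\ge\dots\ge\lambda_n$ and eigenvectors $v_i$, and set $z_i=\langle z,v_i\rangle$ so that $e^{-\lambda\norm z_2^2}=\prod_i e^{-\lambda z_i^2}$. I would bound $\E_z e^{-\lambda\norm z_2^2}$ by the better of two complementary estimates. (i) \emph{Volumetric}: bound the factors with $\lambda\lambda_i$ small by $1$, and for $J=\{i:\lambda\lambda_i\ge C_0\}$ use $\E_z e^{-\lambda\norm z_2^2}\le\norm{f_{P_Jz}}_\infty(\pi/\lambda)^{|J|/2}$ together with the standard bound $\norm{f_{P_Jz}}_\infty\le C_1^{|J|}(\det B|_J)^{-1/2}$ on the maximum of a symmetric logconcave density (see, e.g., \cite{Lovasz2007}), giving $\E_z e^{-\lambda\norm z_2^2}\le\prod_{i\in J}\big(C_1^2\pi/(\lambda\lambda_i)\big)^{1/2}$; this is sharp when the spectrum of $A$ is well conditioned. (ii) \emph{Concentration}: the subgaussian hypothesis yields $(\E|z_i|^r)^{1/r}\lesssim b\sqrt{r\lambda_i}$, i.e. $\norm{z_i^2}_{\psi_1}\lesssim b^2\lambda_i$, and a Bernstein-type argument (sharpened using logconcavity) shows $\norm z_2^2$ concentrates around $\tr B$ with sub-exponential parameter $\lesssim b^2\norm A_{\spe}$, hence $\E_z e^{-\lambda\norm z_2^2}\le e^{-c\,p_0}$ for $\lambda\asymp1/\norm A_{\spe}$; this is the relevant estimate when the spectrum is spread out. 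Interpolating between these two regimes across the range of $\lambda$ produces the required polynomial decay of order $\asymp p_0$, and Step 1 then closes the argument.

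\textbf{Main obstacle.} The hard part is making the two estimates fit together to recover the \emph{uniform} exponent $b^{-2}\norm A_{\spe}^{-1}\tr A$ in every case: the volumetric bound by itself only controls the top $\asymp\#\{\lambda_i\gtrsim\norm A_{\spe}\}$ directions, the concentration bound by itself loses a power of $b$ (and is useful only for moderate $\varepsilon$), and neither admits a naive tensorization — logconcave coordinates can be arbitrarily \emph{positively} correlated, so $\E\prod_i e^{-\lambda z_i^2}$ need not be at most $\prod_i\E e^{-\lambda z_i^2}$. The clean way to organize the estimate, and the route taken in \cite{paouris2012small}, is via the $L_q$-centroid bodies $Z_q(z)$: the subgaussian hypothesis gives $Z_q(z)\subseteq Cb\sqrt q\cdot\mathcal E_A$ (the covariance ellipsoid) for all $q\le p_0$, which controls the negative moments $\E\norm z_2^{-q}$ up to $q\asymp p_0$ and yields $\P(\norm z_2\le\delta)\le\big(\delta/(c\sqrt{\tr A})\big)^q$ for such $q$; taking $\delta=\sqrt{\varepsilon\,\tr A}$ gives the theorem. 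I would therefore carry out Step 2 in that language, feeding the one-dimensional inputs above (maximum of a logconcave density; the subgaussian moment comparison to a Gaussian in each direction) into the centroid-body machinery.
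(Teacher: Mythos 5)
First, a point of order: the paper does not prove this statement. It is imported verbatim as Theorem 1.3 of \cite{paouris2012small} and used as a black box in the proof of Theorem \ref{thm:small-ball}, so there is no internal proof to compare yours against; any proof you give has to stand on its own.

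Judged on its own, your write-up is a strategy outline rather than a proof, and the gap sits exactly at the theorem's core. The symmetrization in your first step is sound: Markov plus the Gaussian Fourier representation and Cauchy--Schwarz correctly reduces matters to a symmetric logconcave $z=x-x'$ and removes the center $y$. But the second step never closes. As you yourself note, the volumetric bound only sees the $\#\{i:\lambda_i\gtrsim\norm A_{\spe}\}$ leading directions, the Bernstein-type bound loses in $b$ and is useless for small $\varepsilon$, and tensorization of $\E\prod_ie^{-\lambda z_i^2}$ fails because the coordinates of a logconcave vector can be positively correlated; the ``interpolation'' that would produce the uniform exponent $\asymp b^{-2}\norm A_{\spe}^{-1}\tr A$ is announced but never described, and it is precisely the hard part. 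Your fallback is to invoke the $L_q$-centroid-body machinery: the inclusion $Z_q(z)\subseteq Cb\sqrt q\,\mathcal E_A$ for $q\lesssim b^{-2}\norm A_{\spe}^{-1}\tr A$ together with the assertion that this ``controls the negative moments $\E\norm z_2^{-q}$'' and hence the small-ball probability. That assertion --- the comparison of the negative moments $I_{-q}$ with the positive ones up to the critical parameter $q_*$ --- is the entire content of Paouris's theorem; stating it without proof is equivalent to citing the result you set out to prove. A minor additional remark: if you do take the negative-moment route, symmetrize at the level of probabilities, via $\P(\norm{x-y}_2\le\delta)^2\le\P(\norm{x-x'}_2\le2\delta)$, rather than through the Laplace transform, since the centroid-body estimate controls $\P(\norm z_2\le\delta)$ directly and the detour through $\E e^{-\lambda\norm z_2^2}$ is then unnecessary.
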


We can now use stochastic localization to bound the small ball probability
(Theorem \ref{thm:small-ball}).
\begin{proof}[Proof of Theorem \ref{thm:small-ball}.]
Lemma \ref{lem:norm_At} shows that
\begin{equation}
\norm{A_{t}}_{\spe}\leq2\text{ for all }0\leq t\leq\frac{c}{kn^{1/k}}\label{eq:event_A}
\end{equation}
with probability at least $\frac{9}{10}$ for some universal constant
$c>0$.

Next, we bound $\tr A_{t}$. Lemma \ref{lem:dp_dA} shows that 
\[
d\tr A_{t}=\E_{x}\norm{x-\mu_{t}}^{2}(x-\mu_{t})^{T}dW_{t}-\tr A_{t}^{2}.
\]
Under the event \ref{eq:event_A}, we have that $\tr A_{t}^{2}\leq4n$.
Using Lemma \ref{lem:norm_expectation}, we have that
\[
\norm{\E_{x}\norm{x-\mu_{t}}^{2}(x-\mu_{t})}_{2}\lesssim n.
\]
Therefore, $d\tr A_{t}\geq O(n)dW_{t}-O(n)$. Hence, $\tr A_{t}$
is lower bounded by a Gaussian with mean $(1-c't)n$ and variance
$c''n^{2}t$ for some constants $c'$ and $c''$. Using this we have
that
\begin{equation}
\tr A_{t}\gtrsim n\text{ for all }0\leq t\leq c''\label{eq:event_B}
\end{equation}
with probability at least $\frac{8}{10}$ for some universal constant
$c''$.

Next, we let $E=\{x:\norm x\leq\varepsilon n\}$. We can assume the
diameter of $p$ is $2\sqrt{n}$ by truncating the domain, it does
not affect the small ball probability by more than a constant factor.
Let $g_{t}=p_{t}(E)$. Lemma \ref{lem:volume} shows that
\[
d[g_{t}]_{t}\leq(2\sqrt{n})^{2}g_{t}^{2}dt.
\]
Using this, we have that 
\[
d\log g_{t}=\eta_{t}dW_{t}-\frac{1}{2}\frac{1}{g_{t}^{2}}d[g_{t}]_{t}\geq\eta_{t}dW_{t}-(2\sqrt{n})^{2}dt
\]
with $0\leq\eta_{t}\leq2\sqrt{n}\log\frac{1}{\varepsilon}$. Solving
this equation, with probability at least $\frac{9}{10}$, we have
\begin{equation}
g_{t}\geq e^{-O(nt)}g_{0}.\label{eq:event_C}
\end{equation}

For the distribution $p_{t}$, we note that $p_{t}(x)=e^{-\frac{t}{2}\norm x_{2}^{2}}q_{t}(x)$
for some logconcave distribution $q_{t}$ (Definition \ref{def:A}).
Therefore, $p_{t}$ has subgaussian constant at most $\frac{1}{\sqrt{t}}$.
Theorem \ref{thm:smallpaouris} shows that
\[
\P_{x\sim p_{t}}\left(\norm{x-y}_{2}^{2}\leq\varepsilon\tr A_{t}\right)\leq\varepsilon^{\Omega(t\norm{A_{t}}_{\spe}^{-1}\tr A_{t})}.
\]
Therefore, under the events (\ref{eq:event_A}) and (\ref{eq:event_B}),
we have that 
\[
g_{t}\leq\exp(-\Omega(nt\log\frac{1}{\varepsilon})).
\]
Since the events (\ref{eq:event_A}), (\ref{eq:event_B}) and \ref{eq:event_C}
each happens with probability $\frac{9}{10}$, we have that with probability
at least $\frac{7}{10}$, for with $t=\Omega(k^{-1}n^{-1/k})$, 
\[
g_{0}\leq\exp(O(nt)-\Omega(nt\log\frac{1}{\varepsilon}))\leq\exp(-\Omega(nt\log\frac{1}{\varepsilon})).
\]
This completes the proof.
\end{proof}

\section{\label{sec:tight}Optimality of the bounds}
\begin{lem}
\label{lem:lower_bound_logsob}For any $\frac{n}{2}\geq D\geq2\sqrt{n}$,
there exists an isotropic logconcave distribution with diameter $D$
such that its log-Cheeger constant is $O(1/\sqrt{D})$ and its log-Sobolev
constant is $O(1/D)$. 
\end{lem}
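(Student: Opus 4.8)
The plan is to build the extremal example as a product of a one-dimensional exponential-type distribution (the source of the bad log-Sobolev behavior) with a standard Gaussian in the remaining coordinates (to fill out isotropy and control the diameter). Concretely, I would fix a small parameter and let the first coordinate $x_1$ follow (up to normalization and centering) a truncated two-sided exponential density $\propto e^{-|x_1|/\sigma}$ on an interval of length $\Theta(D)$, with $\sigma$ chosen so that the variance of $x_1$ equals $1$; since the untruncated exponential has variance $2\sigma^2$, this forces $\sigma = \Theta(1)$, and then truncating at radius $\Theta(D)$ changes the variance only negligibly because $D \gg 1$. For the other $n-1$ coordinates I would take i.i.d.\ standard Gaussians truncated to a ball of radius $\Theta(D)$; since $D \geq 2\sqrt n$, this truncation removes only an exponentially small mass and keeps those marginals within $(1\pm o(1))$ of isotropic. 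The product is logconcave, (nearly) isotropic — one can rescale by $1+o(1)$ factors to make it exactly isotropic without affecting any of the constants — and has diameter $\Theta(D)$; rescaling the $x_1$-interval by a constant makes the diameter exactly $D$.

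The key computation is the upper bound on the log-Cheeger constant $\kappa_p$, which I would get from a single well-chosen test set: take $S = \{x : x_1 \geq a\}$ for $a$ near the right endpoint of the support of $x_1$, so that $p(S)$ is small, say $p(S) = e^{-s}$ for a parameter $s$ up to $\Theta(D)$. Because the $x_1$-marginal is exponential with rate $\Theta(1)$, we have $p(S) \asymp e^{-\Theta(a)}$, so $a \asymp s$; the boundary measure $p(\partial S)$ is just the marginal density at $x_1 = a$, which is $\asymp e^{-\Theta(a)} \asymp p(S)$. Hence
\[
\frac{p(\partial S)}{p(S)\sqrt{\log(1/p(S))}} \asymp \frac{1}{\sqrt{s}},
\]
and choosing $s = \Theta(D)$ (which is legitimate since the support has half-width $\Theta(D)$, so $p(S)$ can be taken as small as $e^{-\Theta(D)}$) gives $\kappa_p = O(1/\sqrt D)$. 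The relation $\rho_p = \Theta(\kappa_p^2)$ quoted in the introduction then immediately yields $\rho_p = O(1/D)$. One should double-check that the minimizing set for $\kappa_p$ cannot do much better — but for an \emph{upper} bound on $\kappa_p$ we only need one set, so this direction is genuinely easy.

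The main obstacle — really the only place requiring care — is verifying that the truncations and the constant-factor rescalings needed to enforce \emph{exact} isotropy and \emph{exact} diameter $D$ do not degrade the $O(1/\sqrt D)$ bound: one must confirm that (i) truncating the exponential coordinate at half-width $\Theta(D)$ perturbs its variance by a factor $1 + e^{-\Theta(D)}$ only, so the rescaling to unit variance is by $1+o(1)$; (ii) truncating the $n-1$ Gaussian coordinates to radius $\Theta(D) \gtrsim \sqrt n$ loses only $e^{-\Theta(n)}$ mass and perturbs their covariance by $1+o(1)$, using a standard Gaussian-norm concentration estimate; and (iii) the interval for $x_1$ still has length $\gtrsim D$ after the unit-variance rescaling, so that test sets with $p(S) = e^{-\Theta(D)}$ remain available. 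All three are routine tail estimates given $2\sqrt n \leq D \leq n/2$, but they are what makes the example legitimate, so I would state them explicitly rather than wave them through. Once they are in place the lemma follows by assembling the test-set computation above with $\rho_p = \Theta(\kappa_p^2)$.
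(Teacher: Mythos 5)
Your core idea is the same as the paper's: exhibit a test set $S$ with $p(S)=e^{-\Theta(D)}$ whose boundary-to-measure ratio is $O(1)$, so that $\kappa_{p}\le p(\partial S)/\bigl(p(S)\sqrt{\log(1/p(S))}\bigr)=O(1/\sqrt{D})$, and then read off $\rho_{p}=O(1/D)$ from $\rho_{p}=\Theta(\kappa_{p}^{2})$. What differs is the extremal density. The paper takes the uniform distribution on the truncated cone
\[
K_{D}=\Bigl\{\,0\le x_{1}\le n,\ \textstyle\sum_{i\ge2}x_{i}^{2}\le\tfrac{1}{n}x_{1}^{2}\,\Bigr\}\cap B\bigl((n,0,\dots,0),\,D\bigr),
\]
and uses a unit-thickness slab $\{t_{0}\le x_{1}\le t_{0}+1\}$ with $t_{0}=n-\sqrt{D^{2}-n}$; the slab has measure $\bigl(\tfrac{t_{0}+1}{n}\bigr)^{n-1}-\bigl(\tfrac{t_{0}}{n}\bigr)^{n-1}\approx e^{-\Theta(D)}$ and expansion at most $2$. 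You instead take a product of a truncated two-sided exponential in $x_{1}$ with $n-1$ truncated Gaussians and cut with a half-space. Both calculations are correct and both sit on the same principle (an effectively one-dimensional exponential tail of length $\Theta(D)$).

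There is, however, one genuine gap in your version that the cone sidesteps, precisely in the step (iii) you flag as ``routine.'' Any isotropic, compactly supported logconcave block in the last $n-1$ coordinates has diameter at least $\approx 2\sqrt{n}$ on its own (the isotropic ball in $\R^{n-1}$ has radius $\sqrt{n+1}$, and this is extremal). Consequently a product $[-L,L]\times\text{(ball of radius }R\text{)}$ with $R\gtrsim\sqrt{n}$ has diameter $2\sqrt{L^{2}+R^{2}}>2\sqrt{n}$ for every $L>0$, so your construction cannot realize diameter $D$ for $D$ near the lower endpoint $2\sqrt{n}$; it only covers $D\ge C\sqrt{n}$ for some $C$ strictly larger than $2$. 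Since the exponent in $e^{-\Theta(D)}$ needs the long coordinate to have length $\Theta(D)$, you cannot absorb the slack by shrinking $L$. This is not fatal to the \emph{idea}, but as written it does not prove the lemma for the stated range. The paper's cone avoids the problem because it is a single convex body, not a product, and its diameter equals $D$ exactly throughout $2\sqrt{n}\le D\le n/2$. A second, smaller point of comparison: the paper reuses the same body $K_{D}$ as the hard instance for Lemma~\ref{lem:lower_bound_ball} (ball-walk mixing from a cold start), where having a convex body with near-constant local conductance is needed; your smooth product density would require a separate construction for that lemma.
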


In fact, we get a nearly lower tight bound on the mixing time of the
ball walk, from an arbitrary start, in terms of the number of proper
steps. Recall that by a proper step we mean steps where the current
point changes, not counting the steps that are discarded due to the
rejection probability. In our lower bound example, the local conductance,
i.e., the probability of a proper step, is at least a constant everywhere
and so the total number of steps in expectation is within a constant
factor of the number of proper steps.
\begin{lem}
\label{lem:lower_bound_ball}For any $\frac{n}{2}\geq D\geq2\sqrt{n}$,
there exists an isotropic convex body with diameter $D$ such that
ball walk mixes in $\widetilde{\Omega}(n^{2}D)$ proper steps.
\end{lem}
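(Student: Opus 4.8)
The plan is to exhibit an explicit isotropic convex body whose ball walk is slow, and the natural candidate is (a rescaled version of) the lower-bound example used for Lemma~\ref{lem:lower_bound_logsob}: a long thin cylinder, i.e.\ a product of a one-dimensional ``exponential-like'' segment of length $\Theta(D)$ with an $(n-1)$-dimensional isotropic body of diameter $O(\sqrt n)$, affinely normalized to be isotropic. Concretely I would take $K$ to be (the isotropic rescaling of) $[0,L]\times B$ where $B\subset\R^{n-1}$ is a ball or cube making the cross-section isotropic, and $L=\Theta(D)$; the key geometric feature is that $K$ has a ``bottleneck'' cut perpendicular to the long axis whose conductance is small. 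The first step is to verify that this body is isotropic with diameter $\Theta(D)$ in the regime $2\sqrt n\le D\le n/2$ — a routine moment computation, since the long direction contributes variance $\Theta(D^2)$ after normalization must equal $1$, forcing the physical length to be $\Theta(D\cdot$(something)$)$; one checks the constants line up so the diameter is exactly order $D$.

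The second and main step is to lower bound the number of proper steps by exhibiting a set $S$ (a slab $\{x_1\le t\}$ near the middle of the long axis, or near where the density is thin) such that the ball walk with step size $\delta=\Theta(1/\sqrt n)$ has conductance $\phi(S)=O(1/(nD))$ across $\partial S$, while $\pi(S)$ is bounded away from $0$ and $1$ — or, to get the extra logarithmic/polynomial factors hidden in $\widetilde\Omega$, a nested family of such slabs capturing multiple scales. The conductance estimate has two ingredients: (i) the $\delta$-boundary of $S$, i.e.\ the set of points within distance $\delta$ of the cut, has stationary measure $O(\delta/D)=O(1/(D\sqrt n))$ because the density along the long axis spreads mass over length $\Theta(D)$; and (ii) from such a boundary point the probability of actually crossing to the other side in one proper step is $\Theta(1)$ (the local conductance is bounded below everywhere, as stated in the remark preceding the lemma, because the cross-section is a nice isotropic body and $\delta=\Theta(1/\sqrt n)$ is small relative to $\sqrt n$). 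Multiplying, $\phi(S)=O(1/(D\sqrt n))\cdot \frac{1}{\pi(S)}$; one then feeds this into the standard conductance lower bound on mixing, $\tau_{\mathrm{mix}}=\Omega(1/\phi^2)=\Omega(D^2 n)$, and since every step from the relevant region is proper with constant probability, the number of \emph{proper} steps is also $\Omega(n D^2)$. I expect a small discrepancy: the crude argument gives $\Omega(nD^2)$, not $\Omega(n^2 D)$, so I would instead take the bottleneck to live on a lower-dimensional scale or choose the cross-section radius so that the relevant ratio is $\delta/\sqrt n$ in the long direction but the number of ``effective'' independent slabs is $\Theta(n)$; the honest version is the KLM-style argument where one tracks the $\chi^2$ or variance decay and shows it cannot drop faster than $1-\Theta(1/(n^2 D))$ per step, giving $\Omega(n^2 D)$ after accounting for the $1/\psi_p^2 = \Theta(D)$ bottleneck on an $n$-step scale.

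The hard part, and where the $\widetilde\Omega$ (rather than clean $\Omega$) enters, is getting the dependence to be exactly $n^2 D$ rather than $nD^2$: one must argue that along the long axis the ball walk behaves like a \emph{one-dimensional} walk with step size $\delta=\Theta(1/\sqrt n)$ on a segment of length $\Theta(D)$, which takes $\Theta(D^2/\delta^2)=\Theta(n D^2)$ steps to traverse — and then show that because the density is not uniform but exponential-type, there is an additional slowdown, or rather that the correct accounting against the worst starting point (a corner of the body) forces the walk to both equilibrate the $n$ ``fast'' coordinates and traverse the slow one, and that the interaction multiplies rather than adds in the relevant regime. I would make this rigorous by the standard two-step argument: (a) an upper bound on conductance giving the $1/\phi^2$ mixing lower bound as above, carefully choosing $S$ so that $\pi(S)=\Theta(1)$ and $\phi(S)=O(1/(n\sqrt{D}))$ — note $(n\sqrt D)^{-2}=(n^2 D)^{-1}$, which is the target — by taking the cut at a location where the one-dimensional marginal density is $\Theta(1/D)$ and the $\delta$-collar therefore has measure $\Theta(\delta/D)=\Theta(1/(D\sqrt n))$, but then \emph{also} noting the crossing probability from a collar point is only $\Theta(\delta/\mathrm{diam}(B))=\Theta(1/n)$ when the cut is not axis-aligned with the short directions, pushing $\phi$ down to $\Theta(1/(D\sqrt n)\cdot 1/\sqrt n)$... — I will need to be careful here, and the cleanest route is probably to reuse verbatim the construction and conductance computation of \cite{KannanLM06} scaled to diameter $D$, upgrading only the constant so that the bound reads $\widetilde\Omega(n^2 D)$ instead of their $\widetilde\Omega(n^2)$ at $D=\Theta(\sqrt n)$; the $\widetilde\Omega$ absorbs the logarithmic factor coming from the fact that conductance-based lower bounds lose a $\log$ against the warm-start $\chi^2$ distance.
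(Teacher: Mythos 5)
Your construction and your lower-bound mechanism both diverge from what is actually needed, and the core quantitative step is wrong.

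First, a cylinder $[0,L]\times B$ cannot be made isotropic with diameter $\Theta(D)$ in the range $D\gg\sqrt n$: if the long axis has unit variance then $L=\Theta(1)$, while if $L=\Theta(D)$ then normalizing to unit variance forces the cross-sectional radius to be $\Theta(D)$ as well, giving total diameter $\Theta(D\sqrt n)$. The cone $K=\{0\le x_1\le n,\ \sum_{i\ge2}x_i^2\le x_1^2/n\}$ used by the paper (truncated to $K_D$) is exactly the trick that avoids this: its $x_1$-marginal has density $\propto x_1^{n-1}$, so despite having length of order $n$ the variance of $x_1$ is $O(1)$ and the body is nearly isotropic, while the truncation makes the diameter $\Theta(D)$.

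Second, you invoke ``$\tau_{\mathrm{mix}}=\Omega(1/\phi^2)$.'' That is not a valid lower bound. The bottleneck-ratio lower bound on mixing/relaxation time is $\Omega(1/\phi)$; the $1/\phi^2$ quantity is only an \emph{upper} bound on relaxation time via Cheeger's inequality. Moreover, for this body and step size $\delta=\Theta(1/\sqrt n)$, any balanced cut (say near the base) has $\delta$-collar of stationary measure $\Theta(1/n)$ and $\Theta(1)$ crossing probability, so $\phi=\Theta(1/n)$ and $1/\phi=\Theta(n)$ --- nowhere near $n^2D$. Conductance fundamentally cannot see the cold-start penalty that is the whole point of this lemma. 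The paper instead starts the walk from the uniform distribution on a slab near the apex of the truncated cone and tracks the drift of the $x_1$-coordinate: each proper step moves $x_1$ by $\pm\Theta(1/n)$, almost always symmetrically, but with a one-sided bias of probability $O(1/\sqrt n)$ arising only for points within distance $O(1/n)$ of the lateral boundary, which (using a monotonicity/unimodality claim for the in-slice distribution) occupy an $O(1/\sqrt n)$ fraction of each slice. The net drift towards the base is therefore $O(1/n^2)$ per step, so covering the $\Omega(D)$ distance to the bulk takes $\Omega(n^2D)$ proper steps. This hitting-time analysis, not a conductance calculation, is what gives the extra factor of $D$.
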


Both theorems are based on the following cone
\[
K=\left\{ x:\:0\le x_{1}\le n,\sum_{i=2}^{n}x_{i}^{2}\le\frac{1}{n}x_{1}^{2}\right\} ,
\]
and the truncated cone
\[
K_{D}=K\cap\left\{ x:\ (x_{1}-n)^{2}+\sum_{i=2}^{n}x_{i}^{2}\leq D^{2}\right\} .
\]
\begin{proof}[Proof of Lemma \ref{lem:lower_bound_logsob}.]
The convex body $K_{D}$ is nearly isotropic and has diameter $D$.
Let
\[
t_{0}=n-\sqrt{D^{2}-n}.
\]
Consider the subset $S=K\cap\left\{ t_{0}\le x_{1}\le t_{0}+1\right\} $.
Note that $S$ is fully contained in $K_{D}$ and that
\begin{align*}
p & =\frac{\vol(S)}{\vol(K_{D})}\approx\frac{\vol(S)}{\vol(K)}\\
 & =\left(\frac{t_{0}+1}{n}\right)^{n-1}-\left(\frac{t_{0}}{n}\right)^{n-1}\\
 & \approx e^{-D}.
\end{align*}
On the other hand, the expansion of $S$ is at most $2$. Therefore,
the log-Cheeger constant $\kappa$ of $K_{D}$ must satisfy 
\[
\kappa\sqrt{\log\frac{1}{p}}\le2
\]
or $\kappa=O\left(D^{-\frac{1}{2}}\right)$ as claimed. It is known
that the log-Sobolev constant $\rho=\Theta(\kappa^{2})$ (see e.g.,
\cite{ledoux1994simple}). This gives the second claim.
\end{proof}
The proof of the lower bound for the ball walk is more involved.
\begin{proof}[Proof of Lemma \ref{lem:lower_bound_ball}.]
Let the starting distribution be the uniform distribution in the
set $S=K_{D}\cap\left\{ t_{0}\le x_{1}\le t_{0}+1\right\} $. For
each point $x$, the local conductance is $\ell(x)=\frac{\vol(K_{D}\cap(x+\delta B))}{\vol(\delta B)}$,
the fraction of the $\delta$-ball around $x$ contained in $K_{D}$. 

The distribution at any time will remain spherically symmetric at
each time step. Each step along the $e_{1}$ direction is approximately
$\pm\frac{\delta}{\sqrt{n}}=\pm\frac{1}{n}$. An unbiased process
that moves $\pm\frac{1}{n}$ along $e_{1}$ in each step would take
$\Omega(n^{2}D^{2})$ steps to converge since the diameter is effectively
$nD$. But there is a slight drift in the positive direction towards
the base. This is because for points near the boundary of the cone
a step away from the base has higher rejection probability compared
to a step towards the base. We will now upper bound the drift and
therefore lower bound the number of steps needed.

More precisely, let $\delta=\frac{1}{\sqrt{n}}$ and $D<0.99n$. On
any slice $A(t)$, we can identify the bias felt by each point $y\in A(t)$.
It is the fraction of the $\delta$-ball around $y$ that is contained
in $K$ but its mirror image through the plane $x_{1}=y_{1}$ is not
contained in $K$. This fraction depends on the distance $d$ of $y$
to the boundary of $A(t)$. The points that feel a significant bias
are contained in the outer annulus of thickness $O\left(\frac{\delta}{\sqrt{n}}\right)=O\left(\frac{1}{n}\right)$.
To bound the fraction of the distribution in this annulus, we observe
that the density within each slice is spherically symmetric. In fact,
a stronger property holds. 
\begin{claim}
The distribution at any time in any cross-section is spherically symmetric
and unimodal. 
\end{claim}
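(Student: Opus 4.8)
The plan is to prove this by induction on the number of ball‑walk steps, writing $\pi_k$ for the distribution after $k$ steps and $T$ for the one‑step transition operator of the ball walk on $K_D$, so that $\pi_{k+1}=T\pi_k$. Write a point of $\R^n$ as $(s,z)$ with $s=x_1\in\R$ and $z=(x_2,\dots,x_n)\in\R^{n-1}$. Since $K_D$ is a solid of revolution about the $x_1$-axis, its cross-section at height $s$ is exactly the origin-centered ball $\{z:|z|\le\rho(s)\}$ for a suitable nonnegative concave $\rho$, and the statement to be proved reads: for every $s$, the map $z\mapsto\pi_k(s,z)$ depends only on $|z|$ and is nonincreasing in $|z|$. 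The base case $k=0$ is immediate, since $\pi_0$ is uniform on $S=K_D\cap\{t_0\le x_1\le t_0+1\}$, whose cross-sections are themselves origin-centered balls, so $\pi_0(s,\cdot)$ is constant on a ball and zero outside.

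For the inductive step, spherical symmetry is automatic: $\mathrm{Unif}(\delta B_n)$ and $K_D$ are both invariant under every rotation fixing the axis, hence $T$ commutes with the action of such rotations on densities, so if $\pi_k$ is rotation-invariant then so is $\pi_{k+1}$. The real content is preservation of radial monotonicity. Assuming $\pi_k(s,\cdot)$ is nonincreasing in $|z|$ for all $s$, fix a height $s'$ and two points $y_1=(s',r_1e)$, $y_2=(s',r_2e)$ of that cross-section with $0\le r_1\le r_2\le\rho(s')$ (for a unit vector $e\in\R^{n-1}$); by the rotational symmetry just proved it suffices to show $\pi_{k+1}(y_1)\ge\pi_{k+1}(y_2)$.

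I would do this by a reflection (polarization) argument. Write one step as the single integral $\pi_{k+1}(y)=\frac1{\vol(\delta B_n)}\int_{\delta B_n}\Pi_k(y,v)\,dv$, where $\Pi_k(y,v)=\pi_k(y+v)$ if $y+v\in K_D$ and $\Pi_k(y,v)=\pi_k(y)$ otherwise; this encodes both the ``jump'' part and the rejection/holding part in one expression, the latter by filling in the complement of $K_D$ with the value of $\pi_k$ at the current point. Let $\hat\sigma$ be the reflection of $\R^n$ across the hyperplane $\hat H$ bisecting $[y_1,y_2]$ (so $\hat\sigma$ swaps $y_1\leftrightarrow y_2$), and let $\hat H^+$ be the side of $\hat H$ containing the axis, which also contains $y_1$. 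Two facts drive the argument: (i) reflecting any point of $K_D$ from the far side $\hat H^-$ to the near side keeps it in $K_D$, i.e. $\hat\sigma(K_D\cap\hat H^-)\subseteq K_D\cap\hat H^+$, because $\hat\sigma$ moves such a point strictly closer to the axis in the $z$-coordinate and cross-sections of $K_D$ are origin-centered balls; and (ii) the local conductance $\ell(y)=\vol(K_D\cap(y+\delta B_n))/\vol(\delta B_n)$, hence the whole rejection mechanism, is invariant under rotations about the axis and in particular under $\hat\sigma$. Reindexing the $y_2$-integral by the isometry conjugating $y_2$'s proposal ball to $y_1$'s, the difference $\pi_{k+1}(y_1)-\pi_{k+1}(y_2)$ becomes an integral over $B(y_1,\delta)$; splitting this ball into the lens $B(y_1,\delta)\cap B(y_2,\delta)$ (which is $\hat\sigma$-invariant) and the crescent $B(y_1,\delta)\setminus B(y_2,\delta)$, the lens contribution is $(\pi_k(y_1)-\pi_k(y_2))\cdot\vol(\mathrm{lens}\setminus K_D)\ge0$ by the inductive hypothesis, and on the crescent one pairs $w$ with $\hat\sigma w$ and uses (i), (ii) and the inductive hypothesis (together with $\hat\sigma w$ being farther from the axis than $w$) to make each paired contribution nonnegative.

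The step I expect to be the main obstacle is the rejection (holding) term. Treated on its own, the held mass $(1-\ell(y))\pi_k(y)$ is a product of a radially nonincreasing factor ($\pi_k$) and a radially nondecreasing factor ($1-\ell$), so it need not be monotone, and a term-by-term comparison fails; this is why one must never separate the jump and the holding contributions. Even inside the bundled reflection argument, the subtle point is the crescent, where a proposal from $y_2$ that is rejected deposits mass back at $y_2$ --- which lies in a different cross-section than the point the proposal would have reached --- so the natural pairing compares $\pi_k$ at two slightly different heights. Controlling this requires either the geometric containment in (i) (which forces such rejected proposals to occur only when $y_2$ is within $\delta$ of the boundary of its own cross-section, so the relevant value of $\pi_k$ is its near-boundary value) or, more robustly, carrying along in the induction a mild Lipschitz-in-$s$ bound on $\pi_k$ --- which holds trivially at $k=0$ and is reproduced by one ball-walk step. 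Once the inductive step is established, the claim follows for all $k$.
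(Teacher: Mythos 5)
You take a genuinely different route from the paper's. The paper compares $p_{t+1}(y)-p_{t+1}(x)$ by a perturbation/extremal argument: it writes the difference as a linear functional of $p_t$, asserts the constraints $p_t(y)\ge p_t(x)$ and $p_t(z)\ge p_t(w)$ for all $z\in B(y)\setminus B(x)$, $w\in B(x)\setminus B(y)$, and observes that under these constraints the functional vanishes at the constant density. You instead fold the jump and holding contributions into a single integrand over $\delta B$ and reflect across the bisecting hyperplane. The two are cousins --- the paper's second constraint is essentially a disguised reflection inequality --- but your version makes the pairing explicit and correctly refuses to treat the holding term separately.

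That said, the gap you flag on the crescent is genuine and not a technicality, and it afflicts the paper's argument as well. When $w\in K_D$ but $\hat\sigma w\notin K_D$, your pairing requires $\pi_k(w)\ge\pi_k(y_2)$; yet $w$ and $y_2$ sit in different cross-sections (heights differ by up to $\delta$), and per-slice radial monotonicity gives no information across slices. Concretely, take $\pi_k=\pi_0=c\cdot 1_S$ and a slice at height $s'\in(t_0,t_0+\delta)$: the crescent then contains points $w$ with $w_1<t_0$ and $\pi_0(w)=0<c=\pi_0(y_2)$, and a direct computation gives $\pi_1(y)=c\bigl(1-\vol(B(y,\delta)\cap K_D\setminus S)/\vol(\delta B)\bigr)$, which on such a slice is radially \emph{nondecreasing}, so the conclusion of the step fails there. (The paper's asserted constraint $p_t(z)\ge p_t(w)$ also fails for this $\pi_0$, since $z$ can lie below the slab with $\pi_0(z)=0$ while $w\in S$ has $\pi_0(w)=c$.) Per-slice unimodality is therefore not by itself a preserved invariant; one must carry a strengthened invariant controlling the height-variation of $\pi_k$ --- your ``Lipschitz in $s$'' idea, or a suitable cross-slice monotonicity --- show it holds initially (at least away from the slab edges), and show one ball-walk step preserves it. As written, you only gesture at this, so the inductive step is incomplete.
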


Suppose the claim is true. Then, since the radius of any slice for
$t\ge t_{0}=n-\sqrt{D^{2}-n}$ is $\Omega(\sqrt{n})$, this is at
most $1-(1-O(1/n^{1.5}))^{n}=O(1/\sqrt{n})$ fraction of the slice.
Each point in the annulus has a probability of $O(1/\sqrt{n})$ of
making a move to the base with no symmetric move away. Thus, effectively,
the process can be viewed as a biased random walk on an interval of
length $D\ge C_{1}\sqrt{n}$ with the projection of each step along
$e_{1}$ is balanced with probability $1-\Omega\left(\frac{1}{\sqrt{n}}\right)$
and is $O\left(\frac{1}{n}\right)$ biased towards the base with probability
$O\left(\frac{1}{\sqrt{n}}\cdot\frac{1}{\sqrt{n}}\right)$. Thus the
drift is $+O\left(\frac{1}{n^{2}}\right)$ and it takes $\Omega\left(n^{2}D\right)$
steps to cover the $\Omega(D)$ distance to get within distance $1$
of the base along $e_{1}$. This is necessary to have total variation
distance less than (say) $0.1$ from the target speedy distribution
over the body. 
\end{proof}
\begin{figure}
\begin{centering}
\includegraphics[width=3in]{cone}
\par\end{centering}
\centering{}\caption{The lower bound construction}
\end{figure}
\begin{proof}
W now prove the claim. We need to argue that starting at the distribution
at time $t$ in each slice is unimodal, i.e., radially monotonic nonincreasing
going out from the center. Consider two points $x,y$ on the same
slice with $x$ closer to the boundary. WLOG we can assume they are
on the same radial line from the center. Also,$\ell(x)\le\ell(y)$.
Let $B(x)=K_{D}\cap(x+\delta B)$. Every point $z\in B(y)\setminus B(x)$
has $\ell(z)\ge\ell(w)$ for any point in $w\in B(x)\setminus B(y)$.
Suppose the current distribution is $p_{t}$, which is a radially
monotonic nonincreasing function. Then, 
\begin{align*}
p_{t+1}(y)-p_{t+1}(x) & =\int_{z\in B(y)}\frac{p_{t}(z)}{\vol(\delta B)}dz+(1-\ell(y))p_{t}(y)-\int_{z\in B(x)}\frac{p_{t}(z)}{\vol(\delta B)}dz-(1-\ell(x))p_{t}(x)
\end{align*}

The above expression is minimized by choosing $p_{t}$ to be as uniform
as possible (under the constraint of monotonicity, i.e., $p_{t}(y)\ge p_{t}(x)$
and $\forall z\in B(y)\setminus B(x),w\in B(x)\setminus B(y),\,p_{t}(z)\ge p_{t}(w)$)
in particular we can set $p_{t}(z)=p_{t}$ to be constant in $B(x)\cup B(y)$.
Then,
\begin{align*}
p_{t+1}(y)-p_{t+1}(x) & \ge p_{t}\ell(y)+(1-\ell(y))p_{t}-p_{t}\ell(x)-(1-\ell(x))p_{t}\\
 & =0.
\end{align*}

Thus, the new density maintains monotonicity as claimed. 
\end{proof}

\begin{acknowledgement*}
This work was supported in part by NSF Awards CCF-1563838, CCF-1717349
and CCF-1740551. We thank Ronen Eldan, Ravi Kannan for inspiring discussions,
and Ronen for his wonderful invention of stochastic localization.
Finally, we thank Emanuel Milman for providing the proof for small
ball probability.
\end{acknowledgement*}
\bibliographystyle{plain}
\bibliography{../acg}

\end{document}